\documentclass[a4paper,reqno]{amsart}

\usepackage{amssymb}
\usepackage{latexsym}
\usepackage{amsmath}
\usepackage{amsthm}
\usepackage{euscript}
\usepackage{bbm}
\usepackage{tikz}
\usepackage{changes}

\usepackage{todonotes}
\usepackage{hyperref}

\newcommand{\blue}[1]{{\color{black} #1}}

   \def\cE{{\mathcal E}}   
      
      \def\cL{{\mathcal L}}
      
\def\cP{{\mathcal P}}   \def\cQ{{\mathcal Q}}   
      
\def\cV{{\mathcal V}}

\def\cal H{{\mathcal H}}

\def\R{\mathbb{R}}
\def\C{\mathbb{C}}

\def\N{\mathbb{N}}

\def\ran{{\text{\rm ran\,}}}
\def\dom{{\text{\rm dom\,}}}

\def\phi{\varphi}

\DeclareMathOperator{\Real}{Re}

\DeclareMathOperator{\spann}{span}

\renewcommand{\theta}{\vartheta}

\newcommand{\form}{h}
\newcommand{\from}{\colon}

\newtheorem{theorem}{Theorem}[section]
\newtheorem*{thm*}{Theorem}
\newtheorem{proposition}[theorem]{Proposition}
\newtheorem{corollary}[theorem]{Corollary}
\newtheorem{lemma}[theorem]{Lemma}

\newtheorem{openproblem}[theorem]{Open Problem}

\theoremstyle{definition}
\newtheorem{definition}[theorem]{Definition}
\newtheorem{example}[theorem]{Example}
\newtheorem{hypothesis}[theorem]{Hypothesis}

\newtheorem{remark}[theorem]{Remark}

\numberwithin{equation}{section}

%\title[Spectral Theory for Schr\"odinger Operators on Metric Graphs]{A Survey on Spectral Theory for Schr\"odinger Operators on Metric Graphs: Dependence on the Coupling Conditions}
\title[Spectral theory for Schr\"odinger operators on metric graphs]{Spectral Theory for Schr\"odinger operators on compact metric graphs with $\delta$ and $\delta'$ couplings: a survey}

\author[J.~Rohleder]{Jonathan Rohleder}
\address[J.\ Rohleder]{Matematiska institutionen\\ Stockholms Universitet \\
106 91 Stockholm \\
Sweden}
\email{jonathan.rohleder@math.su.se}

\author[C.~Seifert]{Christian Seifert}
\address[C.\ Seifert]{Technische Universit\"at Hamburg \\ Institut f\"ur Mathematik \\
Am Schwar\-zen\-berg-Campus~3 \\
Geb\"aude E \\
21073 Hamburg \\
Germany}
\email{christian.seifert@tuhh.de}

\date{\today}

\keywords{metric graph, quantum graph, Schr\"odinger operator, eigenvalues, surgery principles}

\subjclass[2020]{Primary: 47D08, 34L40, 35P05; Secondary: 81Q10}

\thanks{This article is based upon work from COST Action 18232 MAT-DYN-NET, supported by COST (European Cooperation in Science and Technology), \url{www.cost.eu}. J.R.\ acknowledges financial support by the Swedish Research Council (VR), grant no.\ 2022-03342. The authors are grateful to James B.\ Kennedy for some useful remarks.}

\begin{document}

\begin{abstract}
Spectral properties of Schr\"odinger operators on compact metric graphs are studied and special emphasis is put on differences in the spectral behavior between different classes of vertex conditions. We survey recent results especially for $\delta$ and $\delta'$ couplings and demonstrate the spectral properties on many examples. Amongst other things, properties of the ground state eigenvalue and eigenfunction and the spectral behavior under various perturbations of the metric graph or the vertex conditions are considered. 
\end{abstract}

\maketitle

\section{Introduction}

In the last three decades, differential operators on metric graphs, so-called quantum graphs, have been studied extensively. Metric graphs yield effective one-dimensional models for networks of quasi-one-dimensional structures such as nano\-tubes or waveguides, see, e.g., the survey in \cite[Section 7.6]{BK13}. Moreover, they provide easily accessible models due to its one-dimensional nature on the edges, whereas metric graphs exhibit a non-trivial behavior due to the coupling in the vertices.

By now a vast body of literature studying especially self-adjoint Schr\"odinger operators on metric graphs exists, see e.g.\ \cite{BK13,KS1999,kps06,kps08,Kuchment2004,kuc08,Mugnolo2014,P12} for a non-exhaustive list, but also other types of operators such as first order operators \cite{BolteHarrison2003,SchubertSeifertVoigtWaurick2015,W2012}, operators related to diffusion processes on metric graphs \cite{HM13,kkvw09,KMN2022,SeifertVoigt2011}, as well as higher-order operators have been investigated as well, e.g.\ in \cite{BGM2021,BE22,GM2020,GM2020a,KM21,MNS2018,Seifert2018}. Furthermore, descriptions of certain non-self-adjoint operators on metric graphs and spectral estimates for them appear in the literature \cite{BLLR18,HKS15}.%\todo{\cite{BLLR18} ergänzt}
Besides these continuum models on metric graphs, difference operators acting on the vertices of discrete graphs have a long history, and there is an intimate relation between operators on metric graphs and operators on discrete graphs, see \cite{B85,C97,EKMN18,KN2021,KN2023+}, as well as \cite{KLW2021} for an extensive overview on operators on discrete graphs.

In this article our focus is on Schrödinger operators
\begin{align*}
 - \frac{\textup{d}^2}{\textup{d} x^2} + q
\end{align*}
acting on the edges of a metric graph, equipped with a real-valued potential $q$ and suitable vertex conditions; cf.\ Section \ref{sec:metric-graphs} for details. We will admit general self-adjoint vertex conditions, but often we focus particularly on $\delta$ and $\delta'$ vertex conditions; the most prominent continuity-Kirchhoff (also called standard or Neumann) vertex conditions are a special case of the first-mentioned class, and their dual counterparts, anti-Kirchhoff conditions, belong to the second class.

Many properties of the non-trivial behavior of Schrödinger operators on metric graphs can be investigated in terms of spectral theory, which has been studied in recent years in various perspectives such as spectral estimates, properties of eigenfunctions, inverse problems or questions of isospectrality, see \cite{AB21,ABB18,BSS2022,BKKM19,BerkolaikoLatushkinSukhtaiev2019,BK2022,BCJ21,EKMN18,HM18,HM20,HKMP21,KL20,KR21,KN19,KN21,KR20,MP23,MR21,P21,PT21} for a few of the most recent developments.%\todo{\cite{BSS2022,BK2022} ergaenzt}

In recent years, the effect of geometric manipulations of the metric graph onto the eigenvalues of Schrödinger operators have received much attention. This is mainly due to the fact that such so-called surgery principles have turned out to be powerful tools for obtaining eigenvalue bounds and prove isoperimetric inequalities \cite{BL17,BKKM17,BKKM19,BCJ21,BHY23,KKMM16,KMN13,KN14,KS19,MP20,MR21,R17,R22}. 

In this survey we review different aspects of the spectral theory for self-adjoint Schrödinger operators on metric graphs. While some of the results presented here are valid for general self-adjoint vertex conditions, in many cases we put emphasis on the comparison between $\delta$ and $\delta'$ vertex conditions. Among other things, we focus on the effect of surgery principles. These may be divided basically into two groups: on the one hand manipulations which change the total length of the graph such as 
\begin{itemize}
 \item increasing the length of one or all edges,
 \item adding a new edge or attaching another graph,
 \item inserting another graph into a vertex,
 \item shrinking some edge lengths to zero.
\end{itemize}
And on the other hand surgical operations which preserve the total length of the graph, for instance
\begin{itemize}
 \item changing the coupling condition or its strength at a vertex,
 \item joining vertices,
 \item unfolding parallel or pendant edges.
\end{itemize}
In addition, we study properties of the ground state eigenvalue and the corresponding eigenfunction, which especially are of some use to the above-named surgery principles. Moreover, we discuss bounds for the lowest eigenvalue of the Laplacian with $\delta$ and $\delta'$ vertex conditions, where some of the above surgery principles can be of use. 

This article has mostly the character of a survey. Many of the results come from recent literature in this or a similar form, though we in some cases provide slight generalizations and modified proofs. However, we complement these results by some modest additions such as Theorem \ref{thm:insertDelta'} or Theorem \ref{thm:laaangweilig} and a bunch of illuminating examples and counterexamples. In addition, we formulate some open questions.

Let us outline the content of this article. In Section \ref{sec:metric-graphs} we describe the setup of metric graphs and Schr\"odinger operators on them. Further, we collect the basic facts on spectral theory we need. In Section \ref{sec:groundState} we focus on the ground state eigenvalue and eigenfunction. The subsequent sections are devoted to surgery principles, and they are divided into Section \ref{sec:surgery-principles_edges} on graph modifications which may change the total length and Section \ref{sec:surgery-principles_vertices} on those which preserve it. In Section \ref{sec:Hadamard} we provide Hadamard-type variational formulas which describe the variation of spectral data (we focus on simple eigenvalues here) under variation of the model data; specifically, we consider variation of the coupling conditions. In the final Section \ref{sec:bounds} we discuss, as an application, some bounds for the lowest eigenvalue for $\delta$ and $\delta'$ vertex conditions.

\section{Schr\"odinger operators on metric graphs}
\label{sec:metric-graphs}

\subsection{Metric graphs}

Let $\Gamma$ be a metric graph constituted by a finite set of vertices $\cV := \cV (\Gamma)$, a finite set of edges $\cE := \cE (\Gamma)$ and a length function $L \colon \cE \to (0, \infty)$. Upon parametrizing each edge $e\in \cE$ by the interval $[0, L(e)] \subseteq \R$ and considering $\Gamma$ as the one-dimensional simplicial complex consisting of the intervals $[0,L(e)]$, $e\in \cE$, with corresponding coupling at the vertices, $\Gamma$ becomes a compact metric space, and we will refer to such $\Gamma$ as a compact metric graph. For each vertex $v\in \cV$ we denote by $\cE_v\subseteq \cE$ the set of edges incident to $v$ and by $\deg (v)$ the degree of $v$. We write $v = o (e)$ or $v = t (e)$ and say that the edge $e$ originates from or terminates at the vertex $v$, respectively, if $v$ corresponds to the endpoint $0$ respectively $L (e)$ according to our parametrization. 
% Moreover, we set
% \begin{align*}
%  \sigma_e (v) = \begin{cases} 1, & v = o (e), \\ -1, & v = t (e). \end{cases}
% \end{align*}
We will denote by 
\begin{align*}
 L^2 (\Gamma):=\bigoplus_{e\in \cE} L^2(0,L(e))
\end{align*}
the usual $L^2$ space on $\Gamma$, and by 
\begin{align*}
 \widetilde{H}^k(\Gamma):=\bigoplus_{e\in \cE} H^k(0,L(e))
\end{align*}
the $L^2$-based Sobolev space of oder $k\in\N$. For $f\in L^2(\Gamma)$ and $e\in \cE$ we write $f_e$ for the restriction of $f$ to the edge $e\in \cE$ (i.e.\ the interval $(0,L(e))$ corresponding to $e$).

\subsection{Schr\"odinger operators on metric graphs}

In this paper we focus on self-adjoint Schr\"odinger operators in $L^2 (\Gamma)$ subject to self-adjoint coupling conditions on the vertices; that is, we consider Schr\"odinger operators in $L^2 (\Gamma)$ acting as 
\begin{align}\label{eq:Schroedinger}
 (\cL f)_e := - f_e'' + q_e f_e, \quad e \in \cE,
\end{align}
with a real-valued potential $q=(q_e)_{e\in \cE}$, where we assume that $q \in L^\infty(\Gamma)$. This assumption is made for reasons of simplicity; many of the results discussed in this survey extend naturally to larger classes of potentials such as $L^1 (\Gamma)$.

Next we specify vertex conditions with which $\cL$ will constitute a self-adjoint operator. For $v \in \cV$ let $\{e_1, \dots, e_l\}:=\{e\in \cE_v:\; v = o (e)\}$ and $\{e_{l + 1}, \dots e_{m} \}:=\{e\in \cE_v:\; v = t (e)\}$ be enumerations of the sets of edges originating from and terminating at $v$, respectively. Note that, in the special case that an edge $e$ is a loop, i.e.\ $o (e) = t (e)$, $e$ will belong to both sets. For $f \in \widetilde{H}^1 (\Gamma)$ we write
\begin{align*}%\label{eq:vectors}
 F (v) := \begin{pmatrix} f_{e_1} (0) \\ \vdots \\ f_{e_l} (0) \\ f_{e_{l + 1}} (L (e_{l + 1})) \\ \vdots \\ f_{e_{m}} (L (e_{m})) \end{pmatrix}
\end{align*}
for the collection of boundary values of the restriction of $f$ to the adjacent edges at $v$. Moreover, for $f\in \widetilde{H}^2(\Gamma)$ we write
\begin{align*}F' (v) := \begin{pmatrix} f'_{e_1} (0) \\ \vdots \\ f'_{e_l} (0) \\ - f'_{e_{l + 1}} (L (e_{l + 1})) \\ \vdots \\ - f'_{e_{m}} (L (e_{m})) \end{pmatrix},
\end{align*}
for the collection of derivatives of $f$ at $v$ in the direction pointing out of $v$ into the edges. The following description of all self-adjoint incarnations of  $\cL$ in $L^2 (\Gamma)$ with local coupling conditions is standard.

\begin{proposition}[{e.g.\ \cite[Theorem~1.4.4]{BK13}}]
\label{prop:SAconditions}
Let $\Gamma$ be a compact metric graph, $q \in L^\infty (\Gamma)$ be real-valued and $\cL$ be the Schr\"odinger differential expression in~\eqref{eq:Schroedinger}. For each vertex $v \in \cV$ let $P_{v, \rm D}, P_{v, \rm N}$ and $P_{v, \rm R}$ be orthogonal projections in $\C^{\deg (v)}$ with mutually orthogonal ranges such that $P_{v, \rm D} + P_{v, \rm N} + P_{v, \rm R} = I$ and let $\Lambda_v$ be a self-adjoint, invertible operator in $\ran P_{v, \rm R}$.%\marginpar{\tiny JR: Wie soll man das eigentlich lesen? Ist gemeint, $\Lambda_v$ is eine hermitesche Matrix in $\C^d$ mit $\ran P_{v, \rm R} \cap \ker \Lambda_v = \{0\}$? Formal sind ja Orthogonalprojektionen Operatoren von einem Raum in denselben.CS: $\ran P_R$ ist ein (abgeschlossener) Unterraum von $\C^d$. Dann $\Lambda_v : \ran P_R\to \ran P_R$ s.a. und invertierbar. Dabei ist es unerheblich, ob man die Einbettung von $\ran P_R$ (als Unterraum) in $\C^d$ nutzt und damit $\Lambda_v:\C^d\to \C^d$ liest; in diesem Fall gilt genau das oben.} 
Then the operator $H$ in $L^2 (\Gamma)$ given by
\begin{align*}
 H f & := \cL f, \\
 \dom H & := \Big\{ f \in \widetilde H^2 (\Gamma) : P_{v, \rm D} F (v) = 0, P_{v, \rm N} F' (v) = 0, \\
 & \qquad \qquad P_{v, \rm R} F' (v) = \Lambda_v P_{v, \rm R} F (v)~\text{for each}~v \in \cV \Big\},
\end{align*}
is self-adjoint (and each self-adjoint realization of $\cL$ in $L^2 (\Gamma)$ subject to local coupling conditions can be written in this form). Furthermore, the closed quadratic form $h$ corresponding to the operator $H$ in the sense of, e.g., \cite[Chapter VI, Theorem 2.1]{Kato} is given by
\begin{align*}
 h (f) & := \int_\Gamma |f'|^2  + \int_\Gamma q |f|^2 + \sum_{v \in \cV} \big\langle \Lambda_v P_{v, \rm R} F (v), P_{v, \rm R} F (v) \big\rangle, \\
 \dom h & := \left\{ f \in \widetilde H^1 (\Gamma) : P_{v, \rm D} F (v) = 0~\text{for each}~v \in \cV \right\}.
\end{align*}
\end{proposition}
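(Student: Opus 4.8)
The plan is to derive the parametrization from the standard extension theory of the underlying minimal operator together with a linear-algebra normal form for the admissible vertex data, and then to identify the form by integration by parts.

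Let $\cL_{\min}$ denote the restriction of $\cL$ to those $f \in \widetilde H^2(\Gamma)$ with $F(v) = F'(v) = 0$ for all $v \in \cV$; it is densely defined and symmetric, and its adjoint $\cL_{\max}$ is $\cL$ with maximal domain $\widetilde H^2(\Gamma)$. Integrating by parts twice on each edge and using the outward-derivative sign convention built into $F'(v)$ gives Green's identity
\[
 \langle \cL f, g \rangle_{L^2(\Gamma)} - \langle f, \cL g \rangle_{L^2(\Gamma)} = \sum_{v \in \cV} \big( \langle F'(v), G(v) \rangle_{\C^{\deg v}} - \langle F(v), G'(v) \rangle_{\C^{\deg v}} \big), \qquad f, g \in \widetilde H^2(\Gamma).
\]
Since on each edge the values and first derivatives at both endpoints may be prescribed independently, the data map $f \mapsto (F(v), F'(v))_{v}$ is onto $\bigoplus_{v} (\C^{\deg v} \oplus \C^{\deg v})$, so $(\bigoplus_v \C^{\deg v}, (F(v))_v, (F'(v))_v)$ is a boundary triple for $\cL_{\max}$. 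By the usual boundary-triple (von Neumann) correspondence, the self-adjoint restrictions of $\cL_{\max}$ with \emph{local}, i.e.\ vertex-decoupled, conditions are precisely the operators with domain $\{ f \in \widetilde H^2(\Gamma) : (F(v), F'(v)) \in \cM_v \text{ for all } v \in \cV \}$, where each $\cM_v \subseteq \C^{\deg v} \oplus \C^{\deg v}$ is Lagrangian, meaning $\dim \cM_v = \deg v$ and $\langle y, x' \rangle = \langle x, y' \rangle$ for all $(x,y), (x',y') \in \cM_v$.

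The one non-routine step is the following linear-algebra lemma: a subspace $\cM \subseteq \C^d \oplus \C^d$ is Lagrangian if and only if there exist orthogonal projections $P_{\rm D}, P_{\rm N}, P_{\rm R}$ in $\C^d$ with pairwise orthogonal ranges and $P_{\rm D} + P_{\rm N} + P_{\rm R} = I$ and a self-adjoint invertible $\Lambda$ in $\ran P_{\rm R}$ with $\cM = \{ (x,y) : P_{\rm D} x = 0, \ P_{\rm N} y = 0, \ P_{\rm R} y = \Lambda P_{\rm R} x \}$. For the ``if'' part, orthogonality of the ranges together with self-adjointness of $\Lambda$ yields $\langle y, x'\rangle = \langle P_{\rm R} y, P_{\rm R} x' \rangle = \langle \Lambda P_{\rm R} x, P_{\rm R} x'\rangle = \langle P_{\rm R} x, \Lambda P_{\rm R} x'\rangle = \langle x, y'\rangle$, and a dimension count gives $\dim \cM = d$. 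For the ``only if'' part, let $V$ be the image of $\cM$ under the projection onto the first factor and put $\ran P_{\rm D} := V^\perp$; the Lagrangian property gives $\{ y : (0,y) \in \cM \} = V^\perp$, so $T x := P_V y$ (for any $y$ with $(x,y) \in \cM$) is a well-defined self-adjoint operator on $V$. Then $\ker T = \{ x : (x,0) \in \cM \} =: \ran P_{\rm N}$; putting $\ran P_{\rm R} := V \ominus \ker T$ and $\Lambda := T|_{\ran P_{\rm R}}$ gives a self-adjoint operator which, by construction, is injective and hence invertible on $\ran P_{\rm R}$. Using $\ran T \subseteq \ran P_{\rm R}$ one checks that every $(x,y) \in \cM$ satisfies the three conditions, and the reverse inclusion then follows since both sides have dimension $d$. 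Applying the lemma to each $\cM_v$ shows that every self-adjoint local realization has the asserted form $H$, and conversely that every admissible choice of $(P_{v,\rm D}, P_{v,\rm N}, P_{v,\rm R}, \Lambda_v)$ yields a Lagrangian $\cM_v$ and hence a self-adjoint $H$.

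It remains to identify the form. One first checks that $h$ with the stated domain is densely defined, bounded below and closed: $\dom h = \{ f \in \widetilde H^1(\Gamma) : P_{v,\rm D} F(v) = 0 \text{ for all } v \}$ is a closed subspace of $\widetilde H^1(\Gamma)$ since the trace maps are bounded, and by the elementary estimate $|f_e(x_0)|^2 \le \eps \| f_e' \|_{L^2(0,L(e))}^2 + C_\eps \| f_e \|_{L^2(0,L(e))}^2$ for $x_0 \in \{0, L(e)\}$, the boundary term and $\int_\Gamma q |f|^2$ are dominated by $\eps \| f' \|_{L^2(\Gamma)}^2 + C_\eps \| f \|_{L^2(\Gamma)}^2$; choosing $\eps$ small shows that $h$ is bounded below and that its form norm is equivalent to the $\widetilde H^1(\Gamma)$ norm on $\dom h$, hence $h$ is closed. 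By Kato's first representation theorem, $h$ corresponds to a unique self-adjoint operator $H_h$. Finally, for $f \in \dom H$ and $g \in \dom h$, integration by parts on each edge gives
\[
 \int_\Gamma f' \overline{g'} = \langle \cL f, g \rangle_{L^2(\Gamma)} - \int_\Gamma q f \overline{g} - \sum_{v \in \cV} \langle F'(v), G(v) \rangle_{\C^{\deg v}};
\]
inserting the vertex conditions satisfied by $f$ and using $P_{v,\rm D} G(v) = 0$ gives $\langle F'(v), G(v) \rangle_{\C^{\deg v}} = \langle \Lambda_v P_{v,\rm R} F(v), P_{v,\rm R} G(v) \rangle$, whence $h(f,g) = \langle H f, g \rangle_{L^2(\Gamma)}$ for all $g \in \dom h$. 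Therefore $H \subseteq H_h$, and since both operators are self-adjoint, $H = H_h$. The genuine obstacle in all this is the normal-form lemma; the rest is bookkeeping with Green's identity, a one-dimensional trace inequality and the representation theorem.
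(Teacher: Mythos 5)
Your argument is correct, and it is essentially the standard proof of this result: the paper itself gives no proof but cites \cite[Theorem~1.4.4]{BK13}, whose argument likewise runs through Green's identity, the correspondence between local self-adjoint realizations and Lagrangian planes in the vertex boundary spaces, the normal form $(P_{\rm D},P_{\rm N},P_{\rm R},\Lambda)$ for such planes, and the first representation theorem for the form. Your treatment of the normal-form lemma (in particular the identification $\{y:(0,y)\in\cM\}=V^\perp$ via maximal isotropy and the verification that $T|_{\ran P_{\rm R}}$ is invertible) and the sign bookkeeping in the integration by parts are both sound, so there is nothing to correct.
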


By a standard compact embedding argument the spectrum of the Hamiltonian $H$ on the compact metric graph $\Gamma$ is always purely discrete and bounded from below, see, e.g.,~\cite[Corollary 10 and Theorem 18]{Kuchment2004}. We denote by
\begin{align*}
 \lambda_1 (H) \leq \lambda_2 (H) \leq \dots 
\end{align*}
the eigenvalues of $H$ in non-decreasing order and counted with multiplicities. If $q\geq 0$ and $\Lambda_v$ is non-negative for each vertex $v$ then all eigenvalues are non-negative. We will frequently make use of the fact that these eigenvalues may be expressed via the min-max principle
\begin{align*}
  \lambda_k (H) = \min_{\substack{F \subseteq \dom h \\ \dim F = k}} \max_{\substack{f \in F \\ f \neq 0}} \frac{h (f)}{\int_\Gamma |f|^2}
\end{align*}
for $k = 1, 2, \dots$.

We will put special emphasis on so-called $\delta$ and $\delta'$ coupling conditions and their special variants, namely continuity-Kirchhoff and anti-Kirchhoff conditions. To specify those within the framework of Proposition \ref{prop:SAconditions}, for $d\in\N$ we denote by
\begin{align*}
 \cP:=\cP_d = \begin{pmatrix} \frac{1}{d} & \dots & \frac{1}{d} \\ \vdots & & \vdots \\ \frac{1}{d} & \dots & \frac{1}{d} \end{pmatrix} \quad \text{and} \quad \cQ:=\cQ_d = \begin{pmatrix} \frac{d - 1}{d} & - \frac{1}{d} & \dots & - \frac{1}{d} \\ - \frac{1}{d} & \ddots & \ddots & \vdots \\ \vdots & \ddots & \ddots & - \frac{1}{d} \\ - \frac{1}{d} & \dots & - \frac{1}{d} & \frac{d - 1}{d} \end{pmatrix}
\end{align*}
the orthogonal projections onto $\spann \{ (1, 1, \dots, 1)^\top \}$ and onto its orthogonal complement $(\spann \{ (1, 1, \dots, 1)^\top \})^\perp$, respectively.

\begin{definition}\label{def:Cond}
    Let $v\in \cV$ and $d:=\deg(v)>0$. 
    Then the vertex conditions at $v$ are called 
    \begin{enumerate}
     \item[(a)] \emph{continuity-Kirchhoff conditions} if $P_{v,\rm D} = \cQ$, $P_{v,\rm N} = \cP$ and $P_{v,\rm R} = 0$;
     \item[(b)] \emph{$\delta$ coupling conditions} with \emph{strength} $\alpha_v\in\R$ if $P_{v,\rm D} = \cQ$, $P_{v,\rm N} = 0$, $P_{v,\rm R} = \cP$ and $\Lambda_v$ is the multiplication by $\frac{\alpha_v}{d}$;
    \item[(c)] \emph{anti-Kirchhoff conditions} if $P_{v,\rm N} = \cQ$, $P_{v,\rm D} = \cP$ and $P_{v,\rm R} = 0$;
     \item[(d)] \emph{$\delta'$ coupling conditions} with \emph{strength} $\beta_v\in\R\setminus\{0\}$ if $P_{v,\rm N} = \cQ$, $P_{v,\rm D} = 0$, $P_{v,\rm R} = \cP$ and $\Lambda_v$ is the multiplication by $\frac{d}{\beta_v}$.   
    \end{enumerate}
\end{definition}

The vertex conditions given in Definition \ref{def:Cond} can be written more explicitly. In fact, $f$ satisfies $\delta$ vertex conditions with strength $\alpha_v$ at a vertex $v$ if and only if $F(v)$ is equal to a constant vector, whose value we denote $f (v)$, and 
  \[\sum_{j=1}^{\deg (v)} F_j'(v) = \alpha_v f(v).   
  \]
In particular, continuity-Kirchhoff coupling conditions equal $\delta$ coupling conditions with strength $\alpha_v=0$. On the other hand, formally setting $\alpha_v=\infty$ results in a Dirichlet condition at $v$, i.e. $f(v)=0$.
  
Furthermore, $f$ satisfies $\delta'$ coupling conditions with strength $\beta_v$ at $v$ if and only if $F'(v)$ is equal to a constant vector, whose value we denote $f'(v)$, and 
  \[\sum_{j=1}^d F_j(v) = \beta_v f'(v).
  \]
In particular, an anti-Kirchhoff coupling condition can be interpreted as a $\delta'$ coupling condition with strength $\beta_v=0$. Formally setting $\beta_v=\infty$ results in a Neumann condition at $v$, i.e. $f'(v)=0$.

For $\delta$  and $\delta'$ couplings, the vertex term in the quadratic form $h$ in Proposition~\ref{prop:SAconditions} looks as follows.

\begin{lemma}[{e.g.\ \cite[Lemma 2.5]{RS20}}]
\label{lem:forms}
Let $\Gamma$ be a compact metric graph, let $H$ be a self-adjoint Schr\"odinger operator in $L^2 (\Gamma)$ as in Proposition~\ref{prop:SAconditions}, and let $h$ be the corresponding quadratic form. Furthermore, let $v \in \cV$ and let the vertex conditions for $H$ at $v$ be given in terms of $P_{v, \rm D}, P_{v, \rm N}, P_{v, \rm R}$ and $\Lambda_v$. 
% Assume that Hypothesis~\ref{hyp:PQ} holds for~$v$. 
Then the following assertions hold for each $f \in \dom h$.
\begin{enumerate}
  \item If a continuity-Kirchhoff condition is imposed at $v$, then $f$ is continuous at $v$ and
		\begin{align*}
	 \big\langle \Lambda_v P_{v, \rm R} F (v), P_{v, \rm R} F (v) \big\rangle
 = 0.
\end{align*}
	\item If a $\delta$ coupling condition with strength $\alpha_v$ is imposed at $v$, then $f$ is continuous at $v$ and
	\begin{align*}
	 \big\langle \Lambda_v P_{v, \rm R} F (v), P_{v, \rm R} F (v) \big\rangle
 = \alpha_v |f (v)|^2.
\end{align*}
  \item If an anti-Kirchhoff condition is imposed at $v$, then $\sum\limits_{j = 1}^{\deg (v)} F_j (v) = 0$ and
		\begin{align*}
	 \big\langle \Lambda_v P_{v, \rm R} F (v), P_{v, \rm R} F (v) \big\rangle
 = 0.
\end{align*}
	\item If a $\delta'$ coupling condition with strength $\beta_v$ is imposed at $v$, then $f$ does not satisfy any vertex conditions at $v$ and
	\begin{align*}
	 \big\langle \Lambda_v P_{v, \rm R} F (v), P_{v, \rm R} F (v) \big\rangle
 = \frac{1}{\beta_v} \bigg| \sum_{j = 1}^{\deg (v)} F_j (v) \bigg|^2.
\end{align*}
\end{enumerate}
\end{lemma}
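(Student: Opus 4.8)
The plan is to treat each of the four cases by substituting the projections $P_{v, \rm D}, P_{v, \rm N}, P_{v, \rm R}$ and the operator $\Lambda_v$ from Definition~\ref{def:Cond} into the description of $\dom h$ and of the vertex term $\langle \Lambda_v P_{v, \rm R} F (v), P_{v, \rm R} F (v) \rangle$ supplied by Proposition~\ref{prop:SAconditions}, then reading off what the constraint $P_{v, \rm D} F(v) = 0$ says and computing the quadratic expression explicitly. Throughout I would write $d = \deg(v)$ and $\mathbf 1 = (1, \dots, 1)^\top \in \C^d$, and record the two elementary facts that make everything work: since $\cP = \cP_d$ is the orthogonal projection onto $\spann\{\mathbf 1\}$ we have $\cP x = \frac1d\big(\sum_{j=1}^d x_j\big)\mathbf 1$ and hence $\|\cP x\|^2 = \frac1d\big|\sum_{j=1}^d x_j\big|^2$, while $\cQ = \cQ_d = I - \cP$ is the projection onto $\{\mathbf 1\}^\perp$.

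For cases (a) and (c), where $P_{v, \rm R} = 0$, the vertex term vanishes identically, so only the Dirichlet-type constraint has to be interpreted. In (a) this constraint is $\cQ F(v) = 0$, i.e.\ $F(v) \in \ran\cP = \spann\{\mathbf 1\}$, which is precisely continuity of $f$ at $v$; in (c) it is $\cP F(v) = 0$, i.e.\ $F(v) \in \{\mathbf 1\}^\perp$, which is the statement $\sum_{j=1}^d F_j(v) = 0$.

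For case (b), the constraint $\cQ F(v) = 0$ again forces $F(v) = f(v)\mathbf 1$ for a scalar $f(v)$ (continuity), so $P_{v, \rm R} F(v) = \cP F(v) = F(v)$; since $\Lambda_v$ is multiplication by $\alpha_v/d$ on $\ran\cP$ this gives $\langle \Lambda_v \cP F(v), \cP F(v)\rangle = \frac{\alpha_v}{d}\|F(v)\|^2 = \frac{\alpha_v}{d}\cdot d\,|f(v)|^2 = \alpha_v |f(v)|^2$. For case (d), $P_{v, \rm D} = 0$ means no constraint is placed on the boundary values, so $f$ satisfies no vertex condition at $v$ at the form level; here $P_{v, \rm R} F(v) = \cP F(v)$ and $\Lambda_v$ is multiplication by $d/\beta_v$ on $\ran\cP$, so $\langle \Lambda_v \cP F(v), \cP F(v)\rangle = \frac{d}{\beta_v}\|\cP F(v)\|^2 = \frac{d}{\beta_v}\cdot\frac1d\big|\sum_{j=1}^d F_j(v)\big|^2 = \frac1{\beta_v}\big|\sum_{j=1}^d F_j(v)\big|^2$.

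There is no genuine obstacle here: the statement is pure bookkeeping with the projections $\cP_d$, $\cQ_d$. The only point demanding a little care is the normalization $\|\cP x\|^2 = \frac1d\big|\sum_{j} x_j\big|^2$, which is exactly what absorbs the factors $d$ and $1/d$ appearing in the strengths in Definition~\ref{def:Cond} and makes the final expressions come out with the clean constants $\alpha_v$ and $1/\beta_v$. One can in fact dispatch all four cases simultaneously by observing that $\langle \Lambda_v P_{v, \rm R} F(v), P_{v, \rm R} F(v)\rangle$ depends only on the component of $F(v)$ in $\ran P_{v, \rm R}$, which in every case is either $0$ or $\cP F(v)$, and then evaluating.
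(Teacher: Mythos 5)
Your computation is correct in all four cases, and the normalizations $\cP x = \frac1d\big(\sum_j x_j\big)\mathbf 1$ and $\|\cP x\|^2 = \frac1d\big|\sum_j x_j\big|^2$ are exactly the points where the factors $d$ cancel to yield $\alpha_v$ and $1/\beta_v$. The paper itself gives no proof of this lemma (it only cites \cite[Lemma~2.5]{RS20}), and your direct substitution of Definition~\ref{def:Cond} into the form expression of Proposition~\ref{prop:SAconditions} is precisely the standard verification one would supply.
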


\section{Properties of the ground state}
\label{sec:groundState}

In this section we review properties of the first eigenvalue and the corresponding eigenfunctions in case that the same coupling conditions are imposed at each vertex, either $\delta$ or $\delta'$. 

\subsection{Positivity of the first eigenfunction}

Here we discuss Perron-Frobenius (or Courant) type properties of quantum graphs with $\delta$ and $\delta'$ vertex conditions. For, e.g., Schrödinger operators on Euclidean domains with suitable boundary conditions, it is well known that the first eigenvalue has multiplicity one and the corresponding eigenfunction can be chosen positive. This turns out to be correct for $\delta$ coupling conditions, but is no longer true for $\delta'$ conditions, as we will see below.

The following theorem corresponds to \cite[Theorem~3.2]{EJ12}, see also \cite[Corollary 1]{K19}; for the convenience of the reader, we provide a proof.

\begin{theorem}\label{thm:courant}
Let $\Gamma$ be a connected compact metric graph, let $q \in L^\infty (\Gamma)$ be real-valued and let $H$ be the Schrödinger operator on $\Gamma$ defined in Proposition \ref{prop:SAconditions} and assume that $\delta$ coupling conditions are imposed at all vertices. Then the eigenspace of $H$ corresponding to the first eigenvalue $\lambda_1 (H)$ is one-dimensional. Moreover, the corresponding eigenfunction does not have any zero in $\Gamma$ and, hence, can be chosen positive.
\end{theorem}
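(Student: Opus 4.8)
The plan is to run the standard variational argument for Perron--Frobenius type statements, adapted to the metric graph setting with $\delta$ couplings. The key observation that makes $\delta$ couplings work (and $\delta'$ fail) is that the form domain $\dom h$ consists of functions that are \emph{continuous} at each vertex (by Lemma \ref{lem:forms}(2)), hence $\dom h \subseteq H^1(\Gamma) \subseteq C(\Gamma)$ in the sense of continuous functions on the whole metric space $\Gamma$; and crucially, if $f \in \dom h$ then $|f| \in \dom h$ as well, with
\begin{align*}
 h(|f|) = h(f),
\end{align*}
since $|f|' = \pm f'$ a.e., the potential term and the $L^2$-norm only see $|f|$, and the vertex terms $\alpha_v |f(v)|^2 = \alpha_v \big||f|(v)\big|^2$ are unchanged. (Here one uses that $|f|$ is again continuous at each vertex, so it indeed satisfies the $\delta$-type membership condition for $\dom h$.)

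\textbf{Key steps.} First I would take a real-valued eigenfunction $f$ for $\lambda_1(H)$ (possible since $q$ and the couplings are real, so $H$ commutes with complex conjugation), and use the min--max principle together with $h(|f|) = h(f)$ and $\||f|\|_{L^2} = \|f\|_{L^2}$ to conclude that $|f|$ is also a minimizer of the Rayleigh quotient, hence also an eigenfunction for $\lambda_1(H)$; so we may assume $f \geq 0$ on $\Gamma$. Second, I would show $f > 0$ everywhere on $\Gamma$ by a unique continuation / maximum principle argument: suppose $f(x_0) = 0$ for some point $x_0$. If $x_0$ lies in the interior of an edge $e$, then $f_e$ solves $-f_e'' + q_e f_e = \lambda_1 f_e$ on the interval with $f_e \geq 0$ and $f_e(x_0) = 0$, so $x_0$ is an interior minimum, forcing $f_e'(x_0) = 0$, and then by uniqueness of solutions to the (second-order linear) ODE initial value problem $f_e \equiv 0$ on all of $e$. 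If instead $x_0$ is a vertex $v$, then by continuity $f_e(v) = 0$ for every edge $e \in \cE_v$; one shows the outgoing derivative $f_e'(v) = 0$ for each such $e$ as well: since $f \geq 0$ and vanishes at $v$, each one-sided derivative $F_j'(v) \geq 0$, but the $\delta$ condition gives $\sum_j F_j'(v) = \alpha_v f(v) = 0$, so every term vanishes; again the ODE uniqueness theorem forces $f \equiv 0$ on each incident edge. In either case the zero set of $f$ is seen to be open (as well as closed), and connectedness of $\Gamma$ then yields $f \equiv 0$ on $\Gamma$, a contradiction. Hence $f > 0$. Third, for the one-dimensionality: any eigenfunction $g$ for $\lambda_1(H)$ may be taken real, and then $g^+ = \max(g,0)$ and $g^- = \max(-g,0)$ both lie in $\dom h$; a short computation (using again that the vertex terms split as $\alpha_v|g(v)|^2 = \alpha_v|g^+(v)|^2 + \alpha_v |g^-(v)|^2$ because $g^+$ and $g^-$ have disjoint supports and are continuous) shows $h(g^\pm) \leq \lambda_1(H)\|g^\pm\|^2$, so each nonzero one of $g^\pm$ is a nonnegative eigenfunction, hence by Step 2 strictly positive; since $g^+$ and $g^-$ cannot both be strictly positive, one of them vanishes, i.e.\ $g$ has a sign. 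Two eigenfunctions of one sign cannot be $L^2$-orthogonal unless one is zero, so the eigenspace is one-dimensional.

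\textbf{Main obstacle.} The part requiring the most care is the vertex case of Step 2: establishing that a nonnegative first eigenfunction vanishing at a vertex $v$ must have all outgoing derivatives zero there, and then invoking ODE uniqueness edge-by-edge to propagate the zero. This is where the specific structure of the $\delta$ condition ($\sum_j F_j'(v) = \alpha_v f(v)$, combined with the sign constraint $F_j'(v) \geq 0$) is essential, and it is precisely the step that breaks for $\delta'$ couplings, where continuity fails and $|f|$ need not lie in the form domain with the same form value. A secondary technical point is justifying $|f| \in \dom h$ and $h(|f|) = h(f)$ rigorously (standard for $H^1$ functions via the chain rule for Sobolev functions, plus the continuity preservation of $|\cdot|$), but this is routine.
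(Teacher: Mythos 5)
Your proposal is correct and its core follows the same route as the paper's proof: pass to $g=|f|$, use $h(|f|)=h(f)$ (valid precisely because $\delta$ couplings only see $|f(v)|$ and keep continuity) to see that $|f|$ is again a ground state, and then propagate a hypothetical zero through the graph using the sign constraint $F_j'(v)\ge 0$ together with $\sum_j F_j'(v)=\alpha_v f(v)=0$ and ODE uniqueness on each edge; the paper handles interior zeros by the same mechanism, simply viewing an interior point as an artificial degree-two vertex with $\delta$ strength zero. The only genuine divergence is the simplicity argument: the paper concludes directly from the no-zero property by noting that two independent eigenfunctions $f,k$ would produce the nontrivial eigenfunction $f-\frac{f(x_0)}{k(x_0)}k$ vanishing at $x_0$, whereas you use the $g^{\pm}$ decomposition plus the fact that two sign-definite functions cannot be orthogonal. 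Both are standard and correct; the paper's version is a bit shorter since it reuses the no-zero property verbatim, while yours additionally establishes that every real ground state has a sign (note only that min--max gives $h(g^{\pm})\ge\lambda_1\|g^{\pm}\|^2$, and it is the additivity $h(g)=h(g^+)+h(g^-)$ that upgrades this to equality).
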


\begin{proof}
Let $f \in \ker (H - \lambda_1 (H))$, $f \neq 0$, and let $g := |f|$. Then $|g'| \leq |f'|$ and therefore
\begin{align*}
 h (g) & = \int_\Gamma |g'|^2 + \int_\Gamma q |g|^2 + \sum_{v \in \cV} \alpha_v |g(v)|^2 \\
 & \leq \int_\Gamma |f'|^2 + \int_\Gamma q |f|^2 + \sum_{v \in \cV} \alpha_v |f (v)|^2 = h (f),
\end{align*}
and as $f$ minimizes the Rayleigh quotient, the same is true for $g$. Hence $g$ is a non-trivial element of $\ker (H - \lambda_1 (H))$; in particular, $g$ satisfies $\delta$ coupling conditions. Note that $g$ does not vanish on any vertex. Indeed, $g (0) = 0$ and the coupling conditions imply 
\begin{align*}
 \sum_{j = 1}^{\deg (v)} G_j' (v) = 0,
\end{align*}
and since $g$ vanishes at $v$ and is nonnegative everywhere, it follows $g_e' (v) =  0$ for each edge $e$ incident to $v$. In particular, $g$ vanishes on each edge incident to $v$, and it follows $g (w) = 0$ for each vertex $w$ which is connected to $v$ by an edge. Successively the same argument yields $g = 0$ identically on $\Gamma$ as $\Gamma$ is connected, a contradiction. As every non-vertex point on $\Gamma$ can be interpreted as a vertex of degree two equipped with $\delta$ conditions of strength zero, it follows that $g$ has no zero on $\Gamma$, and this implies that the arbitrarily chosen eigenfunction $f$ has no zero. 

Finally, having two linearly independent eigenfunctions $f, k$ would imply that for any chosen $x_0 \in \Gamma$ the function
\begin{align*}
 f - \frac{f (x_0)}{k (x_0)} k
\end{align*}
belongs to $\ker (H - \lambda_1 (H))$, is nontrivial and vanishes at $x_0$, a contradiction. Thus $\ker (H - \lambda_1 (H))$ is one-dimensional.
\end{proof}

The case of $\delta'$ coupling conditions is fundamentally different, as the following example shows. 

% \begin{theorem}
% Theorem~\ref{thm:courant} is valid for $\delta'$ coupling conditions if $\beta_v < 0$ holds for each $v \in V$.
% \end{theorem}
% 
% \begin{proof}
% Let $f$ be an eigenfunction, that is, $f$ minimizes the Rayleigh quotient for $H$. Then with $g := |f|$ we have $g \in \dom h$, $\int_\Gamma |g|^2 = \int_\Gamma |f|^2$, and
% \begin{align*}
%  h (g) & = \int_\Gamma |g'|^2 + \int_\Gamma q |g|^2 \textup{d}x + \sum_{v \in V} \frac{1}{\beta_v} \Big| \sum_{e \in E_v} g_e (v) \Big|^2 \\
%  & = \int_\Gamma |f'|^2 + \int_\Gamma q |f|^2 \textup{d}x + \sum_{v \in V} \frac{1}{\beta_v} \Big| \sum_{e \in E_v} |f_e (v)| \Big|^2 \\
%  & \leq \int_\Gamma |f'|^2 + \int_\Gamma q |f|^2 \textup{d}x + \sum_{v \in V} \frac{1}{\beta_v} \Big| \sum_{e \in E_v} f_e (v) \Big|^2 \\
%  & = h (f)
% \end{align*}
% by the triangle inequality. (Here the sign of the $\beta_v$ was used.) It follows that $g$ minimizes the Rayleigh quotient as well and, hence, is an eigenfunction. Moreover, $g$ vanishing at some vertex $v$ implies $\beta_v g' (v) = 0$, that is, $g' (v) = 0$ and thus $g_e' (v) = 0$ for each $e \in E_v$. The result is again that $g$ vanishes identically on each edge incident to $v$. If $w$ is a vertex connected to $v$ by an edge, it follows $g' (w) = 0$ and, hence $\sum_{e \in E_w} g_e (w) = 0$. But as $g$ is nonnegative, this implies $g_e (w) = 0$ for all $e \in E_w$. Successively we get $g = 0$ identically on $\Gamma$, a contradiction. The remaining proof is identic to the proof of Theorem~\ref{thm:courant}.
% \end{proof}

\begin{example}\label{ex:3star}
Consider a 3-star graph formed of three copies of the interval $[0, 1]$ coupled to each other at their zero endpoints, see Figure \ref{fig:3star}. 
\begin{figure}[htb]
  \centering
  \begin{tikzpicture}
    \draw[fill] (0,0) circle(0.05) node[below]{$0$};
    \draw[fill] (0,2) circle(0.05);
    \draw[fill] ({sqrt(3)},-1) circle(0.05);
    \draw[fill] ({-sqrt(3)},-1) circle(0.05);
    \draw (0,0)--(0,2);
    \draw (0,0)--({sqrt(3)},-1);
    \draw (0,0)--(-{sqrt(3)},-1);
  \end{tikzpicture}
  \caption{The equilateral 3-star graph.}
  \label{fig:3star}
\end{figure}
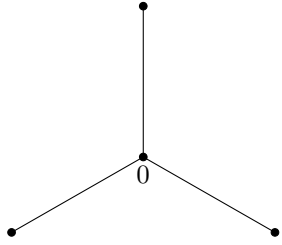
If we impose Neumann boundary conditions at the degree-one vertices (which can be read as $\delta'$ conditions with strength $\infty$) and a $\delta'$ coupling condition with a strength $\beta \geq 0$ at the degree-three vertex, then the corresponding quadratic form for the Laplacian $H$ is non-negative, i.e.\ $\lambda_1 (H) \geq 0$. Moreover, the function $f$ which is constantly $1$ on one of the edges, $- 1$ on another and zero on the remaining edge satisfies all imposed vertex conditions. As $  f'' = 0$ on every edge, we have $f \in \ker (H - \lambda_1 (H)) = \ker H$. But $f$ is positive on one edge and negative on another, that is, there exist ground state eigenfunctions without fixed sign, and they can even be constantly zero on a whole edge. Furthermore, by interchanging the roles of the edges, one sees that the eigenspace is two-dimensional.
\end{example}

For the study of a larger class of vertex conditions that allow for uniqueness and positivity of the ground state eigenfunction we refer the reader to \cite{K19}.

\subsection{Existence of negative eigenvalues}

For a quantum graph with $\delta$ or $\delta'$ vertex conditions, it is not necessarily true that existence of one negative coupling coefficient implies that the lowest eigenvalue is negative; cf.\ Example \ref{ex:noNegEV} below. In the following, sufficient conditions for existence of a negative eigenvalue are provided.  

\begin{theorem}\label{thm:negDelta}
Let $\Gamma$ be a compact metric graph and $H$ a self-adjoint Schrödinger operator on $\Gamma$ as defined in Proposition \ref{prop:SAconditions}. Assume that at each vertex $v \in \cV$ a $\delta$ coupling condition of strength $\alpha_v$ is imposed. If 
\begin{align*}
 \int_\Gamma q  + \sum_{v \in \cV} \alpha_v < 0,
\end{align*}
then $\lambda_1 (H) < 0$. 
\end{theorem}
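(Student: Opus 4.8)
The natural strategy is to use the min-max principle: it suffices to exhibit a single test function $f \in \dom h$ with $h(f) < 0$, since then $\lambda_1(H) \le h(f)/\int_\Gamma |f|^2 < 0$. The obvious candidate is the constant function $f \equiv 1$ on all of $\Gamma$. This function lies in $\widetilde H^1(\Gamma)$, and since $\delta$ coupling conditions impose no Dirichlet part ($P_{v,\mathrm D} = \cQ$ kills only the non-constant directions, and a constant vector of boundary values is automatically in $\ker \cQ = \ran \cP$), we have $f \in \dom h$.

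With this choice, $f' \equiv 0$, so $\int_\Gamma |f'|^2 = 0$; the potential term is $\int_\Gamma q \cdot 1 = \int_\Gamma q$; and by Lemma~\ref{lem:forms}(b), at each vertex $v$ the vertex contribution is $\alpha_v |f(v)|^2 = \alpha_v$ since $f(v) = 1$. Hence
\begin{align*}
 h(f) = 0 + \int_\Gamma q + \sum_{v \in \cV} \alpha_v = \int_\Gamma q + \sum_{v \in \cV} \alpha_v < 0
\end{align*}
by hypothesis. Since $\int_\Gamma |f|^2 = \sum_{e \in \cE} L(e) > 0$, the min-max principle gives $\lambda_1(H) \le h(f)/\int_\Gamma |f|^2 < 0$, which is the claim.

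There is essentially no obstacle here: the only point requiring a moment's care is the verification that the constant function satisfies the $\delta$ coupling conditions at every vertex, but this is immediate from the explicit form of the conditions recorded after Definition~\ref{def:Cond} (constancy of $F(v)$ holds trivially, and $\sum_j F_j'(v) = 0 = \alpha_v f(v)$ since the derivatives vanish — though in the form setting one need only check $f \in \dom h$, which requires only that the Dirichlet part of the boundary values vanishes). No connectedness assumption is needed, consistent with the statement. The $\delta'$ analogue would of course require a different test function, but that is not asked for here.
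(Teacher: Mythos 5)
Your proof is correct and is essentially identical to the paper's: both plug the constant function $f \equiv 1$ into the quadratic form, observe that $h(f) = \int_\Gamma q + \sum_{v \in \cV} \alpha_v < 0$, and conclude $\lambda_1(H) < 0$ from the min-max principle. Your additional verification that the constant function lies in $\dom h$ is a correct (if routine) detail that the paper leaves implicit.
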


\begin{proof}
By plugging the constant function $f = 1$ into the quadratic form we get
\begin{align*}
 h (f) & = \int_\Gamma |f'|^2 + \int_\Gamma q |f|^2  + \sum_{v \in \cV} \alpha_v |f (v)|^2 \\
 & = \int_\Gamma q  + \sum_{v \in \cV} \alpha_v < 0.
\end{align*}
This implies $\lambda_1 (H) < 0$.
\end{proof}

For $\delta'$ couplings, a more localized condition for existence of negative eigenvalues can be given. 

\begin{theorem}\label{thm:negDelta'}
Let $\Gamma$ be a compact metric graph and $H$ a self-adjoint Schrödinger operator on $\Gamma$ as defined in Proposition \ref{prop:SAconditions}. Assume that at each vertex $v \in \cV$ a $\delta'$ coupling condition of strength $\beta_v$ is imposed. If for some edge $e_0$, 
\begin{align*}
 \int_{e_0} q  + \frac{1}{\beta_{o (e_0)}} + \frac{1}{\beta_{t (e_0)}} < 0,
\end{align*}
then $\lambda_1 (H) < 0$. 
\end{theorem}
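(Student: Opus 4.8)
The plan is to exhibit an explicit test function supported essentially on the single edge $e_0$ and apply the min-max principle, exactly as in the proof of Theorem \ref{thm:negDelta} but localized. First I would take $f$ to be the function equal to $1$ on the edge $e_0$ and equal to $0$ on every other edge; one checks that $f \in \widetilde H^1(\Gamma)$ and, crucially, that it lies in $\dom h$. For $\delta'$ coupling conditions no vertex conditions are imposed on elements of the form domain (Lemma \ref{lem:forms}(iv) says $f$ does not satisfy any vertex conditions at $v$ in the $\delta'$ case, i.e. $P_{v,\mathrm D}=0$ for all $v$), so $\dom h = \widetilde H^1(\Gamma)$ and there is nothing to verify beyond edgewise $H^1$-regularity, which is clear since $f$ is constant on each edge.

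Next I would evaluate $h(f)$ using the form expression from Proposition \ref{prop:SAconditions} together with Lemma \ref{lem:forms}(iv). Since $f$ is constant on each edge, $\int_\Gamma |f'|^2 = 0$. The potential term gives $\int_\Gamma q|f|^2 = \int_{e_0} q$. For the vertex terms, at a vertex $v$ not incident to $e_0$ all boundary values $F_j(v)$ vanish, so the contribution is $0$; at the two vertices $o(e_0)$ and $t(e_0)$ (which I would treat as distinct for clarity, noting the loop case $o(e_0)=t(e_0)$ needs a separate remark), exactly one entry of $F(v)$ equals $1$ and the rest are $0$, so $\big|\sum_j F_j(v)\big|^2 = 1$ and the contribution is $\frac{1}{\beta_{o(e_0)}}$ respectively $\frac{1}{\beta_{t(e_0)}}$. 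Hence
\begin{align*}
 h(f) = \int_{e_0} q + \frac{1}{\beta_{o(e_0)}} + \frac{1}{\beta_{t(e_0)}} < 0,
\end{align*}
while $\int_\Gamma |f|^2 = L(e_0) > 0$, so the Rayleigh quotient is negative and the min-max principle yields $\lambda_1(H) < 0$.

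There is no serious obstacle here; the only point requiring mild care is the bookkeeping for the vertex terms — making sure that a vertex incident to $e_0$ through several edges, or the case where $e_0$ is a loop (so $o(e_0) = t(e_0) =: v$ and $e_0$ contributes two entries, both equal to $1$, to $F(v)$, giving $\big|\sum_j F_j(v)\big|^2 = 4$ and a vertex term $\frac{4}{\beta_v}$) is handled correctly. In the loop case the stated hypothesis reads $\int_{e_0} q + \frac{2}{\beta_v} < 0$, and the computation above gives $h(f) = \int_{e_0} q + \frac{4}{\beta_v}$, so one should either assume $e_0$ is not a loop or adjust the constant; I would add a short remark to this effect. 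Otherwise the argument is a direct and routine application of the variational principle.
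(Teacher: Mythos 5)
Your proposal is correct and is essentially the paper's own proof: the same test function (constantly $1$ on $e_0$, zero elsewhere) plugged into the quadratic form via Lemma \ref{lem:forms}(iv), followed by the min-max principle. The one genuine addition is your remark on the loop case $o(e_0)=t(e_0)=:v$, which the paper's proof silently ignores; your bookkeeping there is right --- the vertex term becomes $\tfrac{4}{\beta_v}$ rather than the $\tfrac{2}{\beta_v}$ appearing in the hypothesis, so for a loop with $\beta_v>0$ the stated condition does not directly yield $h(f)<0$ and the theorem as written really does need either the exclusion of loops or an adjusted constant.
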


\begin{proof}
This time we may use the function $f$ being constantly equal to one on $e_0$ and zero otherwise. Then $f$ belongs to $\dom h$ and
\begin{align*}
 h (f) & = \int_\Gamma |f'|^2 + \int_\Gamma q |f|^2 + \sum_{v \in \cV} \frac{1}{\beta_v} \Big| \sum_{j=1}^{\deg(v)} F_j(v) \Big|^2 \\
 & = \int_{e_0} q  + \frac{1}{\beta_{o (e_0)}} + \frac{1}{\beta_{t (e_0)}} < 0.
\end{align*}
Thus $\lambda_1 (H) < 0$.
\end{proof}

\begin{example}
Let $\Gamma$ be a connected compact metric graph and $H$ the Laplacian (i.e.\ the Schrödinger operator with constant zero potential) on $\Gamma$ with $\delta'$ coupling conditions at each vertex $v$, where $\beta_v = 1$ for all $v \neq v_0$ and $\beta_{v_0} = - 1/2$, for a selected vertex $v_0$. If $e_0$ is any edge incident to $v_0$, then
\begin{align*}
 \frac{1}{\beta_{o (e_0)}} + \frac{1}{\beta_{t (e_0)}} = - 2 + 1 < 0.
\end{align*}
Therefore, Theorem \ref{thm:negDelta'} yields $\lambda_1 (H) < 0$.
\end{example}

The following example shows that the analogous condition for $\delta$ vertex conditions is not sufficient for existence of negative eigenvalues.

\begin{example}\label{ex:noNegEV}
Let $\Gamma$ be any compact metric graph. Consider again the case $q = 0$ constantly, and suppose that at each vertex $v \in \cV$ a $\delta$ coupling condition of strength $\alpha_v$ is imposed. Assume that for some $v_0 \in \cV$, $\alpha_{v_0} < 0$, while the remaining $\alpha_v$ are all positive. Our aim is to show that $h (f) \geq 0$ for all $f \in \dom h = \widetilde H^1 (\Gamma)$, supposed $\alpha_{v_0}$ is sufficiently close to zero, in relation to the lengths of its adjacent edges. Indeed, the Schrödinger operator $\widehat H$ obtained from $H$ by replacing the coupling coefficient $\alpha_{v_0}$ at $v_0$ by zero satisfies $\lambda_1 (\widehat H) > 0$. As the eigenvalue $\lambda_1 (H)$ depends continuously on the coupling coefficients, $\lambda_1 (H) > 0$ whenever $\alpha_{v_0} < 0$ is sufficiently close to zero. 
\end{example}

We finally point out that the number of negative eigenvalues of Schrödinger operators on metric graphs with general self-adjoint vertex conditions was studied in \cite{BL10}; cf.\ also \cite{H13}.

\section{Graph manipulations changing the total length of the graph}
\label{sec:surgery-principles_edges}

In the following we review the effect of certain geometric manipulations of a given metric graph onto the spectrum of a Schrödinger operator on the graph. We start with manipulations which change the total length of the metric graph $\Gamma$. These include increasing or shrinking the length of one or all edges, attaching new edges or graphs, or inserting a graph at a vertex.

\subsection{Increasing the length of one edge}
\label{subsec:Length_one}
Here, we consider the change of the eigenvalues in case we increase the length of one edge. For the Laplacian with continuity-Kirchhoff, $\delta$ or Dirichlet vertex conditions, the following theorem can be found in \cite[Corollary 3.12]{BKKM19}.

\begin{theorem}
\label{thm:edge_length}
  Let $\Gamma$ be a compact metric graph, $q\in L^\infty(\Gamma)$ real-valued and $H$ the Schr\"odinger operator on $\Gamma$ with potential $q$ and arbitrary self-adjoint coupling conditions. 
  Moreover, let $\widetilde \Gamma$ be the metric graph obtained from $\Gamma$ by increasing the length of one edge, i.e.\ there exists $e_0\in \cE$ such that $\widetilde L (e) = L (e)$ for $e \neq e_0$ and $\widetilde L (e_0) > L (e_0)$, where $\widetilde L$ is the length function for $\widetilde \Gamma$. Set $\widetilde q_e:=q_e$ for $e\neq e_0$ and $\widetilde{q}_{e_0}:=q_{e_0}(\frac{L(e_0)}{\widetilde L(e_0)} \cdot)$. Denote by $\widetilde H$ the Schr\"odinger operator on $\widetilde \Gamma$ with potential $\widetilde{q}$ equipped with the same coupling conditions as for $H$. Then
  \begin{align*}
    \lambda_k (\widetilde H) \leq \lambda_k (H)
  \end{align*}
  holds for all $k \in \N$ such that $\lambda_k (H) \geq 0$. If all coupling conditions are of $\delta$ type and $\lambda_1 (H) > 0$, i.e.\ the spectrum is positive, then
  \begin{align*}
    \lambda_1 (\widetilde H) < \lambda_1 (H).
  \end{align*}
\end{theorem}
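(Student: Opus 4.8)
The inequality $\lambda_k(\widetilde H)\le\lambda_k(H)$ for $\lambda_k(H)\ge 0$ is most naturally obtained through the min-max principle combined with a length-rescaling substitution. The idea is to pull back test functions on $\widetilde\Gamma$ to $\Gamma$: given $f\in\dom h$ (the form domain on $\Gamma$), define $\widetilde f$ on $\widetilde\Gamma$ by keeping $\widetilde f_e=f_e$ for $e\neq e_0$ and setting $\widetilde f_{e_0}(x):=f_{e_0}\!\big(\frac{L(e_0)}{\widetilde L(e_0)}x\big)$ on $[0,\widetilde L(e_0)]$. This map is a linear bijection $\dom h\to\dom\widetilde h$; crucially it preserves the boundary values $F(v)$ at every vertex, so the vertex conditions and the vertex form terms $\langle\Lambda_v P_{v,\mathrm R}F(v),P_{v,\mathrm R}F(v)\rangle$ are unchanged, and (since $\widetilde q_{e_0}$ was defined precisely by the same rescaling) the potential term $\int_\Gamma q|f|^2$ is exactly reproduced as $\int_{\widetilde\Gamma}\widetilde q|\widetilde f|^2$ after substitution. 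The only term that changes is the Dirichlet energy on $e_0$: writing $t=\widetilde L(e_0)/L(e_0)>1$, a change of variables gives $\int_0^{\widetilde L(e_0)}|\widetilde f_{e_0}'|^2 = t^{-1}\int_0^{L(e_0)}|f_{e_0}'|^2\le\int_0^{L(e_0)}|f_{e_0}'|^2$, while $\int_{\widetilde\Gamma}|\widetilde f|^2 = \int_\Gamma|f|^2 + (t-1)\int_{e_0}|f|^2\ge\int_\Gamma|f|^2$. Hence $\widetilde h(\widetilde f)\le h(f) + (\text{nonpositive or potential-sign-dependent terms})$; one must be slightly careful that $\int_\Gamma q|f|^2$ is reproduced exactly (it is) and only the kinetic and $L^2$-norm denominators move in the favorable direction.

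\textbf{From the form inequality to the eigenvalues.} Applying this to a $k$-dimensional subspace $F\subseteq\dom h$ realizing $\lambda_k(H)$ in the min-max principle, its image $\widetilde F$ is $k$-dimensional in $\dom\widetilde h$, and for every $\widetilde f=\widetilde{(f)}$ with $f\in F$ we get
\begin{align*}
 \frac{\widetilde h(\widetilde f)}{\int_{\widetilde\Gamma}|\widetilde f|^2} \le \frac{h(f)}{\int_\Gamma|f|^2}
\end{align*}
provided the right-hand numerator $h(f)$ is nonnegative — this is where the hypothesis $\lambda_k(H)\ge 0$ enters, since on $F$ the Rayleigh quotient is at most $\lambda_k(H)$, but to convert a smaller numerator and larger denominator into a smaller quotient one needs the numerator to be $\ge 0$ (dividing a nonnegative number by something larger decreases it; enlarging the denominator when the numerator could be negative would go the wrong way). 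Taking the max over $\widetilde F$ and then the min over all such subspaces yields $\lambda_k(\widetilde H)\le\lambda_k(H)$. I expect the management of signs here — correctly tracking that the numerator decrease is only guaranteed when $h(f)\ge 0$, and handling vectors $f\in F$ with $h(f)<0$ (which cannot occur once $\lambda_k(H)\ge0$ fails to force it, but one should phrase it via: either $h(f)\le 0$ gives the quotient $\le 0\le\lambda_k(\widetilde H)$ trivially... no — one must instead argue on the whole subspace) — to be the one genuinely delicate point. The clean way: if $\lambda_k(H)\ge 0$ then on $F$ we have $h(f)\le\lambda_k(H)\int_\Gamma|f|^2$, and also $h(f)$ may be negative; but then $\widetilde h(\widetilde f)\le h(f)\le\lambda_k(H)\int_\Gamma|f|^2\le\lambda_k(H)\int_{\widetilde\Gamma}|\widetilde f|^2$ uses $\lambda_k(H)\ge0$ with the enlarged denominator in the last step, so the quotient bound survives uniformly. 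This is the key manoeuvre and the main obstacle to state cleanly.

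\textbf{The strict inequality for $\delta$-type conditions with positive spectrum.} Here I would take $f$ to be the positive ground state eigenfunction of $H$, which exists and is unique up to scaling by Theorem \ref{thm:courant} (using connectedness of $\Gamma$, which is implicit since $\lambda_1(H)>0$ and $\delta$ conditions are imposed — or one argues componentwise). Then $f$ is strictly positive on all of $\Gamma$, in particular on $e_0$, so $\int_{e_0}|f|^2>0$, giving $\int_{\widetilde\Gamma}|\widetilde f|^2 > \int_\Gamma|f|^2$ strictly. Since $h(f)=\lambda_1(H)\int_\Gamma|f|^2>0$ and $\widetilde h(\widetilde f)\le h(f)$, we obtain
\begin{align*}
 \lambda_1(\widetilde H) \le \frac{\widetilde h(\widetilde f)}{\int_{\widetilde\Gamma}|\widetilde f|^2} \le \frac{h(f)}{\int_{\widetilde\Gamma}|\widetilde f|^2} < \frac{h(f)}{\int_\Gamma|f|^2} = \lambda_1(H),
\end{align*}
where the strict step is the strict increase of the denominator against a strictly positive numerator. (If $f_{e_0}'\not\equiv 0$ one even has strictness already in the kinetic term, but this is not needed.) The only thing to double-check is that $\delta$-type coupling conditions on $\Gamma$ indeed transfer to $\delta$-type on $\widetilde\Gamma$ with the same strengths — immediate from Definition \ref{def:Cond}, as the projections $P_{v,\cdot}$ and $\Lambda_v$ depend only on $\deg(v)$ and $\alpha_v$, both unchanged by lengthening an edge.
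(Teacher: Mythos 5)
There is a genuine gap in the first (and central) step. Your substitution $\widetilde f_{e_0}(x)=f_{e_0}\bigl(\frac{L(e_0)}{\widetilde L(e_0)}x\bigr)$ does not reproduce the potential term: with $t:=\widetilde L(e_0)/L(e_0)>1$ the change of variables gives
\begin{align*}
\int_0^{\widetilde L(e_0)}\widetilde q_{e_0}|\widetilde f_{e_0}|^2 \;=\; t\int_0^{L(e_0)}q_{e_0}|f_{e_0}|^2,
\end{align*}
not $\int_{e_0}q_{e_0}|f_{e_0}|^2$ as you assert --- the Jacobian factor $t$ survives because, unlike in Theorem \ref{thm:scaling_length}, the potential here is only reparametrized and not also divided by $t^2$. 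Consequently
\begin{align*}
\widetilde h(\widetilde f)-h(f)\;=\;\Bigl(\tfrac1t-1\Bigr)\int_{e_0}|f'|^2+(t-1)\int_{e_0}q|f|^2,
\end{align*}
which is strictly positive, e.g., when $f$ is locally constant on $e_0$ and $q>0$ there; so the inequality $\widetilde h(\widetilde f)\le h(f)$ on which your whole chain rests is false in general. Nor can the defect be absorbed by the enlarged denominator: that would require $\int_{e_0}q|f|^2-\frac1t\int_{e_0}|f'|^2\le\lambda_k(H)\int_{e_0}|f|^2$, a local Rayleigh-quotient bound on $e_0$ alone, which the global bound $h(f)\le\lambda_k(H)\int_\Gamma|f|^2$ does not provide (take $q$ large and positive on $e_0$, compensated by the other edges).

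The paper avoids this with a different test function: it keeps $f_{e_0}$ unchanged and inserts a constant plateau of length $\widetilde L(e_0)-L(e_0)$ at a point $x_0\in[0,L(e_0)]$. This preserves the boundary values and the kinetic energy exactly and only enlarges the $L^2$ norm, by $(\widetilde L(e_0)-L(e_0))|f_{e_0}(x_0)|^2$. The rest of your argument --- the sign bookkeeping using $\lambda_k(H)\ge 0$ via $\widetilde h(\widetilde f)\le h(f)\le\lambda_k(H)\int_\Gamma|f|^2\le\lambda_k(H)\int_{\widetilde\Gamma}|\widetilde f|^2$, and the strictness for $\delta$ conditions from positivity and simplicity of the ground state (Theorem \ref{thm:courant}) --- matches the paper's and is fine once the test function is repaired; your rescaling would also work as stated in the potential-free case $q_{e_0}=0$.
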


\begin{proof}
  Let $h$ be the quadratic form associated with $H$ and $\widetilde h$ the quadratic form associated with $\widetilde H$; cf.\ Proposition \ref{prop:SAconditions}.
  Let $k\in\N$ such that $\lambda_k (H) \geq 0$ and let $F \subseteq \dom \form$ be a $k$-dimensional subspace such that
  \begin{align*}
    h (f) \leq \lambda_k (H) \int_\Gamma |f|^2 \quad \text{for all}~f \in F.
  \end{align*}
  Let $x_0 \in [0, L (e_0)]$.
  %, and identify the corresponding edge in the new graph with the concatenation of the intervals $[0, x_0]$, $[0, \widetilde L (e) - L (e)]$ and $[x_0, L (e)]$. 
  For each $f \in F$ define a function $\widetilde f$ on $\widetilde \Gamma$ by $\widetilde f_e := f_e$ for $e \neq e_0$ and 
  \[\widetilde f_{e_0}(x):= \begin{cases}
			      f_{e_0}(x), & x\in [0,x_0],\\
			      f_{e_0}(x_0), & x\in (x_0,x_0+\widetilde{L}(e_0)-L(e_0)],\\
			      f_{e_0}\bigl(x-\bigl(\widetilde{L}(e_0)-L(e_0)\bigr)\bigr), & x\in (x_0+\widetilde{L}(e_0)-L(e_0),\widetilde{L}(e_0)].
                            \end{cases}\]                            
  Then $\widetilde F (v) = F (v)$ for each $v \in \cV$, in particular, $\widetilde f \in \dom \widetilde h$, $\widetilde h(\widetilde f) = h(f)$. Moreover,
  \begin{align}\label{eq:yeah}
  \int_{\widetilde \Gamma} |\widetilde f|^2 & = \int_\Gamma |f|^2  + (\widetilde L (e_0) - L (e_0)) |f_{e_0} (x_0)|^2 \geq \int_\Gamma |f|^2, \quad f \in F.
  \end{align}
  For all $f \in F$ such that $h (f) < 0$, clearly $\frac{ \widetilde h (\widetilde f)}{\int_{\widetilde \Gamma} |\widetilde f|^2} < 0 \leq \lambda_k (H)$, and for all $f \in F$ with $h (f) \geq 0$ it follows
  \begin{align*}
  \frac{\widetilde h (\widetilde f)}{\int_{\widetilde \Gamma} |\widetilde f|^2} \leq \frac{h (f)}{\int_\Gamma |f|^2} \leq \lambda_k.
  \end{align*}
  Hence $\lambda_k (\widetilde H) \leq \lambda_k (H)$. 

  If each vertex is equipped with a $\delta$ coupling condition, then $\ker (H - \lambda_1 (H))$ is one-dimensional and the corresponding eigenfunction has no zeroes on $\Gamma$, see Theorem \ref{thm:courant}. In particular, $f_{e_0} (x_0) \neq 0$ and~\eqref{eq:yeah} implies $\int_{\widetilde \Gamma} |\widetilde f|^2 > \int_\Gamma |f|^2 $. Hence, if $\lambda_1 (H) > 0$ then $\lambda_1 (\widetilde H) < \lambda_1 (H)$.
\end{proof}

The restriction to non-negative eigenvalues in the above theorem is necessary for some coupling conditions such as $\delta$  or $\delta'$ couplings, as the following examples show (see also \cite[Theorem 4.1 and Section 5]{EJ12}).

\begin{example}
We consider a loop graph, i.e.\ a graph consisting of one vertex $v$ and one edge $e$ of length $L := L (e)$ originating from and terminating at $v$, see Figure~\ref{fig:loop}. 
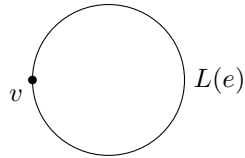
\begin{figure}[htb]
  \centering
  \begin{tikzpicture}
    \draw[fill] (2,0) circle(0.05) node[below left]{$v$};
    \draw (3,0) circle(1);
    \draw (4,0) node[right]{$L(e)$};
  \end{tikzpicture}
  \caption{The loop graph.}
  \label{fig:loop}
\end{figure}
At $v$ we impose a $\delta$ coupling condition of strength $\alpha := \alpha_v < 0$. Then the only negative eigenvalue of $H$ is given by the number $\lambda = - k^2$, where $k$ is the positive solution of
\begin{align*}
 2 k - \alpha \sinh (k L) - 2 k \cosh (k L) = 0,
\end{align*}
the corresponding eigenfunction being a linear combination of $x\mapsto \sinh (k x)$ and $x\mapsto \cosh (k x)$. It can be seen that $k$ is decreasing when $L$ increases, and thus $\lambda$ increases.

Exactly the same behavior is displayed by $\delta'$ coupling conditions. In the same setting, for a $\delta'$ condition of strength $\beta := \beta_v < 0$, the negative eigenvalues are given by $\lambda = - k^2$, where $k$ is the positive solution of
\begin{align*}
 \beta k \sinh (k L) + 2 \cosh (k L) + 2 = 0,
\end{align*}
and again, $\lambda$ increases when $L$ is increased.
\end{example}

\begin{example}
The following example was studied in \cite[Section 5]{EJ12}.    Let $\Gamma$ be a star graph, i.e.\ a graph consisting of a central vertex $v_0$ and a number of vertices $v_1, \dots, v_r$ of degree one, each of which is connected to $v_0$ by one edge. In this case, let us assume that $\Gamma$ is a star with three edges two of which have length $l_1:=1$ and the third of which has length $l_2:=l>0$. At the degree-one vertices we impose $\delta$ coupling conditions, with strength $\alpha_1:=-3/2$ for the edges of length $1$, and $\alpha_2:=-2$ for the edge of length $l$. At the central vertex we impose a strength $\alpha<0$, see Figure \ref{fig:star3}.
\begin{figure}[htb]
  \centering
  \begin{tikzpicture}
    \draw[fill] (0,0) circle(0.05) node[below]{$\alpha$};
    \draw[fill] (0,2) circle(0.05) node[above]{$\alpha_2=-2$};
    \draw[fill] ({sqrt(3)},-1) circle(0.05) node[below]{$\alpha_1=-3/2$};
    \draw[fill] ({-sqrt(3)},-1) circle(0.05) node[below]{$\alpha_1=-3/2$};;
    \draw (0,0)--(0,2);
    \draw (0,0)--({sqrt(3)},-1);
    \draw (0,0)--(-{sqrt(3)},-1);
    \draw (0,1) node[left]{$l$};
    \draw({0.5*sqrt(3)},-0.5) node[above right]{$l_1=1$};
    \draw({-0.5*sqrt(3)},-0.5) node[above left]{$l_1=1$};
  \end{tikzpicture}
  \caption{The star graph.}
  \label{fig:star3}
\end{figure}
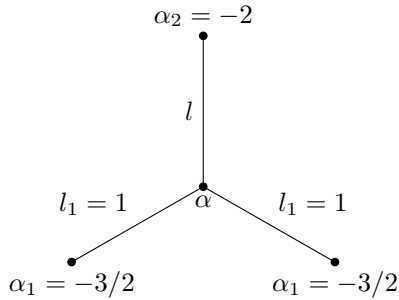
    Let $H$ be the Laplacian on $\Gamma$ corresponding to these vertex conditions. Then $\lambda_1(H)<0$ by Theorem \ref{thm:negDelta}. In \cite[Section 5]{EJ12} it was shown that a constant $\alpha_c \approx 1.09$ with the following properties exists:
    \begin{itemize}
     \item if $\alpha_{c}<\alpha<0$, then $\lambda_1(H)$ decreases is $l$ increases;
     \item if $\alpha=\alpha_c$ then $\lambda_1(H)$ stays constant as $l$ increases;
     \item if $\alpha<\alpha_c$, then $\lambda_1(H)$ increases as $l$ increases.
    \end{itemize}
\end{example}

Finally, we point out that strictness of the inequality in Theorem \ref{thm:edge_length} cannot be guaranteed in the case of $\delta'$ coupling conditions, as the following example shows.

\begin{example}
Consider the metric graph and vertex conditions of Example \ref{ex:3star}. There the edge lengths do not play any role and any changes of them will keep $\lambda_1 (H) = 0$ fixed. 
\end{example}

\subsection{Increasing the length of all edges}
\label{subsec:lengthAll}
Here, we consider the change of the eigenvalues in case we increase the length of all edges simultaneously. Iterating the results in Subsection \ref{subsec:Length_one}, we easily obtain that the positive eigenvalues decrease if all edge lengths are increased. More can be said if all edge lengths are increased by the same factor and also the coupling conditions are adjusted according to the factor. At least for continuity-Kirchhoff and $\delta$ vertex conditions the following theorem is folklore. However, for the sake of completeness we present a proof.
\begin{theorem}
\label{thm:scaling_length}
    Let $\Gamma$ be a compact metric graph, $q\in L^\infty(\Gamma)$ real-valued, and let $H$ be the Schr\"odinger operator on $\Gamma$ with potential $q$ and general self-adjoint vertex conditions as described in Proposition \ref{prop:SAconditions}. Let $t>0$ and obtain $\widetilde{\Gamma}$ by scaling each edge, the potential and the strengths, i.e.\ $\widetilde \Gamma$ has the length function $\widetilde{L}(e):=tL(e)$ and the potential $\widetilde{q}_e:=\frac{1}{t^2} q_e(\frac{\cdot}{t})$  for $e\in \cE$, as well as $\widetilde \Lambda_v = \frac{1}{t} \Lambda_v$ for $v\in \cV$. Let $\widetilde{H}$ be the corresponding Schr\"odinger operator on $\widetilde{\Gamma}$. Then
    \[\lambda_k(\widetilde{H}) = \frac{1}{t^2} \lambda_k(H)\]
    holds for all $k\in\N$.
\end{theorem}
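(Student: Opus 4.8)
The plan is to prove the scaling identity $\lambda_k(\widetilde H)=\frac1{t^2}\lambda_k(H)$ by exhibiting an explicit unitary $U\colon L^2(\Gamma)\to L^2(\widetilde\Gamma)$ that intertwines $H$ and $t^2\widetilde H$, so that the two operators are unitarily equivalent up to the factor $t^2$; the equality of eigenvalues then follows immediately since unitary equivalence preserves spectra with multiplicities. The natural candidate is the edge-wise dilation $(Uf)_e(x):=t^{-1/2}f_e(x/t)$ for $x\in(0,tL(e))$, which is unitary because the Jacobian of $x\mapsto x/t$ contributes exactly the factor $t^{-1}$ needed to match $\int_{\widetilde\Gamma}|Uf|^2=\int_\Gamma|f|^2$.

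First I would verify that $U$ maps $\dom H$ onto $\dom\widetilde H$. The key observation is how boundary values transform: if $g=Uf$ then $g_e(0)=t^{-1/2}f_e(0)$ and $g_e(tL(e))=t^{-1/2}f_e(L(e))$, so $G(v)=t^{-1/2}F(v)$ at every vertex, while by the chain rule $g_e'(x)=t^{-3/2}f_e'(x/t)$, so $G'(v)=t^{-3/2}F'(v)$. Hence the Dirichlet condition $P_{v,\mathrm D}G(v)=0$ is equivalent to $P_{v,\mathrm D}F(v)=0$, the Neumann condition $P_{v,\mathrm N}G'(v)=0$ is equivalent to $P_{v,\mathrm N}F'(v)=0$, and the Robin condition $P_{v,\mathrm R}G'(v)=\widetilde\Lambda_vP_{v,\mathrm R}G(v)$ becomes $t^{-3/2}P_{v,\mathrm R}F'(v)=\frac1t\Lambda_v\,t^{-1/2}P_{v,\mathrm R}F(v)$, i.e.\ $P_{v,\mathrm R}F'(v)=\Lambda_vP_{v,\mathrm R}F(v)$, precisely because $\widetilde\Lambda_v=\frac1t\Lambda_v$. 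Thus $f\in\dom H\iff Uf\in\dom\widetilde H$. (One also checks $\widetilde H^2$ membership is preserved, which is clear from the chain rule.)

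Next I would compute the action of $\widetilde H$ on $g=Uf$: on edge $e$, $(\widetilde H g)_e(x)=-g_e''(x)+\widetilde q_e(x)g_e(x)=-t^{-5/2}f_e''(x/t)+t^{-2}q_e(x/t)\,t^{-1/2}f_e(x/t)=t^{-2}\big(U(\mathcal L f)\big)_e(x)=t^{-2}(UHf)_e(x)$. Therefore $\widetilde H U=t^{-2}UH$, i.e.\ $U^{-1}\widetilde H U=t^{-2}H$, which gives the claim. Alternatively, and perhaps cleaner to present, one can run the same computation purely on the level of quadratic forms using Lemma~\ref{lem:forms} or Proposition~\ref{prop:SAconditions}: a substitution shows $\widetilde h(Uf)=\int_{\widetilde\Gamma}|(Uf)'|^2+\int_{\widetilde\Gamma}\widetilde q|Uf|^2+\sum_v\langle\widetilde\Lambda_vP_{v,\mathrm R}(Uf)(v),P_{v,\mathrm R}(Uf)(v)\rangle$, where $\int_{\widetilde\Gamma}|(Uf)'|^2=t^{-2}\int_\Gamma|f'|^2$ (the $t^{-3}$ from $|g_e'|^2$ times $t$ from $dx$), $\int_{\widetilde\Gamma}\widetilde q|Uf|^2=t^{-2}\int_\Gamma q|f|^2$, and the vertex term scales as $\frac1t\cdot t^{-1}=t^{-2}$; together with $\int_{\widetilde\Gamma}|Uf|^2=\int_\Gamma|f|^2$ and $U(\dom h)=\dom\widetilde h$, the min-max principle quoted after Proposition~\ref{prop:SAconditions} yields $\lambda_k(\widetilde H)=\min_{\dim F=k}\max_{f\in F}\frac{\widetilde h(f)}{\int_{\widetilde\Gamma}|f|^2}=t^{-2}\min_{\dim F=k}\max_{f\in F}\frac{h(f)}{\int_\Gamma|f|^2}=t^{-2}\lambda_k(H)$.

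There is no real obstacle here; the proof is entirely mechanical. The only point requiring a modicum of care is the bookkeeping of powers of $t$ — in particular making sure the exponent $-1/2$ in the definition of $U$ is the one that simultaneously makes $U$ unitary and makes the $\Lambda_v$-scaling $\widetilde\Lambda_v=\frac1t\Lambda_v$ (rather than, say, $\frac1{t^2}\Lambda_v$) the correct one for the Robin conditions to be preserved. I would present the quadratic-form version since it meshes directly with the min-max principle already recorded in the text and avoids separately discussing domains of the operators.
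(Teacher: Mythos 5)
Your proposal is correct and, in the quadratic-form version you say you would present, it is essentially the paper's own proof: the paper uses the unnormalized dilation $\widetilde f_e(x)=f_e(x/t)$, computes $\widetilde h(\widetilde f)=\tfrac1t h(f)$ and $\int_{\widetilde\Gamma}|\widetilde f|^2=t\int_\Gamma|f|^2$ by substitution, and concludes via min-max, which is your argument up to the harmless $t^{-1/2}$ normalization. Your additional operator-level intertwining $U^{-1}\widetilde HU=t^{-2}H$ is a correct (and slightly stronger) remark but not a genuinely different route.
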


\begin{proof}
Let $h$ and $\widetilde{h}$ be the quadratic forms associated with $H$ and $\widetilde{H}$, respectively. For $f \in \widetilde H^1 (\Gamma)$ and its restriction $f_e$ to an arbitrary edge $e \in \cE$, parametrised as $[0, L (e)]$, consider the function $\widetilde f \in \widetilde H^1 (\widetilde \Gamma)$ defined on each edge $e$ by
\begin{align*}
 \widetilde{f_e} (x) := f_e \left( \frac{x}{t} \right), \quad x \in [0, t L (e)].
\end{align*}
Since the boundary values of $\widetilde f_e$ and $f_e$ at each vertex are the same, i.e., $\widetilde F (v) = F (v)$ for each $v \in \cV$, $f$ belongs to $\dom h$ if and only if $\widetilde f$ belongs to $\dom \widetilde h$. By the substitution $y = \frac{x}{t}$ we observe
\begin{align*}
 \widetilde{h}(\widetilde{f}) & = \sum_{e\in \cE}\int_{0}^{t L(e)} |\widetilde f'|^2 + \sum_{e\in \cE} \int_0^{t L(e)} \widetilde{q}_e |\widetilde{f}_e|^2 + \sum_{v \in \cV} \big\langle \widetilde \Lambda_v P_{v, \rm R} \widetilde F (v), P_{v, \rm R} \widetilde F (v) \big\rangle\\
 & = \frac{1}{t} \sum_{e\in \cE}\int_{0}^{ L(e)} | f'|^2 + \frac{1}{t}\sum_{e\in \cE} \int_0^{{L}(e)} {q}_e |{f}_e|^2 + \frac{1}{t} \sum_{v \in \cV} \big\langle \Lambda_v P_{v, \rm R} F (v), P_{v, \rm R} F (v) \big\rangle \\
 & = \frac{1}{t} h(f).
\end{align*}
Moreover, $\int_{\widetilde \Gamma} |\widetilde{f}|^2 = t \int_{\Gamma} |f|^2$, by the same substitution. Thus, the min-max principle yields
\begin{align*}
 \lambda_k (\widetilde{H}) = \min_{\substack{\widetilde F \subseteq \dom \widetilde h\\ \dim \widetilde F = k}} \max_{\widetilde f \in \widetilde F} \frac{\widetilde h (\widetilde f)}{\int_{\widetilde \Gamma} |\widetilde f|^2} = \min_{\substack{F \subseteq \dom h\\ \dim F = k}} \max_{f \in F} \frac{1}{t^2} \frac{h (f)}{\int_{\Gamma} |f|^2} = \frac{1}{t^2} \lambda_k (H)
\end{align*}
for all $k\in\N$. 
\end{proof}

We emphasize the special cases of $\delta$  and $\delta'$ coupling conditions in the next corollary; observe that the coupling strengths for $\delta$ and $\delta'$ conditions have to be scaled differently.

\begin{corollary}
Let $\Gamma$ be a compact metric graph, $q\in L^\infty(\Gamma)$ real-valued and let $H$ be the Schr\"odinger operator on $\Gamma$ with potential $q$ such that at each vertex $v \in \cV$ either a $\delta$ coupling condition with strength $\alpha_v$ or a $\delta'$ coupling conditions with strength $\beta_v$ is imposed. Let $t>0$ and obtain $\widetilde{\Gamma}$ by scaling each edge, the potential and the strengths, i.e.\ $\widetilde{L}(e):=tL(e)$ and $\widetilde{q}_e:=\frac{1}{t^2} q_e(\frac{\cdot}{t})$  for $e\in \cE$, and $\widetilde{\alpha}_v:=\frac{1}{t}\alpha_v$ respectively $\widetilde{\beta}_v:=t\beta_v$ for each $v\in \cV$. Let $\widetilde{H}$ be the corresponding Schr\"odinger operator on $\widetilde{\Gamma}$. Then
    \[\lambda_k(\widetilde{H}) = \frac{1}{t^2} \lambda_k(H)\]
holds for all $k\in\N$.
\end{corollary}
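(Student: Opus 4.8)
The plan is to derive this corollary directly from Theorem~\ref{thm:scaling_length} by checking that the stated scaling of the $\delta$ and $\delta'$ strengths is precisely the scaling $\widetilde\Lambda_v = \frac1t \Lambda_v$ required there. The only content is bookkeeping with Definition~\ref{def:Cond}, so there is essentially no obstacle; the one point that deserves care is that the projections $P_{v,\mathrm D}, P_{v,\mathrm N}, P_{v,\mathrm R}$ do \emph{not} change under scaling (they are the combinatorial objects $\cP_d, \cQ_d$ depending only on $d = \deg(v)$, which is unaffected by changing edge lengths), so that the only model datum at the vertex that is rescaled is $\Lambda_v$.

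Concretely, I would argue as follows. Fix a vertex $v$ with $d := \deg(v)$. If a $\delta$ coupling of strength $\alpha_v$ is imposed at $v$, then by Definition~\ref{def:Cond}(b) we have $P_{v,\mathrm R} = \cP_d$ and $\Lambda_v$ is multiplication by $\frac{\alpha_v}{d}$ on $\ran P_{v,\mathrm R}$. Scaling the potential and lengths by $t$ as in Theorem~\ref{thm:scaling_length} and replacing $\Lambda_v$ by $\frac1t\Lambda_v$ — i.e.\ by multiplication by $\frac{1}{t}\cdot\frac{\alpha_v}{d} = \frac{\alpha_v/t}{d}$ — is, again by Definition~\ref{def:Cond}(b), exactly a $\delta$ coupling of strength $\widetilde\alpha_v = \frac1t\alpha_v$ on the scaled graph $\widetilde\Gamma$. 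Similarly, if a $\delta'$ coupling of strength $\beta_v$ is imposed at $v$, then by Definition~\ref{def:Cond}(d) $\Lambda_v$ is multiplication by $\frac{d}{\beta_v}$; replacing it by $\frac1t\Lambda_v$, i.e.\ multiplication by $\frac{1}{t}\cdot\frac{d}{\beta_v} = \frac{d}{t\beta_v}$, is by the same definition a $\delta'$ coupling of strength $\widetilde\beta_v = t\beta_v$ (note $t\beta_v \neq 0$ since $\beta_v\neq 0$, so this is legitimate).

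Hence the Schr\"odinger operator $\widetilde H$ on $\widetilde\Gamma$ with potential $\widetilde q$ and the coupling conditions with strengths $\widetilde\alpha_v$, $\widetilde\beta_v$ described in the statement coincides with the operator obtained from $H$ by the scaling procedure of Theorem~\ref{thm:scaling_length}. Applying that theorem gives $\lambda_k(\widetilde H) = \frac{1}{t^2}\lambda_k(H)$ for all $k\in\N$, which is the claim. I would also remark in passing why the two strengths scale oppositely: in the $\delta$ case the physically relevant quantity $\frac{\alpha_v}{d}$ sits directly in $\Lambda_v$, while in the $\delta'$ case it is its reciprocal $\frac{\beta_v}{d}$ (up to the $d$) that enters through $\Lambda_v = \frac{d}{\beta_v}$, so enforcing $\widetilde\Lambda_v = \frac1t\Lambda_v$ forces $\widetilde\alpha_v = \frac1t\alpha_v$ but $\widetilde\beta_v = t\beta_v$.
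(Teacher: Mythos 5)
Your proposal is correct and follows exactly the paper's own route: the paper likewise deduces the corollary from Theorem~\ref{thm:scaling_length} by noting that $\Lambda_v = \frac{\alpha_v}{\deg(v)}$ for a $\delta$ coupling and $\Lambda_v = \frac{\deg(v)}{\beta_v}$ for a $\delta'$ coupling, so that $\widetilde\Lambda_v = \frac{1}{t}\Lambda_v$ corresponds precisely to $\widetilde\alpha_v = \frac{1}{t}\alpha_v$ respectively $\widetilde\beta_v = t\beta_v$. Your additional remarks (the projections being unchanged, and the reason the two strengths scale oppositely) are accurate elaborations of the same bookkeeping.
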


\begin{proof}
 This follows directly from Theorem \ref{thm:scaling_length} and, at each vertex $v$, $\Lambda_v = \frac{\alpha_v}{\deg(v)}$ in the case of a $\delta$ coupling, respectively $\Lambda_v = \frac{\deg(v)}{\beta_v}$ in the case of a $\delta'$ coupling.
\end{proof}

\subsection{Attaching an edge between two vertices}

Next we focus on the mani\-pulation in which a new edge with finite length between two vertices $v_1,v_2\in \cV$ is added, see Figure \ref{fig:THMmonotonicity}. 
\begin{figure}[htb]
  \centering
  \begin{tikzpicture}
    \draw[fill] (0,0) circle(0.05);
    \draw[fill] (2,0) circle(0.05);
    \draw[fill] (1,1.5) circle(0.05) node[left]{$v_1$};
    \draw[fill] (4,0) circle(0.05);
    \draw[fill] (3,1.5) circle(0.05) node[right]{$v_2$};
    \draw (1,1.5)--(0,0)--(2,0);
    \draw (2,0)--(4,0)--(3,1.5);
    \draw (1,1.5) arc(180:247.38:1.625);
    \draw (3,1.5) arc(0:-67.38:1.625);
    \begin{scope}[shift={(7,0)}]
      \draw[fill] (0,0) circle(0.05);
      \draw[fill] (2,0) circle(0.05);
      \draw[fill] (1,1.5) circle(0.05) node[left]{$v_1$};
      \draw[fill] (4,0) circle(0.05);
      \draw[fill] (3,1.5) circle(0.05) node[right]{$v_2$};
      \draw (1,1.5)--(0,0)--(2,0);
      \draw (2,0)--(4,0)--(3,1.5);
      \draw (1,1.5) arc(180:247.38:1.625);
      \draw (3,1.5) arc(0:-67.38:1.625);
      \draw[semithick] (1,1.5)--(3,1.5);
    \end{scope}
  \end{tikzpicture}
  \caption{Attaching a new edge between the vertices $v_1$ and $v_2$.}
  \label{fig:THMmonotonicity}
\end{figure}
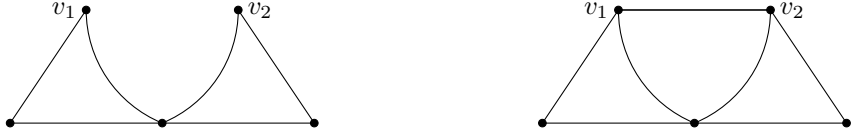
For the case of $\delta'$ coupling conditions at $v_1$ and $v_2$, this may only decrease the eigenvalues of $H$; this can be proven by simply extending test functions in the Rayleigh quotient by zero on the respective new edges.

\begin{theorem}[{\cite[Theorem 3.2]{RS20}}]
\label{thm:monotonicity}
Let $\Gamma$ be a compact metric graph, $q\in L^\infty(\Gamma)$ real, and let $v_1, v_2\in \cV$ be two distinct vertices of $\Gamma$. Furthermore, let $H$ be the Schr\"odinger operator on $\Gamma$ with potential $q$ subject to arbitrary self-adjoint coupling conditions at each vertex $v \in \cV \setminus \{v_1, v_2\}$, and having $\delta'$ coupling conditions of strengths $\beta_{v_1}$ respectively $\beta_{v_2}$ at $v_1$ and $v_2$; we allow the case of strengths zero, i.e.\ anti-Kirchhoff conditions. Let $\widetilde \Gamma$ be the graph obtained from $\Gamma$ by adding an extra edge $\widetilde{e}$ of finite length connecting $v_1$ and $v_2$ with potential $q_{\widetilde{e}}\in L^\infty((0,L(\widetilde{e})))$. Moreover, let $\widetilde H$ be the Schr\"odinger operator in $L^2 (\widetilde \Gamma)$ having the same potentials on the edges and the same coupling conditions as $H$ on all $v \in \cV \setminus \{v_1, v_2\}$, as well as the same strengths for the $\delta'$ couplings at $v_1$ and $v_2$, respectively. Then
\begin{align*}
  \lambda_k (\widetilde H) \leq \lambda_k (H)
\end{align*}
holds for all $k \in \N$.
\end{theorem}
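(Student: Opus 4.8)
The plan is to use the variational (min-max) characterization of the eigenvalues, exploiting the fact that a $\delta'$-type vertex condition imposes no constraint in the form domain and that its contribution to the quadratic form only depends on the \emph{sum} of the adjacent boundary values, hence is insensitive to appending a vanishing one. Let $h$ and $\widetilde h$ be the quadratic forms associated with $H$ and $\widetilde H$ according to Proposition~\ref{prop:SAconditions}, and fix $k \in \N$. By the min-max principle choose a $k$-dimensional subspace $F \subseteq \dom h$ with $h(f) \le \lambda_k(H) \int_\Gamma |f|^2$ for all $f \in F$ (the span of the first $k$ eigenfunctions works). For $f \in F$ define $\widetilde f$ on $\widetilde \Gamma$ by $\widetilde f_e := f_e$ for every edge $e$ of $\Gamma$ and $\widetilde f_{\widetilde e} := 0$ on the new edge $\widetilde e$.

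First I would check that $\widetilde f \in \dom \widetilde h$. Since $\widetilde f$ vanishes on $\widetilde e$, its restriction to each edge lies in $H^1$, so $\widetilde f \in \widetilde H^1(\widetilde\Gamma)$. At every vertex $v \in \cV \setminus \{v_1, v_2\}$ the boundary value vector $F(v)$ and the vertex conditions are literally unchanged. At $v_1$ and $v_2$ the only constraint the form domain can impose comes through $P_{v, \rm D}$: for a genuine $\delta'$ coupling ($\beta_v \neq 0$) one has $P_{v, \rm D} = 0$, so there is nothing to check, while in the anti-Kirchhoff case ($\beta_v = 0$) the requirement $\sum_j F_j(v) = 0$ is preserved because the extra entry appended to the boundary value vector at $v$ is the endpoint value of $\widetilde f_{\widetilde e}$, which is $0$. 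Hence $\widetilde f \in \dom \widetilde h$.

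Next I would compare the two Rayleigh quotients. Because $\widetilde f_{\widetilde e} \equiv 0$, the new edge contributes nothing to the edge integrals $\int_{\widetilde\Gamma} |\widetilde f'|^2 + \int_{\widetilde\Gamma} \widetilde q |\widetilde f|^2$, and $\int_{\widetilde\Gamma}|\widetilde f|^2 = \int_\Gamma |f|^2$. For the vertex terms, Lemma~\ref{lem:forms}(iii)--(iv) shows that at a $\delta'$ vertex with strength $\beta_v$ the contribution equals $\frac{1}{\beta_v}\big|\sum_j F_j(v)\big|^2$ (and vanishes in the anti-Kirchhoff case); appending the value $0$ of $\widetilde f_{\widetilde e}$ to the boundary value vector at $v_1$ and at $v_2$ leaves this sum, hence this term, unchanged, and all other vertex terms are manifestly the same. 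Therefore $\widetilde h(\widetilde f) = h(f)$ for every $f \in F$.

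Finally, $f \mapsto \widetilde f$ is linear and injective, so $\widetilde F := \{\widetilde f : f \in F\}$ is a $k$-dimensional subspace of $\dom \widetilde h$ satisfying $\widetilde h(\widetilde f) \le \lambda_k(H)\int_{\widetilde\Gamma}|\widetilde f|^2$ for all $\widetilde f \in \widetilde F$, and the min-max principle applied to $\widetilde H$ yields $\lambda_k(\widetilde H) \le \lambda_k(H)$. I do not expect a serious obstacle: the whole content lies in the two compatibility checks above — that zero-extension respects the $\delta'$/anti-Kirchhoff form domain and does not alter the vertex energy — and both are immediate consequences of Lemma~\ref{lem:forms}. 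The only point needing a little care is the bookkeeping of the orientation and endpoint labelling of $\widetilde e$ inside the vectors $F(v_1)$ and $F(v_2)$, but since the appended entries vanish this is irrelevant to the estimate.
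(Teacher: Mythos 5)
Your proposal is correct and follows essentially the same route as the paper: extend each test function by zero on the new edge, verify compatibility with the $\delta'$/anti-Kirchhoff form domain, observe that the vertex terms depend only on the sum of boundary values (so appending a zero changes nothing), and conclude by the min-max principle. Your explicit treatment of the anti-Kirchhoff case is a minor addition; the paper simply states that this case is analogous.
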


\begin{proof}
We provide a proof for the case $\beta_{v_1} \neq 0 \neq \beta_{v_2}$; the case of an anti-Kirchhoff condition at one or both of the vertices $v_1, v_2$ is analogous. Let $h$ be the quadratic form associated with $H$ and $\widetilde h$ be the quadratic form associated with $\widetilde H$. Furthermore, let $F$ be a $k$-dimensional subspace of $\dom h$ such that
 \[h(f) \leq \lambda_k(H) \int_\Gamma |f|^2 \qquad \text{for all } f\in F.\]
 For $f\in F$ let $\widetilde{f}$ be the extension of $f$ by zero on $\widetilde{e}$, and $\widetilde{F}$ the space of all these $\widetilde{f}$. Then $\widetilde{F}$ is $k$-dimensional and $\widetilde{F}\subseteq\dom \widetilde{h}$.
 For $\widetilde f\in\widetilde F$ we observe
 \begin{align*}
  \widetilde{h}(\widetilde{f}) & = \int_\Gamma |f'|^2 + \int_\Gamma q|f|^2 + \sum_{v\in \cV\setminus\{v_1,v_2\}} \langle \Lambda_{v} P_{v,\rm R}F(v),P_{v,\rm R}F(v)\rangle \\
  & \quad + \frac{1}{\beta_{v_1}} \bigg| \sum_{j = 1}^{\deg (v_1)} F_j (v_1) \bigg|^2 + \frac{1}{\beta_{v_2}} \bigg| \sum_{j = 1}^{\deg (v_2)} F_j (v_2) \bigg|^2 = h (f).
 \end{align*}
%  Now, exploiting the coupling conditions at $v_1$ and $v_2$, cf.\ Lemma \ref{lem:forms}, we obtain
%  \[\langle \widetilde\Lambda_{v_j} \widetilde P_{v_j,\rm R}\widetilde F(v_j),\widetilde P_{v_j,\rm R}\widetilde F(v_j)\rangle = \langle \Lambda_{v_j}  P_{v_j,\rm R} F(v_j), P_{v_j,\rm R} F(v_j)\rangle \quad\text{for }j\in\{1,2\}.
%  \]
 Thus,
 \[\widetilde{h}(\widetilde{f}) = h(f) \leq \lambda_k(H) \int_\Gamma |f|^2 = \lambda_k(H) \int_{\widetilde{\Gamma}} |\widetilde f|^2 \quad\text{for all }\widetilde{f}\in \widetilde{F}.\]
 The min-max principle yields the assertion.
\end{proof}

The assertion of Theorem \ref{thm:monotonicity} is wrong for $\delta$ coupling conditions, and no definite statement is possible in this case, as the next example shows.

\begin{example}[{\cite[Example 1]{KMN13} and \cite[Example 3.3]{RS20}}]
\label{ex:KHdelta}
Let $\Gamma$ be the metric graph with two vertices $v_1, v_2$ and one edge of length~$1$ connecting these two, see Figure~\ref{fig:monotonicity}. 
Let $H$ be the Laplacian in $L^2 (\Gamma)$ with a $\delta$ condition of strength $\alpha \in \R$ at $v_1$ and a Kirchhoff condition at $v_2$. 
Note that both vertices have degree one; hence, the condition at $v_1$ is a Robin boundary condition and the one at $v_2$ is Neumann. 
Note further that for $\alpha = 0$ the condition at $v_1$ is Neumann, too. 
Moreover, let $\widetilde \Gamma$ be the graph obtained from $\Gamma$ by adding another edge of length $\ell > 0$ that also connects $v_1$ to $v_2$, and let $\widetilde H$ be the Laplacian in $L^2 (\widetilde \Gamma)$ subject to the natural extensions of the conditions for $H$, namely a $\delta$ condition of strength $\alpha$ at $v_1$ and a Kirchhoff condition at $v_2$. We look at two different cases.

  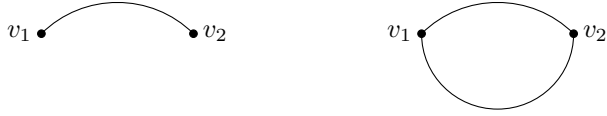
\begin{figure}[htb]
    \centering
    \begin{tikzpicture}
      \draw[fill] (0,0) circle(0.05);
      \draw[fill] (2,0) circle(0.05);
      \draw (0,0) node[left]{$v_1$};
      \draw (2,0) node[right]{$v_2$};
      \draw (2,0) arc(45:135:1.41421356237);
      \begin{scope}[shift={(5,0)}]
	\draw[fill] (0,0) circle(0.05);
	\draw[fill] (2,0) circle(0.05);
	\draw (0,0) node[left]{$v_1$};
	\draw (2,0) node[right]{$v_2$};
	\draw (2,0) arc(45:135:1.41421356237);
	\draw (2,0) arc(0:-180:1);
      \end{scope}
    \end{tikzpicture}     
    \caption{The graphs in Example~\ref{ex:KHdelta}. Left: $\Gamma$, right: $\widetilde{\Gamma}$.}
  \label{fig:monotonicity}
  \end{figure}

If $\alpha = 0$ (see~\cite[Example 1]{KMN13}) then $\lambda_k(H) = \pi^2(k-1)^2$ for all $k\in\N$, and $\widetilde{H}$ is unitarily equivalent to the Laplacian on an interval of length $1+\ell$ with periodic boundary conditions, which implies $\lambda_1(\widetilde{H}) = 0$ and $\lambda_{2k}(\widetilde{H}) = \lambda_{2k+1}(\widetilde{H}) = \frac{4\pi^2}{(1+\ell)^2} k^2$ for all $k\in\N$. 
Hence for $\ell<1$ we observe $\lambda_2(\widetilde{H}) > \lambda_2(H)$, whereas for $\ell\geq1$ we obtain $\lambda_k(\widetilde{H}) \leq \lambda_k(H)$ for all $k\in\N$ (and even a strict inequality for $k>2$; for $\ell>1$ the inequality is strict also for $k=2$).

If $\alpha = 1$ then simple calculations yield $\lambda_1(H) \approx 0.74017$. Furthermore, letting $\ell = 0.1$ one obtains $\lambda_1 (\widetilde{H}) \approx 0.83156$, that is, $\lambda_1 (H) < \lambda_1 (\widetilde H)$. Hence, adding an edge may increase the eigenvalues also for $\alpha \neq 0$.
\end{example}

\subsection{Attaching a pendant graph}

In this section we study how the eigenvalues behave if we attach a whole new graph to one vertex of another given graph. In contrast to the above Theorem \ref{thm:monotonicity}, extending test functions in the Rayleigh quotient constantly to the new edge is admissibile for both $\delta$  and $\delta'$ couplings. The result includes, in particular, the case of attaching a single pendant edge.

\begin{theorem}[{cf.\ \cite[Theorem 3.10]{BKKM19}, \cite[Theorem 3.5]{RS20}}]
\label{thm:attaching_graph}
Let $\Gamma$ be a compact metric graph, $q\in L^\infty(\Gamma)$ real and let $v\in \cV$ be a vertex of $\Gamma$. Furthermore, let $H$ be the Schr\"odinger operator on $\Gamma$ with potential $q$ subject to arbitrary self-adjoint vertex conditions at each vertex $w \in \cV \setminus \{v\}$, and with a $\delta$  or $\delta'$ coupling condition at $v$. Let $\widehat\Gamma$ be a compact metric graph, $\widehat q\in L^\infty(\widehat\Gamma)$, $\widehat{v}\in \widehat \cV$ be a vertex of $\widehat \Gamma$ and choose arbitrary self-adjoint vertex conditions at each vertex $w \in \widehat \cV \setminus \{\widehat v\}$. Let $\widetilde \Gamma$ be the graph obtained from $\Gamma$ by attaching the graph $\widehat \Gamma$ with vertex $\widehat v$ at $v$ by identifying $v$ and $\widehat{v}$, see Figure \ref{fig:thm_attaching_graph}.
Let $\widetilde H$ be the Schr\"odinger operator in $L^2 (\widetilde \Gamma)$ having the same vertex conditions as $H$ on all $w \in \cV\setminus \{v\}$ and the chosen coupling conditions at $w\in\widehat{\cV}\setminus\{\widehat{v}\}$, and with a $\delta$  or $\delta'$ coupling condition at $v$ with the same strength as for $H$. If the condition at $v$ is of $\delta$ type we assume in addition that the condition of $\widetilde H$ at $w\in\widehat{\cV}\setminus\{\widehat{v}\}$ are $\delta$ coupling conditions with strengths $\alpha_{w} \leq 0$ for all $w\in \widehat{\cV}\setminus\{\widehat{v}\}$ and that $\int_{\widehat \Gamma} \widehat q \leq 0$. Then
\begin{align*}%\label{eq:dasWollenWir}
  \lambda_k (\widetilde H) \leq \lambda_k (H) 
\end{align*}
holds for all $k \in \N$. 
\end{theorem}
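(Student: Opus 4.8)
The plan is to apply the min-max principle and to construct trial subspaces on $\widetilde\Gamma$ by extending test functions from $\Gamma$ across the glued vertex, using \emph{different} extensions in the $\delta'$-at-$v$ and the $\delta$-at-$v$ cases. Fix $k$ and choose a $k$-dimensional $F\subseteq\dom h$ with $h(f)\le\lambda_k(H)\int_\Gamma|f|^2$ for all $f\in F$; for each $f\in F$ I would produce $\widetilde f\in\dom\widetilde h$ with $\widetilde f|_\Gamma=f$, so that $\widetilde F:=\{\widetilde f:f\in F\}$ is again $k$-dimensional, and then compare Rayleigh quotients edge by edge and vertex by vertex.

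In the $\delta'$-at-$v$ case, extend $f$ by zero on all of $\widehat\Gamma$. Since a $\delta'$ condition imposes no continuity at $v$ (Lemma \ref{lem:forms}(iv)) and the zero function satisfies every Dirichlet-type condition $P_{w,\rm D}\widetilde F(w)=0$ at the vertices of $\widehat\Gamma$, we get $\widetilde f\in\dom\widetilde h$. Moreover $\widetilde h(\widetilde f)=h(f)$ identically: the new edges carry neither $L^2$- nor Dirichlet-energy, and the vertex term $\tfrac1{\beta_v}\big|\sum_jF_j(v)\big|^2$ at $v$ is unchanged because the attached edges contribute vanishing boundary values there (the anti-Kirchhoff case $\beta_v=0$ is handled the same way, the constraint $\sum_jF_j(v)=0$ being inherited). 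As $\int_{\widetilde\Gamma}|\widetilde f|^2=\int_\Gamma|f|^2$ as well, the min-max principle gives $\lambda_k(\widetilde H)\le\lambda_k(H)$ for every $k$, just as in the proof of Theorem \ref{thm:monotonicity}.

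In the $\delta$-at-$v$ case, $f$ is continuous at $v$ (Lemma \ref{lem:forms}(ii)), so extend it by the constant $f(v)$ on all of $\widehat\Gamma$. The point that makes $\widetilde f$ admissible is that a constant vector lies in $\ker P_{w,\rm D}=\ker\cQ_{\deg(w)}$ at every $w\in\widehat\cV\setminus\{\widehat v\}$ --- which is exactly why those vertices are assumed to carry $\delta$ conditions --- together with continuity at $v$. Since the $\delta$ vertex term at $v$ equals $\alpha_v|f(v)|^2$ regardless of $\deg(v)$, a direct computation gives
\begin{align*}
 \widetilde h(\widetilde f)=h(f)+|f(v)|^2\Big(\int_{\widehat\Gamma}\widehat q+\sum_{w\in\widehat\cV\setminus\{\widehat v\}}\alpha_w\Big)\le h(f)
\end{align*}
by the sign hypotheses, while $\int_{\widetilde\Gamma}|\widetilde f|^2=\int_\Gamma|f|^2+|f(v)|^2\,\ell_{\widehat\Gamma}\ge\int_\Gamma|f|^2$, where $\ell_{\widehat\Gamma}>0$ is the total length of $\widehat\Gamma$. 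Plugging this into the min-max principle yields the inequality.

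The step I expect to be the main obstacle is this very last one in the $\delta$-at-$v$ case. From $\widetilde h(\widetilde f)\le h(f)\le\lambda_k(H)\int_\Gamma|f|^2$ and $\int_{\widetilde\Gamma}|\widetilde f|^2\ge\int_\Gamma|f|^2$ one immediately gets $\widetilde h(\widetilde f)\le\lambda_k(H)\int_{\widetilde\Gamma}|\widetilde f|^2$ when $\lambda_k(H)\ge0$ (enlarging the denominator only helps), exactly as in Theorem \ref{thm:edge_length}; but for $\lambda_k(H)<0$ the denominator grows in the unfavourable direction and this estimate fails. Indeed, on a path carrying a single attractive $\delta$ interaction the negative ground-state eigenvalue rises when a pendant edge is attached, so in the $\delta$ case a restriction such as $\lambda_k(H)\ge0$ (as in Theorem \ref{thm:edge_length}) seems to be needed, whereas the $\delta'$ case is genuinely unrestricted. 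Sorting out exactly which $k$ the $\delta$-case inequality covers is the delicate point.
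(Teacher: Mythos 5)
Your construction is exactly the paper's: extend by zero on $\widehat\Gamma$ when a $\delta'$ coupling sits at $v$ and by the constant $f(v)$ when a $\delta$ coupling does, then compare the two forms term by term; the $\delta'$ half of your argument is complete and coincides with the paper's proof, and your identity $\widetilde h(\widetilde f)=h(f)+|f(v)|^2\bigl(\int_{\widehat\Gamma}\widehat q+\sum_{w}\alpha_w\bigr)$ in the $\delta$ case matches the paper's estimate $\widetilde h(\widetilde f)\le h(f)$. The obstacle you flag at the end is genuine, and it is not resolved in the paper either: the paper's proof concludes with the chain $\widetilde h(\widetilde f)\le h(f)\le\lambda_k(H)\int_\Gamma|f|^2\le\lambda_k(H)\int_{\widetilde\Gamma}|\widetilde f|^2$, whose final inequality combines $\int_{\widetilde\Gamma}|\widetilde f|^2\ge\int_\Gamma|f|^2$ with a tacit assumption $\lambda_k(H)\ge 0$, exactly the restriction appearing explicitly in Theorem \ref{thm:edge_length}. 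Your counterexample is correct: for the Laplacian on $[0,L]$ with the Robin condition $f'(0)=\alpha f(0)$, $\alpha<0$, and a Neumann condition at $L$, the negative eigenvalue $-k^2$ is determined by $k\tanh(kL)=-\alpha$, so $k$ decreases and $\lambda_1$ strictly increases as $L$ grows; attaching a Neumann pendant edge at the right endpoint (a $\delta$ vertex of strength $0$, new vertex of strength $0\le 0$, $\widehat q=0$) realizes this as an admissible instance of Theorem \ref{thm:attaching_graph} with $\lambda_1(\widetilde H)>\lambda_1(H)$. So in the $\delta$ case the conclusion must be restricted to those $k$ with $\lambda_k(H)\ge 0$ (or the hypotheses otherwise strengthened); you have located a defect in the statement and in the paper's own proof rather than a gap in your argument, while your $\delta'$ case is genuinely unrestricted because the zero extension leaves both numerator and denominator of the Rayleigh quotient unchanged.
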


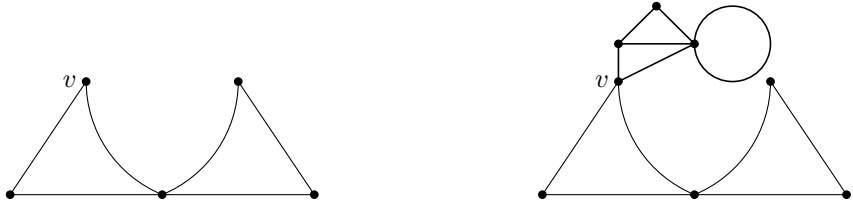
\begin{figure}[htb]
  \centering
  \begin{tikzpicture}
    \draw[fill] (0,0) circle(0.05);
    \draw[fill] (2,0) circle(0.05);
    \draw[fill] (1,1.5) circle(0.05) node[left]{$v$};
    \draw[fill] (4,0) circle(0.05);
    \draw[fill] (3,1.5) circle(0.05);
    \draw (1,1.5)--(0,0)--(2,0);
    \draw (2,0)--(4,0)--(3,1.5);
    \draw (1,1.5) arc(180:247.38:1.625);
    \draw (3,1.5) arc(0:-67.38:1.625);
    %\draw (0,0) arc(45:405:0.6);
    \begin{scope}[shift={(7,0)}]
      \draw[fill] (0,0) circle(0.05);
      \draw[fill] (2,0) circle(0.05);
      \draw[fill] (1,1.5) circle(0.05) node[left]{$v$};
      \draw[fill] (4,0) circle(0.05);
      \draw[fill] (3,1.5) circle(0.05);
      \draw (1,1.5)--(0,0)--(2,0);
      \draw (2,0)--(4,0)--(3,1.5);
      \draw (1,1.5) arc(180:247.38:1.625);
      \draw (3,1.5) arc(0:-67.38:1.625);
      %\draw (0,0) arc(45:405:0.6);
      \draw[fill] (2,2) circle(0.05);
      \draw[fill] (1.5,2.5) circle(0.05);
      \draw[fill] (1,2) circle(0.05);
      \draw[semithick] (1,1.5)--(2,2);
      \draw[semithick] (1,1.5)--(1,2);
      \draw[semithick] (1,2)--(2,2);
      \draw[semithick] (1.5,2.5)--(2,2);
      \draw[semithick] (1,2)--(1.5,2.5);
      \draw[semithick] (2.5,2) circle(0.5);
    \end{scope}
  \end{tikzpicture}
  \caption{The transformation of Theorem \ref{thm:attaching_graph}. Left: $\Gamma$, right: $\widetilde{\Gamma}$, where the graph $\widehat{\Gamma}$ (thick) is attached at $v$.}
  \label{fig:thm_attaching_graph}
\end{figure}

\begin{proof}    
 Let $h$ be the quadratic form associated with $H$ and $\widetilde h$ be the quadratic form associated with $\widetilde H$. Furthermore, let $F$ be a $k$-dimensional subspace of $\dom h$ such that
 \[h(f) \leq \lambda_k(H) \int_\Gamma |f|^2 \qquad \text{for all } f\in F.\]
 For $f\in F$ let $\widetilde{f}$ be the extension of $f$ by $0$ to $\widetilde{\Gamma}$ (i.e.\ $\widetilde{f} = 0$ on $\widehat\Gamma$) in the case of a $\delta'$ coupling at $v$, and by the constant value $f(v)$ to $\widetilde{\Gamma}$ (i.e.\ $\widetilde{f} = f(v)$ on $\widehat\Gamma$) in the case of a $\delta$ coupling at $v$. Moreover, let $\widetilde{F}$ be the space consisting of the extensions $\widetilde{f}$ of all $f \in F$. Then $\widetilde{F}$ is $k$-dimensional and $\widetilde{F}\subseteq\dom \widetilde{h}$.
 For $\widetilde f\in\widetilde F$ we observe
 \begin{align*}
 \widetilde{h}(\widetilde{f}) & = \int_\Gamma |f'|^2 + \int_\Gamma q|f|^2 + \int_{\widehat\Gamma} \widehat{q} |\widetilde{f}_{\widehat\Gamma}|^2 + \sum_{w\in \cV\setminus\{v\}} \langle \Lambda_{w} P_{w,\rm R}F(w),P_{w,\rm R}F(w)\rangle \\
 & \quad + \sum_{w\in\widehat\cV \setminus \{\widehat v\}}
  \langle \widehat\Lambda_{w} \widehat P_{w,\rm R}\widehat F(w),\widehat P_{w,\rm R}F(w)\rangle \\
  & \quad + \begin{cases} \frac{1}{\beta_v} \sum_{j = 1}^{\deg (v)} \widetilde F_j (v)  & \text{for a $\delta'$ condition at $v$},\\ \alpha_v |f (v)|^2 & \text{for a $\delta$ condition at $v$}. \end{cases}
 \end{align*}
 Note that
 \[\int_{\widehat\Gamma} \widehat {q} |\widetilde{f}|^2 \begin{cases} = 0 & \text{for a $\delta'$ condition at $v$},\\
 \leq 0 & \text{for a $\delta$ condition at $v$}.\end{cases}\] 
%  Now, exploiting the coupling conditions at $v$ and at the vertices of $\widehat{v}\in\widehat{\cV}$, cf.\ Lemma \ref{lem:forms}, we obtain
%  \[\langle \widetilde\Lambda_{v} \widetilde P_{v,\rm R}\widetilde F(v),\widetilde P_{v,\rm R}\widetilde F(v)\rangle = \langle \Lambda_{v}  P_{v,\rm R} F(v), P_{v,\rm R} F(v)\rangle,
%  \]
%  and
%  \[\langle \widehat\Lambda_{\widehat v} \widehat  P_{\widehat v,\rm R}\widehat  F(\widehat v),\widehat  P_{\widehat v,\rm R}\widehat  F(\widehat v)\rangle = \begin{cases} 0 & \text{$\delta'$ condition at $v$},\\
%  \alpha_{\widehat v}|f(v)|^2 & \text{$\delta$ condition at $v$} \end{cases} \leq 0.\]  
Moreover, note that the vertex terms at vertices $w \in \widehat \cV \setminus \{\widehat v\}$ are
 \[\langle \widehat\Lambda_{w} \widehat P_{w,\rm R}\widehat F(w),\widehat P_{w,\rm R}F(w)\rangle \begin{cases} = 0 & \text{for a $\delta'$ condition at $v$},\\
 \leq 0 & \text{for a $\delta$ condition at $v$},\end{cases}\]
 according to the assumptions of the theorem. Thus,
 \[\widetilde{h}(\widetilde{f}) \leq  h(f) \leq \lambda_k(H) \int_\Gamma |f|^2 \leq \lambda_k(H) \int_{\widetilde{\Gamma}} |\widetilde f|^2 \quad\text{for all }\widetilde{f}\in \widetilde{F}.\]
 The min-max principle yields the assertion.
\end{proof}

If we have a $\delta$ type condition at the gluing vertex $v$ then a condition on the coupling coefficients at the further vertices of the newly attached graph is necessary, as the following example shows.

\begin{example}[{\cite[Example 3.6]{RS20}}]
\label{ex:attach_new_vertex}
Let $\Gamma$ be the graph consisting of two vertices $v_0$ and $v_1$ and one edge connecting the two vertices, which is parametrised by the interval $[0,1]$; cf.~Figure~\ref{fig:attach_new_vertex}.
\begin{figure}[htb]
  \centering
  \begin{tikzpicture}
    \draw[fill] (0,0) circle(0.05) node[above]{$v_0$};
    \draw (0,0) node[below]{$0$};
    \draw[fill] (1,0) circle(0.05) node[above]{$v_1$};
    \draw (1,0) node[below]{$1$};  
    \draw (0,0)--(1,0);
    \begin{scope}[shift={(5,0)}]
     \draw[fill] (0,0) circle(0.05) node[above]{$v_0$};
      \draw (0,0) node[below]{$0$};
      \draw[fill] (1,0) circle(0.05) node[above]{$v_1$};
      \draw (1,0) node[below]{$1$}; 
      \draw (0,0)--(1,0);
      \draw[fill] (2,0) circle(0.05) node[above]{$v_2$};
      \draw (2,0) node[below]{$2$};
      \draw (1,0)--(2,0);
    \end{scope}
  \end{tikzpicture}
  \caption{The graphs in Example \ref{ex:attach_new_vertex}. Left: $\Gamma$, right: $\widetilde{\Gamma}$.}
  \label{fig:attach_new_vertex}
\end{figure}
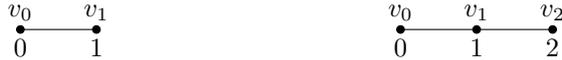
Let us impose continuity-Kirchhoff conditions at $v_0$ and~$v_1$. Since the degree is one in both cases, these conditions correspond to Neumann boundary conditions for $H$ at $0$ and at $1$. Hence, $\lambda_k (H) = (k-1)^2\pi^2$ for all $k\in\N$. Now, let us add an edge of length one, which connects $v_1$ to a new vertex $v_2$, see Figure \ref{fig:attach_new_vertex}.
At the vertex $v_2$ we impose either the condition $f' (2) + \alpha f (2) = 0$ for some $\alpha > 0$ or a Dirichlet condition. Then the spectrum of $\widetilde H$ is positive and, hence,
\begin{align*}
 \lambda_1 (\widetilde H) > 0 = \lambda_1 (H).
\end{align*}
\end{example}

\subsection{Inserting a graph at a vertex}

In this section we consider the graph manipulation of inserting a metric graph into a vertex of another given metric graph. This transformation was introduced in \cite[Section 3]{BKKM19} and its influence on the eigenvalues of the Laplacian with $\delta$ (and Dirichlet) vertex conditions was studied there. The operation may formally be defined as follows.

\begin{definition}
Let $v_0$ be one of the vertices of a compact metric graph $\Gamma$, and let $\Gamma'$ be another compact metric graph. Denote by $\cE_{v_0}$ the set of edges incident to $v_0$ in $\Gamma$. We say that the graph $\widetilde \Gamma$ is obtained by {\em inserting $\Gamma'$ into $\Gamma$ at $v_0$} if it is formed by removing $v_0$ from $\Gamma$ and attaching each of the edges $e \in \cE_{v_0}$ to one of the vertices of $\Gamma'$; cf.\ Figure \ref{fig:insertgraph}.
\end{definition}

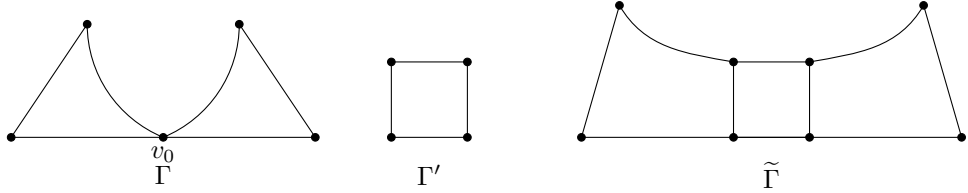
\begin{figure}[htb]
  \centering
  \begin{tikzpicture}
    \draw[fill] (0,0) circle(0.05);
    \draw[fill] (2,0) circle(0.05) node[below]{$v_0$};
    \draw[fill] (1,1.5) circle(0.05);
    \draw[fill] (4,0) circle(0.05);
    \draw[fill] (3,1.5) circle(0.05);
    \draw (1,1.5)--(0,0)--(2,0);
    \draw (2,0)--(4,0)--(3,1.5);
    \draw (1,1.5) arc(180:247.38:1.625);
    \draw (3,1.5) arc(0:-67.38:1.625);
    \draw (2,-0.5) node[]{$\Gamma$};
    
    \begin{scope}[shift={(5,0)}]
      \draw[fill] (0,0) circle(0.05);
      \draw[fill] (1,0) circle(0.05);
      \draw[fill] (1,1) circle(0.05);
      \draw[fill] (0,1) circle(0.05);
      \draw (0,0)--(1,0)--(1,1)--(0,1)--(0,0);
      \draw (0.5,-0.5) node[]{$\Gamma'$};
    \end{scope}

    \begin{scope}[shift={(7.5,0)}]
      \draw[fill] (0,0) circle(0.05);
      \draw[fill] (2,0) circle(0.05);
      \draw[fill] (0.5,1.75) circle(0.05);
      \draw[fill] (5,0) circle(0.05);
      \draw[fill] (4.5,1.75) circle(0.05);
      \draw[fill] (3,0) circle(0.05);
      \draw[fill] (3,1) circle(0.05);
      \draw[fill] (2,1) circle(0.05);
      \draw (2,0)--(3,0)--(3,1)--(2,1)--(2,0);
      \draw (0.5,1.75)--(0,0)--(2,0);
      \draw (2,0)--(5,0)--(4.5,1.75);
      \draw (0.5,1.75) to[out=-60, in=170] (2,1);
      \draw (4.5,1.75) to[out=240, in=10] (3,1);
      \draw (2.5,-0.5) node[]{$\widetilde\Gamma$};
    \end{scope}
  \end{tikzpicture}
  \caption{Inserting the graph $\Gamma'$ at the vertex $v_0\in \Gamma$.}
  \label{fig:insertgraph}
\end{figure}

With this operation, the following theorem holds for $\delta$ vertex conditions. It was shown for the potential-free case of the Laplacian in \cite[Theorem 3.10]{BKKM19}. Here we present a slightly different proof and allow the presence of a potential $q$ as well as more general vertex conditions at the vertices that are not affected by the manipulation.

\begin{theorem}
Let $\Gamma$ and $\Gamma'$ be compact metric graphs and let $\widetilde \Gamma$ be formed by inserting $\Gamma'$ into $\Gamma$ at a given vertex $v_0$ of $\Gamma$. Furthermore, let $H$ be a Schrödinger operator on $\Gamma$ with a real potential $q \in L^\infty (\Gamma)$, arbitrary self-adjoint vertex conditions at all vertices except $v_0$ and a $\delta$ vertex condition with strength $\alpha := \alpha_{v_0}$ at $v_0$. Consider the Schrödinger operator $\widetilde H$ on $\widetilde \Gamma$ with the same vertex conditions at all vertices of $\Gamma$ except $v_0$ and the same potential on all edges of $\Gamma$; on the edges of $\Gamma'$, potentials $q_e$ are placed such that $\int_{\Gamma'} q \leq 0$, and on the remaining vertex set $\cV' = \cV (\widetilde \Gamma) \setminus (\cV (\Gamma) \setminus \{ v_0 \})$, $\delta$ vertex conditions with coupling coefficients $\alpha_v$ are imposed such that 
\begin{align*}
 \sum_{v \in \cV'} \alpha_v \leq \alpha.
\end{align*}
Then 
\begin{align*}
 \lambda_k (\widetilde H) \leq \lambda_k (H) 
\end{align*}
holds for all $k \in \N$.
\end{theorem}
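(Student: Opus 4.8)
The plan is to use the min-max principle together with the construction of suitable test functions on $\widetilde\Gamma$ from eigenfunctions (or min-max-optimal subspaces) on $\Gamma$, in the same spirit as the proofs of Theorems \ref{thm:monotonicity} and \ref{thm:attaching_graph}. Fix $k\in\N$ and choose a $k$-dimensional subspace $F\subseteq\dom h$ with $h(f)\le\lambda_k(H)\int_\Gamma|f|^2$ for all $f\in F$. For $f\in F$ we build $\widetilde f\in\dom\widetilde h$ by keeping $f$ on all edges of $\Gamma$ (these edges survive in $\widetilde\Gamma$, only their incidence at $v_0$ is redistributed among the vertices of $\Gamma'$), and by extending $\widetilde f$ to be constantly equal to the vertex value $f(v_0)$ on all of $\Gamma'$. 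This is well defined: since a $\delta$ condition holds at $v_0$, $f$ takes a common value $f(v_0)$ on all edge-ends meeting $v_0$, so the constant extension matches the boundary values of $f$ at every vertex of $\Gamma'$ to which such an edge is attached, and it trivially matches at the internal vertices of $\Gamma'$. Hence $\widetilde f$ is continuous across all new $\delta$-vertices, satisfies the $\delta$ conditions there (its derivative from the $\Gamma'$-edges is $0$), and lies in $\widetilde H^1(\widetilde\Gamma)$; the map $f\mapsto\widetilde f$ is linear and injective, so $\widetilde F:=\{\widetilde f:f\in F\}$ is $k$-dimensional.

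Next I would compare Rayleigh quotients. On the edges of $\Gamma$ the Dirichlet part $\int|{\widetilde f}'|^2$ and the potential part $\int q|\widetilde f|^2$ are unchanged. On $\Gamma'$ we have $\widetilde f'\equiv 0$, so the extra kinetic term vanishes, and the extra potential term is $|f(v_0)|^2\int_{\Gamma'}q\le 0$ by hypothesis. For the vertex terms: at every vertex $w\in\cV(\Gamma)\setminus\{v_0\}$ the term is unchanged; at the vertices $v\in\cV'$ the $\delta$ term equals $\alpha_v|f(v_0)|^2$, summing to $\big(\sum_{v\in\cV'}\alpha_v\big)|f(v_0)|^2\le\alpha|f(v_0)|^2$, which is exactly the single $\delta$-term that $H$ had at $v_0$ (up to the sign/size of $|f(v_0)|^2$, which is the common value). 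Collecting, $\widetilde h(\widetilde f)\le h(f)$. For the $L^2$ norms, $\int_{\widetilde\Gamma}|\widetilde f|^2=\int_\Gamma|f|^2+|f(v_0)|^2\,\mathrm{vol}(\Gamma')\ge\int_\Gamma|f|^2$. Therefore
\begin{align*}
 \widetilde h(\widetilde f)\le h(f)\le\lambda_k(H)\int_\Gamma|f|^2\le\lambda_k(H)\int_{\widetilde\Gamma}|\widetilde f|^2\qquad\text{for all }\widetilde f\in\widetilde F,
\end{align*}
and the min-max principle gives $\lambda_k(\widetilde H)\le\lambda_k(H)$.

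The one point requiring care — and the likely main obstacle — is the bookkeeping of the $\delta$-vertex term at $v_0$ versus at the new vertices $\cV'$. One must be careful that the inequality $\lambda_k(H)\int_\Gamma|f|^2\le\lambda_k(H)\int_{\widetilde\Gamma}|\widetilde f|^2$ uses $\lambda_k(H)\ge 0$; unlike Theorem \ref{thm:monotonicity}, here there is no restriction to non-negative eigenvalues in the statement, so I would either add the hypothesis $\lambda_k(H)\ge 0$, or handle the case $h(f)<0$ separately (there $\widetilde h(\widetilde f)\le h(f)<0\le\lambda_k(H)\int_{\widetilde\Gamma}|\widetilde f|^2$ fails unless $\lambda_k(H)\ge 0$, so the restriction genuinely seems needed, cf.\ the remarks after Theorem \ref{thm:edge_length}). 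Assuming the analogous non-negativity caveat as in the cited results, the argument above is complete; no delicate analysis is involved beyond verifying that the constant extension lands in the form domain, which is immediate from the $\delta$-condition at $v_0$.
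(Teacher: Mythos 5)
Your construction is exactly the paper's: extend a min-max optimal $k$-dimensional subspace of $\dom h$ constantly by the value $f(v_0)$ onto $\Gamma'$, note that the new kinetic term vanishes while the new potential and vertex terms contribute $|f(v_0)|^2\bigl(\int_{\Gamma'}q+\sum_{v\in\cV'}\alpha_v\bigr)\leq\alpha|f(v_0)|^2$, so that $\widetilde h(\widetilde f)\leq h(f)$ and $\int_{\widetilde\Gamma}|\widetilde f|^2\geq\int_\Gamma|f|^2$, and finish with the min-max principle. Up to the choice of $F$ (the paper takes the span of the first $k$ eigenspaces, you take any optimal subspace) the two arguments coincide.

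The caveat you raise at the end is not a defect of your write-up but a genuine gap, and it is present in the paper's own proof as well: the final link $\lambda_k(H)\int_\Gamma|f|^2\leq\lambda_k(H)\int_{\widetilde\Gamma}|\widetilde f|^2$ is written there without comment and is valid only when $\lambda_k(H)\geq 0$. Without that restriction the conclusion can actually fail. For instance, let $\Gamma$ be a single edge of length $1$ with a Neumann condition at one endpoint and a $\delta$ condition of strength $\alpha<0$ at the other endpoint $v_0$, and let $\Gamma'$ be a single edge of length $\ell>0$; insert $\Gamma'$ at $v_0$, put the full strength $\alpha$ at the attachment vertex and strength $0$ at the other new vertex, and take $q=0$ throughout. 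The ground states are $\lambda_1(H)=-k_0^2$ with $k_0\tanh k_0=-\alpha$ and $\lambda_1(\widetilde H)=-k_1^2$ with $k_1\bigl(\tanh k_1+\tanh(k_1\ell)\bigr)=-\alpha$; since the second secular function dominates the first, $k_1<k_0$ and hence $\lambda_1(\widetilde H)>\lambda_1(H)$. So your instinct is correct: the statement should carry the restriction to indices $k$ with $\lambda_k(H)\geq 0$, exactly as in Theorem \ref{thm:edge_length}, and with that restriction your argument (splitting off the case $h(f)<0$, where the Rayleigh quotient of $\widetilde f$ is negative and hence at most $\lambda_k(H)$) is complete.
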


\begin{proof}
Let $F$ denote the linear span of the spaces $\ker (H - \lambda_j (H))$ for $1 \leq j \leq k$. Then $\dim F \geq k$, and
\begin{align*}
 h (f) \leq \lambda_k (H) \int_\Gamma |f |^2
\end{align*}
holds for all $f \in F$. Moreover, let $\widetilde F$ be the space formed by extending all $f \in F$ constantly to all of $\widetilde \Gamma$ by identifying $\Gamma$ as a subgraph of $\widetilde \Gamma$ in the natural way. Then also $\dim \widetilde F \geq k$, $\widetilde F$ belongs to the domain of the quadratic form $\widetilde h$ associated with $\widetilde H$, and for each $\widetilde f \in \widetilde F$ we have
\begin{align*}
 \widetilde h (\widetilde f) & = \left( h (f) - \alpha |f (v_0)|^2 \right) + |f (v_0)|^2 \left( \int_{\Gamma'} q + \sum_{v \in \cV'} \alpha_v \right) \\
 & \leq h (f) \leq \lambda_k (H) \int_\Gamma |f |^2  \leq \lambda_k (H) \int_{\widetilde \Gamma} |f|^2.
\end{align*}
This implies the assertion of the theorem.
\end{proof}

The case of a $\delta'$ vertex condition at the vertex $v_0$ at which the additional graph $\Gamma'$ is inserted is, on the one hand, less restrictive: arbitrary self-adjoint vertex conditions and arbitrary potentials on the edges may be imposed on the inserted graph. On the other hand, a sign condition on the coupling conditions at the affected vertices is required.

\begin{theorem}
\label{thm:insertDelta'}
Let $\Gamma$ and $\Gamma'$ be compact metric graphs and let $\widetilde \Gamma$ be formed by inserting $\Gamma'$ into $\Gamma$ at a given vertex $v_0$ of $\Gamma$. Furthermore, let $H$ be a Schrödinger operator on $\Gamma$ with real potential $q \in L^\infty (\Gamma)$, arbitrary self-adjoint vertex conditions at all vertices except $v_0$ and a $\delta'$ vertex condition with strength $\beta := \beta_{v_0} < 0$ at $v_0$. Consider the Schrödinger operator $\widetilde H$ on $\widetilde \Gamma$ with the same vertex conditions at all vertices of $\Gamma$ except $v_0$ and the same potential on all edges of $\Gamma$; on the edges of $\Gamma'$, arbitrary potentials $q_e$ are placed, on the vertex set $\widehat \cV$ obtained from splitting $v_0$, $\delta'$ vertex conditions with strengths $\beta_v < 0$ are imposed such that
\begin{align}\label{eq:sumBeta}
 \sum_{v \in \widehat \cV} \beta_v = \beta,
\end{align}
and on the remaining vertex set $\cV' = \cV (\widetilde \Gamma) \setminus (\cV (\Gamma) \setminus \{ v_0 \})$, arbitrary self-adjoint vertex conditions are imposed. Then 
\begin{align*}
 \lambda_k (\widetilde H) \leq \lambda_k (H) 
\end{align*}
holds for all $k \in \N$.
\end{theorem}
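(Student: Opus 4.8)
The plan is to imitate the variational proof of Theorem~\ref{thm:monotonicity}: I bound $\lambda_k(\widetilde H)$ from above by the min-max principle, using as trial functions on $\widetilde\Gamma$ the extensions by zero on the inserted graph $\Gamma'$ of a suitable $k$-dimensional space of trial functions on $\Gamma$. The one genuinely new ingredient, compared with the $\delta'$ edge-attachment statement, is a Cauchy--Schwarz estimate comparing the single $\delta'$ vertex term of $H$ at $v_0$ with the collection of $\delta'$ vertex terms of $\widetilde H$ at the vertices of $\widehat\cV$; this is precisely where the equality $\sum_{v\in\widehat\cV}\beta_v=\beta$ and the negativity of $\beta$ and of all $\beta_v$ get used.

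Concretely, fix $k\in\N$, let $h$ and $\widetilde h$ be the quadratic forms of $H$ and $\widetilde H$, and choose a $k$-dimensional subspace $F\subseteq\dom h$ with $h(f)\le\lambda_k(H)\int_\Gamma|f|^2$ for all $f\in F$ (e.g.\ the span of the first $k$ eigenfunctions of $H$). For $f\in F$ let $\widetilde f$ be the function on $\widetilde\Gamma$ which coincides with $f$ on the edges inherited from $\Gamma$ and vanishes identically on every edge of $\Gamma'$. Since a $\delta'$ condition imposes no constraint on the form domain (Lemma~\ref{lem:forms}) and since $\widetilde f\equiv0$ on $\Gamma'$ — so that in particular the Dirichlet part of every vertex condition at a vertex of $\Gamma'$ is trivially satisfied — one checks $\widetilde f\in\dom\widetilde h$; moreover $f\mapsto\widetilde f$ is linear and injective (restriction to $\Gamma$ recovers $f$), so $\widetilde F:=\{\widetilde f:f\in F\}$ is $k$-dimensional and $\int_{\widetilde\Gamma}|\widetilde f|^2=\int_\Gamma|f|^2$.

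Next I would compute $\widetilde h(\widetilde f)$. The edge contributions $\int_{\Gamma'}|\widetilde f'|^2$ and $\int_{\Gamma'}q|\widetilde f|^2$ vanish, and so do all vertex terms at vertices of $\Gamma'$ not lying in $\widehat\cV$, because there the whole boundary-value vector of $\widetilde f$ is zero; the vertex terms at the vertices of $\cV(\Gamma)\setminus\{v_0\}$ are unchanged. At a vertex $v\in\widehat\cV$ the boundary-value vector of $\widetilde f$ has zero entries in the positions corresponding to edges of $\Gamma'$ and, in the positions corresponding to the edges $e\in\cE_{v_0}$ reattached to $v$, the same values that constituted $F(v_0)$ for $H$; writing $a_v$ for the sum of those values over the edges reattached to $v$, and $S:=\sum_{j=1}^{\deg(v_0)}F_j(v_0)$, we have $\sum_{v\in\widehat\cV}a_v=S$ and, by Lemma~\ref{lem:forms},
\begin{align*}
 \widetilde h(\widetilde f)-h(f)=\sum_{v\in\widehat\cV}\frac{1}{\beta_v}|a_v|^2-\frac{1}{\beta}|S|^2 .
\end{align*}
It remains to see this difference is $\le0$. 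Since $\beta<0$ and every $\beta_v<0$, Cauchy--Schwarz applied to $\bigl(\sqrt{-\beta_v}\bigr)_{v\in\widehat\cV}$ and $\bigl(a_v/\sqrt{-\beta_v}\bigr)_{v\in\widehat\cV}$ gives
\begin{align*}
 |S|^2=\Bigl|\sum_{v\in\widehat\cV}a_v\Bigr|^2\le\Bigl(\sum_{v\in\widehat\cV}(-\beta_v)\Bigr)\Bigl(\sum_{v\in\widehat\cV}\frac{|a_v|^2}{-\beta_v}\Bigr)=(-\beta)\sum_{v\in\widehat\cV}\frac{|a_v|^2}{-\beta_v},
\end{align*}
the last equality being \eqref{eq:sumBeta}. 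Dividing by $\beta<0$ reverses the inequality and yields $\beta^{-1}|S|^2\ge\sum_{v\in\widehat\cV}\beta_v^{-1}|a_v|^2$, i.e.\ $\widetilde h(\widetilde f)\le h(f)$. Hence $\widetilde h(\widetilde f)/\int_{\widetilde\Gamma}|\widetilde f|^2\le h(f)/\int_\Gamma|f|^2\le\lambda_k(H)$ for every nonzero $\widetilde f\in\widetilde F$, and the min-max principle gives $\lambda_k(\widetilde H)\le\lambda_k(H)$.

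I do not expect a serious obstacle; the argument is a short Rayleigh-quotient comparison. The point to handle carefully is the bookkeeping of which boundary values of $f$ at $v_0$ get redistributed to which vertex of $\widehat\cV$, together with the observation that the $\delta'$ form term at $v\in\widehat\cV$ therefore only involves the partial sum $a_v$. Once this is in place, the needed inequality $\sum_{v}\beta_v^{-1}|a_v|^2\le\beta^{-1}|S|^2$ is exactly Cauchy--Schwarz, and it is the \emph{equality} constraint \eqref{eq:sumBeta} (in contrast to the inequality $\sum_v\alpha_v\le\alpha$ in the $\delta$ case) together with $\beta,\beta_v<0$ that makes it work.
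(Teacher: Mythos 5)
Your proof is correct, but it takes a different route from the paper's. The paper factors the insertion into two previously established surgery steps: first it splits $v_0$ into the vertex set $\widehat\cV$ with $\delta'$ strengths $\beta_v<0$ summing to $\beta$, which decreases eigenvalues by Theorem~\ref{thm:joining_delta'}\,(ii) read in reverse (joining vertices with negative $\delta'$ strengths increases eigenvalues), and then it attaches $\Gamma'$ to the split graph by repeated application of Theorem~\ref{thm:monotonicity} and Theorem~\ref{thm:attaching_graph}. You instead perform both steps in one stroke: a single zero extension onto $\Gamma'$ together with the Cauchy--Schwarz estimate
\[
 \Bigl|\sum_{v\in\widehat\cV}a_v\Bigr|^2\le\Bigl(\sum_{v\in\widehat\cV}(-\beta_v)\Bigr)\sum_{v\in\widehat\cV}\frac{|a_v|^2}{-\beta_v},
\]
which, after dividing by $\beta<0$ and using \eqref{eq:sumBeta}, gives $\sum_{v\in\widehat\cV}\beta_v^{-1}|a_v|^2\le\beta^{-1}|S|^2$. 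This inequality is exactly the quantitative content that, in the paper, is buried inside the ``case distinction'' in the proof of Theorem~\ref{thm:joining_delta'}; iterating the two-vertex version of that theorem reproduces your $n$-term Cauchy--Schwarz. What your approach buys is a self-contained argument that makes explicit where the equality constraint \eqref{eq:sumBeta} and the strict negativity of $\beta$ and the $\beta_v$ enter; what the paper's approach buys is brevity and the reuse of its surgery toolkit. Your bookkeeping (zero entries of the boundary vector on the $\Gamma'$-edges at each $v\in\widehat\cV$, so that the $\delta'$ form term only sees the partial sum $a_v$, and $\sum_v a_v=S$) is the right way to organize the direct computation, and the verification that the zero extension lies in $\dom\widetilde h$ is handled correctly.
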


\begin{proof}
Let $\widehat \Gamma$ be the graph obtained from $\Gamma$ by splitting $v_0$ into a set of vertices $\widehat \cV$, and let $\widehat H$ be the Schrödinger operator on $\widehat \Gamma$ with the same potentials on the edges (after identification) and the same vertex conditions at all vertices in $\cV (\widehat \Gamma) \setminus \widehat \cV$. Moreover, at the new vertices in $\widehat \cV$, $\delta'$ vertex conditions with strengths $\beta_v < 0$ are imposed which satisfy \eqref{eq:sumBeta}. Then, by Theorem \ref{thm:joining_delta'} below, 
\begin{align*}
 \lambda_k (\widehat H) \leq \lambda_k (H)
\end{align*}
for all $k \in \N$. Furthermore, attaching $\Gamma'$ to $\widehat \Gamma$ in such a way that the resulting graph is $\widetilde \Gamma$, and imposing the desired vertex conditions leads to the operator $\widetilde H$ defined in the theorem, and 
\begin{align*}
 \lambda_k (\widetilde H) \leq \lambda_k (\widehat H)
\end{align*}
follows from a repeated application of Theorem \ref{thm:monotonicity} respectively Theorem \ref{thm:attaching_graph}. This completes the proof.
\end{proof}

\subsection{Shrinking edge lengths to zero}

Here we review some results from~\cite{BerkolaikoLatushkinSukhtaiev2019}, where shrinkage of edge lengths to zero has been considered. More precisely, in \cite{BerkolaikoLatushkinSukhtaiev2019} the natural limit vertex conditions when some edge lengths shrink to zero were identified, and convergence of the corresponding Schrödinger operators was shown under certain conditions. For simplicity of presentation we do restrict ourselves here to the potential-free case of the Laplacian, $q = 0$ identically. 

We change now slightly the perspective in the following sense. Up to now, we viewed Schr\"odinger operators on metric graphs as having coupling conditions at the vertices, where the boundary values of functions came from all edges adjacent to a given vertex. Now, we focus on each edge separately such that functions in $\widetilde{H}^2$ have two boundary values on each edge for the function as well as for the (inward) derivative, one for the left endpoint and one for the right endpoint, respectively. Since the sum of the vertex degrees is twice the number of edges for each graph, taking all boundary values at all vertices corresponds to taking all boundary values for all edges, so this change of perspective just yields a reordering of the boundary values (now sorted by edges and not by vertices).

In order to formulate a variant of the main result of \cite{BerkolaikoLatushkinSukhtaiev2019}, some notation has to be introduced.
For a compact metric graph $\Gamma$ with total number of edges $E =  |\cE|$, let $\cE_0 \subseteq \cE$ be a set of selected edges, which we are going to shrink to zero. For each edge $e$ denote by $l (e)$ and $r (e)$ the left, respectively right, end point of the interval with which $e$ is identified, and let
\begin{align*}
 \partial \Gamma := \bigcup_{e \in \cE} \{l (e), r (e) \};
\end{align*}
we distinguish all end points, even if they possibly correspond to the same points on the real line according to the parametrizations of the edges. Then for any function $f \in \widetilde H^1 (\Gamma)$ we may consider a vector
\begin{align*}
 f |_{\partial \Gamma} \in \C^{2 E}
\end{align*}
of evaluations of $f$ at all end points of all edges. Note that $f|_{\partial \Gamma}$ describes the same boundary values as $(F(v))_{v\in \cV}$, just sorted differently. Especially, if $f \in \dom H$ for some self-adjoint Schrödinger operator $H$ as in Proposition \ref{prop:SAconditions}, then the vertex conditions may be expressed in terms of the vectors $f |_{\partial \Gamma}$ and $f' |_{\partial \Gamma}$ where, in the latter case, the derivatives are taken in inward direction. Note as well that $f' |_{\partial \Gamma}$ describes the same boundary values for the derivatives as $(F'(v))_{v\in \cV}$, just sorted differently. Furthermore, define
\begin{align*}
    D_0 & := D_0(\cE_0) := \{F\in \C^{2 E}:\; F_{l (e)} = F_{r (e)},\; e\in \cE_0\},\\
    N_0 & := N_0(\cE_0) := \{F'\in \C^{2 E}:\; F'_{l(e)} = -F'_{r(e)},\; e\in \cE_0\}.
\end{align*}

In the following we will consider a decomposition of the edge set of $\Gamma$ into $\cE = \cE_+\cup \cE_0$ with $\cE_+ \cap \cE_0= \emptyset$ and shrink the lengths of the edges in $\cE_0$ to zero, while the edge lengths of the edges in $\cE_+$ may also change but will be sent to a positive limit. We let $\Gamma_+$ be the subgraph of $\Gamma$ constituted by the edge set $\cE_+$ and indicate the dependence of $\Gamma$ on the length function by $\Gamma (L)$; if $L$ is chosen formally such that $L (e) = 0$ for all $e \in \cE_0$, then $\Gamma (L) = \Gamma_+$.

Let us consider the sequence of self-adjoint operators $H (L)$, where $H (L)$ acts as the Laplacian on $\Gamma (L)$, $L (e) > 0$ for all $e \in \cE$, and all operators $H (L)$ are equipped with the same vertex conditions. The natural limit operator $\widetilde H$ of $H (L)$ if $L \to \widetilde L$, where $\widetilde L (e) = 0$ for all $e \in \cE_0$ and $\widetilde L (e) > 0$ for all $e \in \cE_+$ is the Laplacian defined on the restrictions to $\Gamma_+$ of all functions $f$ satisfying the chosen vertex conditions, such that, in addition,
\begin{align}\label{eq:limCond}
 f |_{\partial \Gamma} \in D_0 \quad \text{and} \quad f' |_{\partial \Gamma} \in N_0.
\end{align}

The following hypothesis on the vertex conditions of a self-adjoint Laplacian $H$ on a metric graph will be imposed for the next theorem.

\begin{hypothesis}\label{hyp}
Suppose that for all $f\in \dom H$ we have that if $f |_{\partial \Gamma} \in D_0$ and $f' |_{\partial \Gamma} \in N_0$ and if $f |_{\partial \Gamma_+} = f' |_{\partial \Gamma_+} = 0$, then $f |_{\partial \Gamma} = 0$.
\end{hypothesis}

For the special case of the Laplacian, the following theorem collects the results in \cite[Theorems 3.1, 3.5, 3.6]{BerkolaikoLatushkinSukhtaiev2019}, where the coupling conditions were formulated in terms of Lagrangian subspaces for the values at the vertices; see also \cite{SchubertSeifertVoigtWaurick2015}.

\begin{theorem}
% [{cf.\ \cite[Theorems 3.1, 3.5, 3.6]{BerkolaikoLatushkinSukhtaiev2019}}]
\label{thm:shrinkage}
Let $\Gamma = \Gamma (L)$ be a compact metric graph
% , $q\in L^\infty(\Gamma)$,
and let $H (L)$ be the Laplacian in $L^2 (\Gamma (L))$ subject to arbitrary, $L$-independent self-adjoint coupling conditions at each vertex. Let Hypothesis \ref{hyp} be satisfied
% , let $\widetilde{q}:= (q_e)_{e\in \cE_+}$,
and let $\widehat H$ be the Laplacian defined on the restrictions of functions satisfying the original vertex conditions as well as \eqref{eq:limCond}. Then $\widetilde{H}$ is self-adjoint, $H (L)$ converges to $\widetilde{H}$ in generalized norm resolvent sense as $L \to \widetilde L$, cf.\ Remark \ref{rem:gNRS}, and the spectrum of $H$ converges to the spectrum of $\widetilde{H}$ (in the Hausdorff sense of multisets) as the edge lengths of $\cE_0$ shrink to zero.
\end{theorem}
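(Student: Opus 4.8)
The plan is to prove the three assertions—self-adjointness of $\widetilde H$, generalized norm resolvent convergence of $H(L)$ to $\widetilde H$, and Hausdorff convergence of the spectra—by reducing everything to the abstract framework used in \cite{BerkolaikoLatushkinSukhtaiev2019}, where coupling conditions are encoded by Lagrangian subspaces of the boundary data space. First I would fix, once and for all, the edge-by-edge identification $f\mapsto f|_{\partial\Gamma}\in\C^{2E}$ and $f\mapsto f'|_{\partial\Gamma}\in\C^{2E}$ introduced above, so that the original $L$-independent self-adjoint coupling conditions correspond to a fixed Lagrangian subspace $\mathcal{X}\subseteq\C^{2E}\times\C^{2E}$ with respect to the symplectic form $\omega(\,(F,F'),(G,G')\,)=\langle F',G\rangle-\langle F,G'\rangle$; self-adjointness of each $H(L)$ is then the standard Kostrykin–Schrader/Lagrangian description. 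The candidate limit operator $\widetilde H$ is the Laplacian on $\Gamma_+$ whose boundary space is obtained by intersecting $\mathcal{X}$ with the subspace cut out by $f|_{\partial\Gamma}\in D_0$, $f'|_{\partial\Gamma}\in N_0$ and then projecting onto the $\partial\Gamma_+$-coordinates. The first step is to check that this projected-and-restricted space is again a Lagrangian subspace of the smaller boundary data space $\C^{2E_+}\times\C^{2E_+}$; isotropy is automatic from isotropy of $\mathcal{X}$ together with the fact that $D_0$ and $N_0$ are mutually $\omega$-orthogonal on the $\cE_0$-block, and maximality (hence self-adjointness of $\widetilde H$) is precisely where Hypothesis \ref{hyp} enters: it guarantees that the reduction does not lose dimensions, i.e. that no nonzero element of the kernel survives the projection, so the dimension count $\dim = E_+ + \tfrac12(\text{something})$ works out to the Lagrangian value. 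I would make this dimension bookkeeping explicit, since it is the linear-algebraic heart of why Hypothesis \ref{hyp} is the right condition.

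Next I would address convergence. The natural route is to transport all operators to a fixed Hilbert space. Rescaling each edge $e\in\cE$ of $\Gamma(L)$ to the unit interval via the unitary from Theorem \ref{thm:scaling_length} turns $H(L)$ into an operator on the fixed space $\bigoplus_{e\in\cE}L^2(0,1)$, but now with $L$-dependent coefficients: on edge $e$ the Laplacian becomes $-L(e)^{-2}\,d^2/dx^2$, i.e. a second-order operator whose leading coefficient blows up like $L(e)^{-2}\to\infty$ on the shrinking edges $e\in\cE_0$ and converges to $\widetilde L(e)^{-2}>0$ on $e\in\cE_+$. This is exactly the singular-perturbation picture: on the $\cE_0$-edges the quadratic form $L(e)^{-2}\int_0^1|g'|^2$ forces, in the limit, $g$ to be constant on each such edge and the inward derivatives to cancel—which is the content of the limit conditions \eqref{eq:limCond}. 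I would then invoke the convergence theorem of \cite{BerkolaikoLatushkinSukhtaiev2019} (their Theorems 3.1, 3.5, 3.6), after verifying that its hypotheses hold in this situation: the boundary conditions are $L$-independent in the rescaled picture, the coefficient degeneration is of the stated form, and Hypothesis \ref{hyp} is precisely their non-degeneracy assumption guaranteeing that the limit form is densely defined and closed. The conclusion is generalized norm resolvent convergence (generalized because the underlying space changes, $L^2(\Gamma(L))\to L^2(\Gamma_+)$); concretely one compares $(H(L)-\mathrm i)^{-1}$ with $J^*(\widetilde H-\mathrm i)^{-1}J$ for the natural identification operators $J$, as recalled in Remark \ref{rem:gNRS}.

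Finally, the spectral convergence is a soft consequence: generalized norm resolvent convergence implies convergence of spectra in the Hausdorff metric, and since each $H(L)$ and $\widetilde H$ has purely discrete spectrum (compact embedding, as noted after Proposition \ref{prop:SAconditions}) with eigenvalues accumulating only at $+\infty$, one upgrades this to Hausdorff convergence of the eigenvalue multisets on any bounded window, with eigenvalues counted with multiplicity; one must also note that eigenvalues can escape to $+\infty$ (the "lost" modes living on the $\cE_0$-edges, whose energies scale like $L(e)^{-2}$), which is why the statement is phrased for the spectra as multisets rather than claiming $\lambda_k(H(L))\to\lambda_k(\widetilde H)$ for each fixed $k$ without caveat. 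The main obstacle, I expect, is not the analysis of the convergence—which is largely quotable from \cite{BerkolaikoLatushkinSukhtaiev2019}—but the bookkeeping in the first step: verifying carefully that the projection of $\mathcal{X}\cap(D_0\times N_0)$ onto the $\partial\Gamma_+$ coordinates is Lagrangian, tracking how the $\cE_0$-degrees of freedom are consumed by the constraints in $D_0$ and $N_0$, and seeing that Hypothesis \ref{hyp} is exactly what rules out a defect in that count. Once that is in place the rest is assembling cited results.
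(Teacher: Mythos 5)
Your proposal is correct and follows essentially the same route as the paper, which gives no self-contained proof but explicitly presents this theorem as a collection of \cite[Theorems 3.1, 3.5, 3.6]{BerkolaikoLatushkinSukhtaiev2019} in the Lagrangian-subspace formulation of the vertex conditions. Your additional scaffolding (the projection of the Lagrangian plane onto the $\partial\Gamma_+$-coordinates, the role of Hypothesis~\ref{hyp} in the dimension count, and the rescaling to a fixed Hilbert space) is consistent with how that reference carries out the argument, so there is nothing to flag beyond noting that the heavy lifting is, as in the paper, delegated to the cited work.
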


\begin{remark}\label{rem:Hyp}
Hypothesis \ref{hyp} was analyzed in \cite{BerkolaikoLatushkinSukhtaiev2019} in great detail. Amongst others, \cite[Lemma 3.3]{BerkolaikoLatushkinSukhtaiev2019} shows that among vertex conditions without Robin part (i.e.\ $P_{v, \rm R} = 0$ for all $v \in \cV$), Hypothesis \ref{hyp} is satisfied if and only if the zero function is the only function satisfying the prescribed vertex conditions and being both constant on each edge of $\Gamma_0$ and identically zero on $\Gamma_+$. Moreover, \cite[Lemma 3.4]{BerkolaikoLatushkinSukhtaiev2019} implies that Hypothesis \ref{hyp} is satisfied for all vertex conditions that require continuity at each vertex, i.e.\ for $\delta$ vertex conditions, including the special case of continuity-Kirchhoff conditions.
\end{remark}

\begin{remark}\label{rem:gNRS}
The convergence of $H(L)$ to $\widetilde H$ in generalized norm resolvent sense shown in Theorem \ref{thm:shrinkage} can be found in \cite[p.\ 351]{Weidmann2000}, see also \cite[Chapter 4]{P12}, and means that
\[\|J_L (\widetilde H+ \mathrm{i})^{-1} - (H(L)+\mathrm{i})^{-1} J_L\|\to 0\]
as $L\to \widetilde{L}$, where $\| \cdot \|$ denotes the norm in the space of bounded linear operators from $L^2 (\widetilde \Gamma)$ to $L^2 (\Gamma)$ and $J_L$ is the embedding of $L^2(\widetilde{\Gamma})$ to $L^2(\Gamma(L))$ given by
\[(J_L f)_e(x):=\begin{cases}
                    \sqrt{\frac{\widetilde{L}(e)}{L(e)}} f_e\Bigl(\frac{\widetilde{L}(e)}{L(e)} x\Bigr) & x\in (0,L(e)),\, e\in \cE_+,\\
                    0 & e\in \cE_0,
                \end{cases}\]
i.e.\ $J_L$ scales on the edges $\cE_+$ linearly and extends trivially by zero on the edges $\cE_0$.
\end{remark}

We illustrate the above result at two examples.

\begin{example}
% [Lasso Graph]
Consider a ``lasso graph'' consisting of an interval $e_1$ and a loop $e_2$; see Figure \ref{fig:lasso}. We impose anti-Kirchhoff coupling conditions; this simplifies to a Dirichlet condition at the degree-one vertex at the end of the interval.
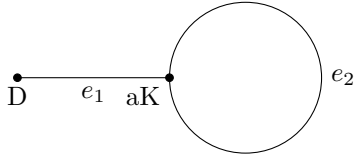
\begin{figure}[htb]
  \centering
  \begin{tikzpicture}
    \draw[fill] (0,0) circle(0.05) node[below]{D};
    \draw[fill] (2,0) circle(0.05) node[below left]{aK};
    \draw (0,0)--(2,0);
    \draw (1,0) node[below]{$e_1$};
    \draw (3,0) circle(1);
    \draw (4,0) node[right]{$e_2$};
  \end{tikzpicture}
  \caption{The lasso graph.}
  \label{fig:lasso}
\end{figure}
We send one or the other edge length to zero:
\begin{itemize}
 \item[(a)] Let $\cE_+ = \{e_1\}$ and $\cE_0 = \{e_2\}$. Then Hypothesis \ref{hyp} is satisfied. Indeed, in view of Remark \ref{rem:Hyp}, if $f=0$ on $e_1$ and $f$ is constant on $e_2$ (which in particular means $f_{e_2}(0) = f_{e_2}(L(e_2))$), then together with the anti-Kirchhoff condition at the vertex, i.e.\ $f_{e_2}(0) + f_{e_2}(L(e_2)) = 0$, we obtain $f=0$ on $e_2$.
 
 The limiting graph after shrinking the length of $e_2$ to zero is an interval graph with a Dirichlet condition on the left and a Neumann condition on the right.
 
 \item[(b)] Let $\cE_+ = \{e_2\}$ and $\cE_0 = \{e_1\}$. Then Hypothesis \ref{hyp} is also satisfied. Indeed, again with Remark \ref{rem:Hyp}, if $f=0$ on $e_2$ and $f$ is constant on $e_1$, the Dirichlet condition on the left endpoint of $e_1$ (as well as the anti-Kirchhoff coupling at the right endpoint of $e_1$) yields $f_{e_1}=0$.
 
 The limiting graph after shrinking the length of $e_1$ to zero is a loop graph with an anti-Kirchhoff conditionat the vertex.
\end{itemize}

The secular equation for the lasso graph is given by
\[2\cos(kL(e_1))\cos(kL(e_2)) + 2\cos(kL(e_1)) - \sin(kL(e_1))\sin(kL(e_2)) = 0,\]
so that the eigenvalues are given by $\lambda = k^2$, where $k$ is a solution of the secular equation.
% Now shinking one or the other edge as above results in the reduced secular equations
% \begin{itemize}
%  \item[(a)] $4\cos(kL(e_1)) = 0$,
%  \item[(b)] $2\cos(kL(e_2)) + 2 = 0$,
% \end{itemize}
The implicit function theorem yields that the solutions $k$ (and therefore the eigenvalues) converge when shrinking one edge. This resembles the spectral convergence in Theorem \ref{thm:shrinkage}.
\end{example}

\begin{example}
% [Circle Graph]
Take a circle graph with at least two vertices with anti-Kirchhoff coupling conditions as in Figure \ref{fig:circle_path}. We want to shrink one edge to zero, i.e.\ for one fixed $e_0 \in \cE$ we have $\cE_0 = \{e_0\}$ and $\cE_+ = \cE\setminus\{e_0\}$.
By Remark \ref{rem:Hyp} it is easy to see that Hypothesis \ref{hyp} is satisfied. Let us determine the coupling for the limiting graph. In order to do that, let us assume that the edges are parametrized counterclockwise, and let $\check{e}$ be the edge preceding $e_0$ and $\hat{e}$ be the edge following $e_0$. Then the anti-Kirchhoff condition at the left endpoint of $e_0$ read
\[-f'_{\check{e}}(L(\check{e})) = f_{e_0}' (0),\quad f_{\check{e}}(L(\check{e})) + f_{e_0} (0) = 0,\]
and analogously for the right endpoint of $e_0$ we obtain
\[-f'_{e_0}(L(e_0)) = f_{\hat{e}}'(0),\quad f_{e_0} (L(e_0)) + f_{\hat{e}}(0) = 0.\]
Moreover, having boundary values in $D_0$ yields
\[f_{e_0} (0) = f_{e_0} (L(e_0)),\]
while having boundary values for the (inward) derivatives in $N_0$ yields
\[f_{e_0} '(0) = f_{e_0}' (L(e_0)).\]
Together, we obtain that shinking the length of $e_0$ to zero yields the coupling conditions
\[f_{\check{e}}(L(\check{e})) = f_{\hat{e}}(0),\quad -f_{\check{e}}'(L(\check{e})) = - f_{\hat{e}}'(0),\]
thus we obtain a continuity-Kirchhoff condition relating the edges $\check{e}$ and $\hat{e}$.
\begin{figure}[htb]
  \centering
  \begin{tikzpicture}
    \draw[fill] (1,0) circle(0.05) node[right]{aK};
    \draw[fill] (-0.5,{0.5*sqrt(3)}) circle(0.05) node[above left]{aK};
    \draw[fill] (-0.5,{-0.5*sqrt(3)}) circle(0.05) node[below left]{aK};
    \draw (0,0) circle(1);
    \draw (0.5,{0.5*sqrt(3)}) node[above right]{$\check{e}$};
    \draw (-1,0) node[left]{$e_0$};
    \draw (0.5,{-0.5*sqrt(3)}) node[below right]{$\hat{e}$};
    
    \begin{scope}[shift={(7,0)}]
        \draw[fill] (-1,0) circle(0.05) node[left]{cK};
        \draw[fill] (1,0) circle(0.05) node[right]{aK};
        \draw (0,0) circle(1);
        \draw (0,1) node[above]{$\check{e}$};
        \draw (0,-1) node[below]{$\hat{e}$};
    \end{scope}
  \end{tikzpicture}
  \caption{Left: A circle graph with anti-Kirchhoff coupling conditions and the resulting circle graph after shrinking edge $e_0$ to zero length.}
  \label{fig:circle_path}
\end{figure}
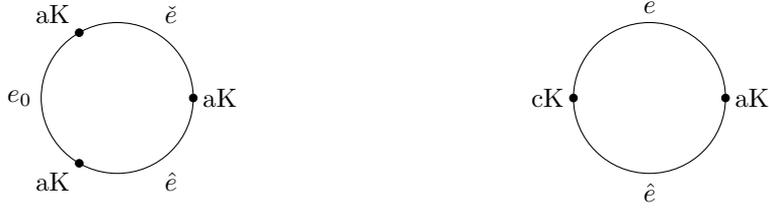
\end{example}

We point out that the theorem does not generally apply to graphs with anti-Kirchhoff or $\delta'$ vertex conditions, as the following example shows.

\begin{example}
Consider a ``pumpkin'' graph consisting of two vertices $v_1, v_2$ and three edges $e_1, e_2, e_3$ all connecting $v_1$ and $v_2$ and having arbitrary positive lengths, see Figure \ref{fig:pumpkin}. 
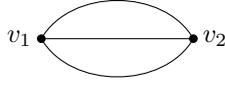
\begin{figure}[htb]
  \centering
  \begin{tikzpicture}
    \draw[fill] (0,0) circle(0.05) node[left]{$v_1$};
    \draw[fill] (2,0) circle(0.05) node[right]{$v_2$};
    \draw (0,0)--(2,0);
    \draw (0,0) to[out=60, in = 120] (2,0);
    \draw (0,0) to[out=-60, in = 240] (2,0);
  \end{tikzpicture}
  \caption{The pumpkin graph with three edges.}
  \label{fig:pumpkin}
\end{figure}
Impose either anti-Kirchhoff or $\delta'$ vertex conditions at $v_1$ and $v_2$, and let $\cE_0 = \{e_1, e_2\}$ and $\cE_+ = \{e_3\}$. Our ambition is, thus, to send $L (e_1)$ and $L (e_2)$ to zero. However, Theorem \ref{thm:shrinkage} is not applicable. Indeed, the function $f$ that is constantly $1$ on $e_1$, $-1$ on $e_2$ and zero on $e_3$ belongs to $\dom (H)$, i.e.\ it satisfies the imposed vertex conditions. Moreover, on each edge in $\cE_0$, the function has the same value at both end points and zero derivatives, so that $f |_{\partial \Gamma} \in D_0$ and $f' |_{\partial \Gamma} \in N_0$ and $f |_{\partial \Gamma_+} = f' |_{\partial \Gamma_+} = 0$. But, clearly, $f$ has non-zero boundary values on the edges $e_1$ and $e_2$; thus Hypothesis \ref{hyp} is violated.
\end{example}

For further illuminating examples we refer the reader to \cite[Section 3]{BerkolaikoLatushkinSukhtaiev2019}.

\section{Graph manipulations preserving the total length}
\label{sec:surgery-principles_vertices}

In this section we consider graph manipulations that keep the total length constant and study their influence on the eigenvalues of Schrödinger operators. In some cases also the coupling conditions at the vertices are changed.

\subsection{Changing the strength of a coupling}

We start by considering the effect of a change of the strength of a coupling condition. We formulate a general statement for coupling conditions with non-trivial Robin part. The cases of $\delta$  and $\delta'$ couplings will then be simple corollaries.

\begin{theorem}
\label{thm:strength_general}
  Let $\Gamma$ be a compact metric graph, $q\in L^\infty(\Gamma)$ real, $v_0\in \cV$ and $H$ the Schr\"odinger operator on $\Gamma$ with potential $q$, arbitrary self-adjoint coupling conditions as in Proposition \ref{prop:SAconditions} at the vertices $v\in \cV\setminus\{v_0\}$ and a self-adjoint coupling condition with non-trivial Robin part, $P_{v_0, \rm R} \neq 0$, at the vertex $v_0$. Denote by $\Lambda_{v_0}$ the self-adjoint, invertible coupling operator in $\ran P_{v_0, \rm R}$ and let $\widetilde \Lambda_{v_0}$ be another self-adjoint, invertible operator in $\ran P_{v_0, \rm R}$ such that $\widetilde \Lambda_{v_0} \leq \Lambda_{v_0}$.
%   and $\Lambda_{v_0} - \widetilde \Lambda_{v_0}$ has a non-trivial kernel. 
  Denote by $\widetilde H$ the Schr\"odinger operator on $\Gamma$ obtained from $H$ by replacing the coupling operator $\Lambda_{v_0}$ at $v_0$ by $\widetilde \Lambda_{v_0}$. Then 
  \begin{align}\label{eq:strengthEst}
    \lambda_k (\widetilde H) \leq \lambda_k (H)
  \end{align}
  holds for all $k \in \N$. If the eigenvalue $\lambda_k (H)$ is simple and for the corresponding eigenfunction $f$ and its associated vector $F (v_0)$ of boundary evaluations at the vertex~$v_0$, $P_{v_0, \rm R} F (v_0) \notin \ker (\Lambda_{v_0} - \widetilde \Lambda_{v_0})$, then 
   \begin{align*}
    \lambda_k (\widetilde H) < \lambda_k (H).
  \end{align*}
\end{theorem}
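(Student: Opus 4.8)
The monotonicity bound \eqref{eq:strengthEst} is the standard consequence of the min-max principle once one observes that $\widetilde h \le h$ on the common form domain: both operators have the same form domain (the Robin part sits inside $\dom h$, it does not alter which functions are admissible, only the value of the quadratic form), and for every $f$ in it
\begin{align*}
 \widetilde h(f) - h(f) = \big\langle (\widetilde\Lambda_{v_0} - \Lambda_{v_0}) P_{v_0,\rm R} F(v_0), P_{v_0,\rm R} F(v_0)\big\rangle \le 0
\end{align*}
by the assumption $\widetilde\Lambda_{v_0} \le \Lambda_{v_0}$. Plugging a minimizing $k$-dimensional subspace for $\lambda_k(H)$ into the variational characterization for $\lambda_k(\widetilde H)$ then gives \eqref{eq:strengthEst}. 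So the only real content is the strict inequality, and that is what I would focus on.

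\textbf{Strictness via the eigenfunction.} Suppose $\lambda_k(H)$ is simple with normalized eigenfunction $f$, and suppose for contradiction that $\lambda_k(\widetilde H) = \lambda_k(H) =: \lambda$. First I would extract a minimizing subspace for $\widetilde H$ at level $k$: let $G = \spann\{g_1,\dots,g_k\}$ be spanned by eigenfunctions of $\widetilde H$ for $\lambda_1(\widetilde H),\dots,\lambda_k(\widetilde H)$, so that $\widetilde h(g) \le \lambda \int_\Gamma|g|^2$ for all $g \in G$. Likewise let $F$ be the span of eigenfunctions of $H$ for $\lambda_1(H),\dots,\lambda_{k-1}(H)$ together with $f$; by simplicity $\dim F = k$ and $h(g) \le \lambda\int_\Gamma|g|^2$ on $F$, with equality along the $f$-direction only. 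A dimension count shows $G$ and the span of the \emph{first} $k-1$ eigenfunctions of $H$ cannot be disjoint unless... — more cleanly, I would instead argue: by the min-max principle applied to $\widetilde H$, there is a $g \in G$, $g \neq 0$, orthogonal to the first $k-1$ eigenfunctions of $H$; for such a $g$,
\begin{align*}
 \lambda \le \frac{h(g)}{\int_\Gamma|g|^2} = \frac{\widetilde h(g) + \langle(\Lambda_{v_0}-\widetilde\Lambda_{v_0})P_{v_0,\rm R}F_g(v_0),P_{v_0,\rm R}F_g(v_0)\rangle}{\int_\Gamma|g|^2} \le \frac{\widetilde h(g)}{\int_\Gamma|g|^2} + 0,
\end{align*}
where $F_g(v_0)$ is the vector of boundary values of $g$. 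Here the first inequality is the min-max lower bound for $\lambda_k(H) = \lambda$ on the $(k-1)$-codimensional subspace (note $h(g)/\int|g|^2 \ge \lambda_k(H)$ because $g \perp$ first $k-1$ eigenfunctions), and the last uses $\Lambda_{v_0} - \widetilde\Lambda_{v_0} \ge 0$. Since also $\widetilde h(g)/\int_\Gamma|g|^2 \le \lambda$ because $g \in G$, the whole chain is a string of equalities; in particular $\langle(\Lambda_{v_0}-\widetilde\Lambda_{v_0})P_{v_0,\rm R}F_g(v_0),P_{v_0,\rm R}F_g(v_0)\rangle = 0$, which by non-negativity forces $P_{v_0,\rm R}F_g(v_0) \in \ker(\Lambda_{v_0}-\widetilde\Lambda_{v_0})$, and $h(g)/\int_\Gamma|g|^2 = \lambda_k(H)$ forces $g$ to be an eigenfunction of $H$ at eigenvalue $\lambda$ — hence, by simplicity, $g$ is a multiple of $f$. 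But then $P_{v_0,\rm R}F(v_0) \in \ker(\Lambda_{v_0}-\widetilde\Lambda_{v_0})$, contradicting the hypothesis. Therefore $\lambda_k(\widetilde H) < \lambda_k(H)$.

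\textbf{Where the difficulty lies.} The bound $\eqref{eq:strengthEst}$ and the computation $\widetilde h - h \le 0$ are routine. The subtle point is the bookkeeping in the strictness argument: one must produce a \emph{single} vector that is simultaneously (i) in a minimizing subspace for $\widetilde H$ at level $k$, so $\widetilde h(g) \le \lambda\int|g|^2$, and (ii) orthogonal to the first $k-1$ eigenfunctions of $H$, so that $h(g) \ge \lambda\int|g|^2$; the existence of such a $g$ is a standard dimension-counting fact (a $k$-dimensional space meets a subspace of codimension $k-1$ nontrivially), but it has to be invoked carefully, and one must keep track of why the resulting chain of inequalities collapses exactly to equalities. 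The remaining step — turning "$g$ is a $\lambda$-eigenfunction of $H$ whose Robin boundary data lies in $\ker(\Lambda_{v_0}-\widetilde\Lambda_{v_0})$" into a contradiction — then just uses simplicity of $\lambda_k(H)$ and the hypothesis on $f$. An alternative, perhaps cleaner, route would be to use that an eigenfunction of $\widetilde H$ at $\lambda$ would, if it did not lower the energy, also have to be an eigenfunction of $H$ at $\lambda$ (by the equality case in $\widetilde h \le h$ combined with both being critical at the same value), and then directly invoke the hypothesis; I would present whichever version reads more transparently, but the mechanism is the same.
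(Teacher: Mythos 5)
Your proof of the weak inequality \eqref{eq:strengthEst} is correct and coincides with the paper's: the form domains agree (only the Dirichlet projections constrain $\dom h$), the difference $h(g)-\widetilde h(g)=\big\langle(\Lambda_{v_0}-\widetilde\Lambda_{v_0})P_{v_0,\rm R}F_g(v_0),P_{v_0,\rm R}F_g(v_0)\big\rangle$ is nonnegative, and the min-max principle applied to a minimizing $k$-dimensional subspace for $H$ gives the claim.

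The strictness argument, however, has a genuine gap: your chain of inequalities does not close. Writing $c_g:=\big\langle(\Lambda_{v_0}-\widetilde\Lambda_{v_0})P_{v_0,\rm R}F_g(v_0),P_{v_0,\rm R}F_g(v_0)\big\rangle$, the hypothesis $\widetilde\Lambda_{v_0}\le\Lambda_{v_0}$ gives $c_g\ge 0$, so the step $\frac{\widetilde h(g)+c_g}{\int_\Gamma|g|^2}\le\frac{\widetilde h(g)}{\int_\Gamma|g|^2}$ asserts $c_g\le 0$ --- the opposite of what you know, and in effect the very thing you are trying to establish. For your choice of $g$ (in the span $G$ of the first $k$ eigenfunctions of $\widetilde H$, orthogonal to the first $k-1$ eigenfunctions of $H$) the available facts are $\widetilde h(g)\le\lambda\int_\Gamma|g|^2$, $h(g)\ge\lambda\int_\Gamma|g|^2$ and $h(g)=\widetilde h(g)+c_g$ with $c_g\ge0$; these are mutually consistent for any $c_g>0$ and force no equality. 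The sandwich only closes if you test the \emph{smaller} form on the eigenfunctions of the operator with the \emph{larger} form: let $F$ be the span of the first $k$ eigenspaces of $H$; then $\lambda=\lambda_k(\widetilde H)\le\max_{0\ne g\in F}\widetilde h(g)/\!\int_\Gamma|g|^2\le\max_{0\ne g\in F}h(g)/\!\int_\Gamma|g|^2\le\lambda_k(H)=\lambda$, so a maximizer $g_0$ of the middle quotient satisfies $\widetilde h(g_0)=h(g_0)=\lambda\int_\Gamma|g_0|^2$; since $\lambda_{k-1}(H)<\lambda_k(H)$ by simplicity, the second equality forces $g_0\in\ker(H-\lambda_k(H))=\spann\{f\}$, and the first then yields $c_f=0$, i.e.\ $P_{v_0,\rm R}F(v_0)\in\ker(\Lambda_{v_0}-\widetilde\Lambda_{v_0})$, contradicting the hypothesis. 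This is essentially the paper's argument; your closing ``alternative route'' points at this mechanism but is not carried out.
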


\begin{proof}
Let $h$ be the quadratic form associated with $H$ and $\widetilde h$ be the quadratic form associated with $\widetilde H$. Moreover, denote by $F$ the span of the spaces $\ker (H - \lambda_j (H))$ for $1 \leq j \leq k$. Then $\dim F \geq k$, and for all $f \in F$ we have
\begin{align}\label{eq:smallDifference}
 h (f) - \widetilde h (f) & = \big\langle \Lambda_{v_0} P_{v_0, \rm R} F (v), P_{v_0, \rm R} F (v) \big\rangle - \big\langle \widetilde \Lambda_{v_0} P_{v_0, \rm R} F (v), P_{v_0, \rm R} F (v) \big\rangle \geq 0.
\end{align}
Thus, the assertion \eqref{eq:strengthEst} follows immediately from the min-max principle. 

Assume now that for some $k$, $\lambda_k (H)$ is simple and equality holds in \eqref{eq:strengthEst}. Then
\begin{align*}
 \lambda_{k - 1} (\widetilde H) \leq \lambda_{k - 1} (H) < \lambda_k (H) = \lambda_k (\widetilde H);
\end{align*}
in particular, for all nontrivial $f$ in the space $F_{k-1}$ formed of the span of $\ker (H - \lambda_j (H))$ for $1 \leq j \leq k - 1$,
\begin{align*}
 \widetilde h (f) \leq \lambda_{k - 1} (H) \int_\Gamma |f|^2 < \lambda_k (H) \int_\Gamma |f|^2.
\end{align*}
Therefore the assumption $\lambda_k (\widetilde H) = \lambda_k (H)$ implies equality in \eqref{eq:smallDifference} for all $f \in \ker (H - \lambda_k (H))$. This implies
\begin{align*}
 P_{v_0, \rm R} F (v_0) \in \ker (\Lambda_{v_0} - \widetilde \Lambda_{v_0}).
\end{align*}
for all $f \in \ker (H - \lambda_1 (H))$ and proves the second assertion.
\end{proof}

For the cases of $\delta$  and $\delta'$ coupling conditions at $v_0$, we obtain the following two corollaries.

\begin{corollary}[{\cite[Theorem 3.1.8]{BK13}}]
%\label{cor:strength_alpha}
  Let $\Gamma$ be a compact metric graph, $q\in L^\infty(\Gamma)$ real, $v_0\in \cV$ and let $H$ be the Schr\"odinger operator on $\Gamma$ with potential $q$, arbitrary self-adjoint coupling conditions at the vertices $v\in \cV\setminus\{v_0\}$ and a $\delta$ coupling condition with strength $\alpha_{v_0}\in\R$ at $v_0$. Let $\widetilde\alpha_{v_0}<\alpha_{v_0}$ and denote by $\widetilde H$ the Schr\"odinger operator on $\Gamma$ with the same potential as $H$, the same coupling conditions at all $v\in \cV\setminus\{v_0\}$ and a $\delta$ coupling condition with strength $\widetilde \alpha_{v_0}$ at $v_0$. Then 
  \begin{align*}
    \lambda_k (\widetilde H) \leq \lambda_k (H)
  \end{align*}
  holds for all $k \in \N$. If the eigenvalue $\lambda_k (H)$ is simple and $f (v_0) \neq 0$, then 
   \begin{align*}
    \lambda_k (\widetilde H) < \lambda_k (H).
  \end{align*}
  In particular, $\lambda_1 (\widetilde H) < \lambda_1 (H)$ if $\delta$ coupling conditions are imposed at all vertices.
\end{corollary}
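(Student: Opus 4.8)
The plan is to deduce this corollary directly from Theorem~\ref{thm:strength_general} by identifying the $\delta$ coupling with the abstract setup there. First I would note that a $\delta$ condition of strength $\alpha_{v_0}$ at $v_0$ fits the framework of Proposition~\ref{prop:SAconditions} with $P_{v_0,\rm R} = \cP_{d}$ (where $d = \deg(v_0)$), which is nontrivial since $d \geq 1$, and with $\Lambda_{v_0}$ equal to multiplication by $\frac{\alpha_{v_0}}{d}$ on the one-dimensional space $\ran P_{v_0,\rm R} = \spann\{(1,\dots,1)^\top\}$. Likewise $\widetilde H$ corresponds to $\widetilde\Lambda_{v_0}$ being multiplication by $\frac{\widetilde\alpha_{v_0}}{d}$. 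Since $\widetilde\alpha_{v_0} < \alpha_{v_0}$ implies $\frac{\widetilde\alpha_{v_0}}{d} < \frac{\alpha_{v_0}}{d}$, we get $\widetilde\Lambda_{v_0} \leq \Lambda_{v_0}$ (indeed strictly, on this one-dimensional space), so Theorem~\ref{thm:strength_general} applies and yields $\lambda_k(\widetilde H) \leq \lambda_k(H)$ for all $k \in \N$.

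Next I would address the strict inequality. If $\lambda_k(H)$ is simple with eigenfunction $f$, then $P_{v_0,\rm R}F(v_0)$ is the projection of $F(v_0)$ onto $\spann\{(1,\dots,1)^\top\}$; by Lemma~\ref{lem:forms}(b) (or the explicit description of $\delta$ conditions) $f$ is continuous at $v_0$ with common value $f(v_0)$, so $F(v_0) = f(v_0)(1,\dots,1)^\top$ and hence $P_{v_0,\rm R}F(v_0) = f(v_0)(1,\dots,1)^\top$. The operator $\Lambda_{v_0} - \widetilde\Lambda_{v_0}$ is multiplication by $\frac{\alpha_{v_0} - \widetilde\alpha_{v_0}}{d} \neq 0$, so its kernel is trivial; therefore $f(v_0) \neq 0$ forces $P_{v_0,\rm R}F(v_0) \notin \ker(\Lambda_{v_0} - \widetilde\Lambda_{v_0})$, and the second part of Theorem~\ref{thm:strength_general} gives $\lambda_k(\widetilde H) < \lambda_k(H)$.

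Finally, for the last sentence I would specialize to $k = 1$ under the hypothesis that $\delta$ conditions are imposed at all vertices. By Theorem~\ref{thm:courant} the first eigenvalue is simple and the corresponding eigenfunction $f$ has no zero on $\Gamma$; in particular $f(v_0) \neq 0$, so the strict inequality just established yields $\lambda_1(\widetilde H) < \lambda_1(H)$. I do not anticipate a genuine obstacle here; the only point requiring a little care is the bookkeeping translating "$\delta$ strength" into the operator $\Lambda_{v_0}$ and confirming that $f(v_0) \neq 0$ is exactly the condition $P_{v_0,\rm R}F(v_0) \notin \ker(\Lambda_{v_0} - \widetilde\Lambda_{v_0})$ in this rank-one situation — which is immediate once one observes that everything on $\ran P_{v_0,\rm R}$ is scalar multiplication.
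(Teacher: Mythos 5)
Your proposal is correct and follows exactly the paper's route: the paper also deduces the corollary from Theorem~\ref{thm:strength_general} by observing that $\Lambda_{v_0}$ is multiplication by $\frac{\alpha_{v_0}}{\deg(v_0)}$ and that functions in $\dom H$ are continuous at $v_0$, with the final claim supplied by Theorem~\ref{thm:courant}. You have merely spelled out the bookkeeping (the rank-one structure of $\ran P_{v_0,\rm R}$ and the trivial kernel of $\Lambda_{v_0}-\widetilde\Lambda_{v_0}$) that the paper leaves implicit.
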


\begin{proof}
This follows immediately from Theorem \ref{thm:strength_general}, since $\Lambda_{v_0} = \frac{\alpha_{v_0}}{\deg (v_0)}$ and functions in $\dom H$ are continuous in this case.
\end{proof}

Thus the eigenvalues are monotone in each strength for $\delta$ couplings. However, the situation is more involved for $\delta'$ couplings. Note that the following theorem includes the case of a $\delta'$ coupling of strength zero, i.e.\ an anti-Kirchhoff condition.

\begin{corollary}
% [{cf.\ \cite[Theorem 3.1]{AliUsman2022}}]\label{cor:strength_beta}
  Let $\Gamma$ be a compact metric graph, $q\in L^\infty(\Gamma)$ real, $v_0\in \cV$ and let $H$ be the Schr\"odinger operator on $\Gamma$ with potential $q$, arbitrary self-adjoint coupling conditions at the vertices $v\in \cV\setminus\{v_0\}$ and a $\delta'$ coupling condition with strength $\beta_{v_0}\in\R$ at $v_0$. Let $\widetilde\beta_{v_0}\in\R$. Denote by $\widetilde H$ the Schr\"odinger operator on $\Gamma$ with potential $q$ with the same coupling conditions as $H$ at all $v\in \cV\setminus\{v_0\}$ and a $\delta'$ coupling condition with strength $\widetilde \beta_{v_0}$ at $v_0$. Suppose that one of the following conditions is satisfied:
  \begin{enumerate}
  \item[(a)] $0<\beta_{v_0}<\widetilde \beta_{v_0}$,
  \item[(b)] $\beta_{v_0}<\widetilde \beta_{v_0}<0$,
  \item[(c)] $\widetilde \beta_{v_0} < 0 < \beta_{v_0}$,
  \item[(d)] $\widetilde \beta_{v_0} < \beta_{v_0} = 0$.
  \end{enumerate}
  Then
  \begin{align*}
    \lambda_k (\widetilde H) \leq \lambda_k (H)
  \end{align*}
  holds for all $k \in \N$.
%   \item 
%   If $\beta_{v_0} <0<\widetilde \beta_{v_0}$ or $\widetilde \beta_{v_0} = 0$, then
%   \begin{align*}
%     \lambda_k (H) \leq \lambda_k (\widetilde H)
%   \end{align*}
%   for all $k \in \N$.  
%   \end{enumerate}
\end{corollary}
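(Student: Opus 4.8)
The plan is to split the four configurations into two groups: cases (a)--(c), which follow directly from Theorem~\ref{thm:strength_general}, and case (d), where $\beta_{v_0}=0$ (i.e.\ anti-Kirchhoff at $v_0$) lies outside the scope of that theorem and has to be treated separately.

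For cases (a)--(c), recall from Definition~\ref{def:Cond}(d) that a $\delta'$ coupling of strength $\gamma\neq 0$ at $v_0$ corresponds, in the framework of Proposition~\ref{prop:SAconditions}, to $P_{v_0,\rm R}=\cP_d$ with $d:=\deg(v_0)$ and to the coupling operator $\Lambda_{v_0}$ acting on the one-dimensional space $\ran\cP_d$ as multiplication by $\tfrac{d}{\gamma}$. In each of the cases (a)--(c) both $H$ and $\widetilde H$ carry such a coupling at $v_0$ (with $\beta_{v_0},\widetilde\beta_{v_0}\neq 0$), so $P_{v_0,\rm R}=\cP_d$ is the same for both, and the comparison hypothesis $\widetilde\Lambda_{v_0}\leq\Lambda_{v_0}$ of Theorem~\ref{thm:strength_general} reduces to the scalar inequality $\tfrac{1}{\widetilde\beta_{v_0}}\leq\tfrac{1}{\beta_{v_0}}$. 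I would then verify this using that $t\mapsto 1/t$ is positive and decreasing on $(0,\infty)$ and negative and decreasing on $(-\infty,0)$: in (a), $0<\tfrac{1}{\widetilde\beta_{v_0}}<\tfrac{1}{\beta_{v_0}}$; in (b), both reciprocals are negative and $\beta_{v_0}<\widetilde\beta_{v_0}$ forces $\tfrac{1}{\widetilde\beta_{v_0}}<\tfrac{1}{\beta_{v_0}}$; in (c), $\tfrac{1}{\widetilde\beta_{v_0}}<0<\tfrac{1}{\beta_{v_0}}$. Theorem~\ref{thm:strength_general} then yields $\lambda_k(\widetilde H)\leq\lambda_k(H)$ for all $k\in\N$.

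In case (d) the coupling of $H$ at $v_0$ is anti-Kirchhoff, i.e.\ $P_{v_0,\rm R}=0$, so Theorem~\ref{thm:strength_general} does not apply and I would argue at the level of the associated quadratic forms $h$ and $\widetilde h$. By Proposition~\ref{prop:SAconditions} the anti-Kirchhoff condition imposes on $\dom h$ the extra linear constraint $\sum_{j=1}^{\deg(v_0)}F_j(v_0)=0$, which the $\delta'$ form domain does not impose, while the conditions at all other vertices agree; hence $\dom h\subseteq\dom\widetilde h$. Moreover, by Lemma~\ref{lem:forms}(c),(d), for $f\in\dom h$ the anti-Kirchhoff vertex term of $h$ at $v_0$ vanishes, and so does the $\delta'$ term $\tfrac{1}{\widetilde\beta_{v_0}}\big|\sum_j F_j(v_0)\big|^2$ of $\widetilde h$, since $\sum_j F_j(v_0)=0$; all remaining terms coincide, so $\widetilde h(f)=h(f)$ for every $f\in\dom h$. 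Restricting the min--max characterisation of $\lambda_k(\widetilde H)$ to $k$-dimensional subspaces of $\dom h\subseteq\dom\widetilde h$ and using $\widetilde h=h$ there gives $\lambda_k(\widetilde H)\leq\lambda_k(H)$.

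I do not expect a genuine obstacle here; the only point requiring attention is the case bookkeeping, namely checking the sign of $\tfrac{1}{\beta_{v_0}}-\tfrac{1}{\widetilde\beta_{v_0}}$ in each of the sign patterns (a)--(c), and recognising that $\beta_{v_0}=0$ must be handled outside Theorem~\ref{thm:strength_general}. In fact all four cases can be phrased uniformly: in each of them $\dom h\subseteq\dom\widetilde h$ and $\widetilde h(f)\leq h(f)$ for all $f\in\dom h$ (with exact equality in (d), and with the difference of forms equal to $\big(\tfrac{1}{\beta_{v_0}}-\tfrac{1}{\widetilde\beta_{v_0}}\big)\big|\sum_j F_j(v_0)\big|^2\geq 0$ in (a)--(c)), after which the min--max principle immediately gives the claim.
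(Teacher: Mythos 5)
Your proposal is correct and follows essentially the same route as the paper: cases (a)--(c) are reduced to Theorem \ref{thm:strength_general} via $\Lambda_{v_0}=\deg(v_0)/\beta_{v_0}$ and the scalar inequality $\tfrac{1}{\widetilde\beta_{v_0}}\leq\tfrac{1}{\beta_{v_0}}$, while case (d) is handled by the form-domain inclusion $\dom h\subseteq\dom\widetilde h$ with $\widetilde h=h$ on $\dom h$ and the min--max principle. Your sign checks in (a)--(c) and the observation that the anti-Kirchhoff constraint kills the $\delta'$ vertex term in (d) are exactly the details the paper leaves implicit.
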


\begin{proof}
If $\beta_{v_0},\widetilde\beta_{v_0} \neq 0$, the statements follow immediately from Theorem \ref{thm:strength_general} and $\Lambda_{v_0} = \frac{\deg{v_0}}{\beta_{v_0}}$. For the case of assumption (d), the quadratic forms associated with $H$ and $\widetilde H$ satisfy $\dom h \subseteq \dom \widetilde h$ and $h(f) = \widetilde{h}(f)$ for all $f\in \dom h$. Thus, the min-max principle yields the assertion in this case.
\end{proof}

\subsection{Changing \texorpdfstring{$\delta$}{delta} couplings to \texorpdfstring{$\delta'$}{delta'} couplings}

Closely related to the change of the coupling strength at a vertex considered in the previous section is replacing a $\delta$ coupling condition by a $\delta'$ coupling condition. 
% For a semibounded self-adjoint operator $H$ with discrete spectrum let $N_-(H)$ denote the number of negative eigenvalues.

\begin{theorem}\label{thm:laaangweilig}
Let $\Gamma$ be a compact metric graph, $q\in L^\infty(\Gamma)$ real, and let $v_0 \in \cV$. Denote by $H$ the Schrödinger operator with potential $q$ subject to arbitrary self-adjoint vertex conditions at all $v \in \cV \setminus \{v_0\}$ and a $\delta$ vertex condition with coupling strength $\alpha_{v_0} \in \R$ at $v_0$. Furthermore, denote by $\widetilde H$ the Schrödinger operator with the same potential and the same vertex conditions as for $H$ at all vertices except $v_0$, and a $\delta'$ vertex condition with strength $\beta_{v_0} \in \R \setminus \{0\}$ at $v_0$. Assume that
  \begin{align*}
    \frac{\deg (v_0)^2}{\beta_{v_0}} \leq \alpha_{v_0}.
  \end{align*}
Then
\begin{align*}
 \lambda_k (\widetilde H) \leq \lambda_k (H)
\end{align*}
holds for all $k \in \N$.
\end{theorem}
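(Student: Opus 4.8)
The plan is to compare the two quadratic forms directly via the min-max principle. Let $h$ and $\widetilde h$ be the forms associated with $H$ and $\widetilde H$, respectively. Both operators have the same potential and the same vertex conditions away from $v_0$; the only difference is at $v_0$, where $H$ imposes a $\delta$ condition (continuity of $f$ at $v_0$, with vertex term $\alpha_{v_0}|f(v_0)|^2$ by Lemma \ref{lem:forms}(ii)) whereas $\widetilde H$ imposes a $\delta'$ condition (no constraint on $f$ at $v_0$, with vertex term $\frac{1}{\beta_{v_0}}\bigl|\sum_{j=1}^{\deg(v_0)} F_j(v_0)\bigr|^2$ by Lemma \ref{lem:forms}(iv)). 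Since the $\delta$ condition is a restriction on the form domain and the $\delta'$ condition is not, we have $\dom h \subseteq \dom \widetilde h$. Thus it suffices to prove the pointwise inequality $\widetilde h(f) \le h(f)$ for every $f \in \dom h$; the conclusion $\lambda_k(\widetilde H) \le \lambda_k(H)$ then follows immediately from the min-max characterization by taking, for each $k$, a $k$-dimensional subspace $F \subseteq \dom h$ on which $h$ is bounded above by $\lambda_k(H)\int_\Gamma|\cdot|^2$, and noting $F \subseteq \dom \widetilde h$ with $\widetilde h \le h$ on $F$.

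The pointwise inequality reduces, after cancelling all the common terms ($\int_\Gamma|f'|^2$, $\int_\Gamma q|f|^2$, and the vertex terms at $v \ne v_0$), to comparing the two vertex contributions at $v_0$. For $f \in \dom h$, continuity at $v_0$ gives $F_j(v_0) = f(v_0)$ for all $j = 1, \dots, \deg(v_0)$, hence $\sum_{j=1}^{\deg(v_0)} F_j(v_0) = \deg(v_0)\, f(v_0)$ and therefore
\begin{align*}
 \frac{1}{\beta_{v_0}} \bigg| \sum_{j=1}^{\deg(v_0)} F_j(v_0) \bigg|^2 = \frac{\deg(v_0)^2}{\beta_{v_0}} |f(v_0)|^2 \le \alpha_{v_0} |f(v_0)|^2,
\end{align*}
where the inequality is exactly the hypothesis $\frac{\deg(v_0)^2}{\beta_{v_0}} \le \alpha_{v_0}$. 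Adding back the common terms yields $\widetilde h(f) \le h(f)$ for all $f \in \dom h$, as required.

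There is essentially no main obstacle here — the argument is a one-line form comparison once the explicit shapes of the vertex terms from Lemma \ref{lem:forms} are invoked. The only point requiring a moment's care is the domain inclusion $\dom h \subseteq \dom \widetilde h$: one must observe that the $\delta'$ condition at $v_0$ imposes \emph{no} constraint on the $H^1$-functions in the form domain (the Dirichlet projection $P_{v_0,\rm D}$ is zero for a $\delta'$ condition), so passing from the $\delta$ form domain (which requires continuity at $v_0$) to the $\delta'$ form domain only enlarges it; the vertex conditions at all other vertices are identical, so the inclusion holds. After that, the min-max principle closes the argument with no further work.
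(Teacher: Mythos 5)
Your argument is correct and is essentially identical to the paper's proof: both compare the quadratic forms on $\dom h \subseteq \dom \widetilde h$, use continuity at $v_0$ to rewrite $\sum_{j=1}^{\deg(v_0)} F_j(v_0) = \deg(v_0) f(v_0)$, and conclude $\widetilde h(f) \le h(f)$ from the hypothesis $\deg(v_0)^2/\beta_{v_0} \le \alpha_{v_0}$ before invoking the min-max principle. No issues.
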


A version of this theorem for Lipschitz partitions of the Euclidean space can be found in \cite[Corollary 4.3, Corollary 4.9]{BEV14}.

\begin{proof}
Let $h$ and $\widetilde h$ be the quadratic forms associated with $H$ and $\widetilde H$, respectively, and let $f \in \dom h$. Then $f \in \dom \widetilde h$ and
\begin{align*}
    h (f) - \widetilde h (f) & = \alpha_{v_0} |f (v_0)|^2 - \frac{1}{\beta_{v_0}} \Bigl| \sum_{j=1}^{\deg(v_0)} F_j (v_0) \Bigr|^2 \\
    & = \alpha_{v_0} |f (v_0)|^2 - \frac{\deg (v_0)^2}{\beta_{v_0}} | f (v_0) |^2 \geq 0,
\end{align*} 
which implies the assertion.
\end{proof}

The case of an anti-Kirchhoff condition, $\beta_{v_0} = 0$, is more involved; the above proof fails since in this case no inclusion between the domains of the two quadratic forms $h$ and $\widetilde h$ prevails. The following example illustrates what can happen.

\begin{example}
Consider a star graph with three edges of arbitrary lengths and the zero potential on them. Impose Neumann vertex conditions at the degree-one vertices. For the operator $H$, we impose a continuity-Kirchhoff vertex condition at the central vertex, while $\widetilde H$ is equipped with an anti-Kirchhoff condition there. As earlier, any function which is constantly $1$ on one edge, $-1$ on another and $0$ on the third edge belongs to $\ker \widetilde H$, and these functions span a two-dimensional vector space. We obtain
\begin{align*}
 \lambda_2 (\widetilde H) = 0 < \lambda_2 (H).
\end{align*}

Consider, on the other hand, the Laplacian on an interval and fix a Neumann boundary condition at the left end point for both $H$ and $\widetilde H$. At the right end point we impose a Kirchhoff (i.e.\ Neumann) condition for $H$ and an anti-Kirchhoff (i.e.\ Dirichlet) condition for $\widetilde H$. Then 
\begin{align*}
 \lambda_1 (H) = 0 < \frac{\pi^2}{4 L^2} = \lambda_1 (\widetilde H),
\end{align*}
where $L$ denotes the length of the interval.
\end{example}

Complementary to this example, it should be mentioned that in certain situations continuity-Kirchhoff and anti-Kirchhoff vertex conditions lead to the same positive eigenvalues; cf.\ \cite{KR20}.

\subsection{Joining two vertices}

Given a graph, we may join two vertices by identifying them. More specifically, the graph $\widetilde \Gamma$ is obtained from $\Gamma$ by replacing two vertices $v_1, v_2$ by a new vertex $v_0$ with $\deg (v_0) = \deg (v_1) + \deg (v_2)$, such that
\begin{align*}
 \cE_{v_0} = \cE_{v_1} \cup \cE_{v_2};
\end{align*}
see Figure \ref{fig:THMjoining}. We review now how the eigenvalues of Schrödinger operators behave under this transformation.
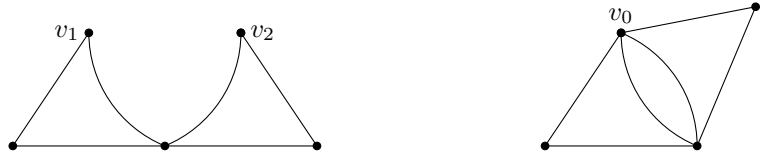
\begin{figure}[htb]
  \centering
  \begin{tikzpicture}
    \draw[fill] (0,0) circle(0.05);
    \draw[fill] (2,0) circle(0.05);
    \draw[fill] (1,1.5) circle(0.05) node[left]{$v_1$};
    \draw[fill] (4,0) circle(0.05);
    \draw[fill] (3,1.5) circle(0.05);
    \draw[fill] (3,1.5) circle(0.05) node[right]{$v_2$};
    \draw (1,1.5)--(0,0)--(2,0);
    \draw (2,0)--(4,0)--(3,1.5);
    \draw (1,1.5) arc(180:247.38:1.625);
    \draw (3,1.5) arc(0:-67.38:1.625);
    %\draw (0,0) arc(45:405:0.6);
    \begin{scope}[shift={(7,0)}]
      \draw[fill] (0,0) circle(0.05);
      \draw[fill] (2,0) circle(0.05);
      \draw[fill] (1,1.5) circle(0.05) node[above]{$v_0$};
      \draw (1,1.5)--(0,0)--(2,0);
      \draw (1,1.5) arc(180:247.38:1.625);
      \begin{scope}[rotate around={67.38:(2,0)}]
	\draw (3,1.5) arc(0:-67.38:1.625);
	\draw[fill] (4,0) circle(0.05);
	\draw[fill] (3,1.5) circle(0.05);
	\draw (2,0)--(4,0)--(3,1.5);
      \end{scope}
      %\draw (0,0) arc(45:405:0.6);
    \end{scope}
  \end{tikzpicture}
  \caption{Joining the vertices $v_1$ and $v_2$ to a new vertex $v_0$.
  }
  \label{fig:THMjoining}
\end{figure}
We start with the case of $\delta$ coupling conditions at the vertices to be joined.

\begin{theorem}[{\cite[Theorem~2]{KKT16}, 
% \cite[Theorem~4.1]{RS20}
\cite[Theorem 5.3]{BK12}}]
\label{thm:joining}
Let $\Gamma$ be a compact metric graph, $q\in L^\infty(\Gamma)$ real, $v_1,v_2\in \cV$ with $v_1\neq v_2$ and $H$ the Schr\"odinger operator on $\Gamma$ with potential $q$, arbitrary self-adjoint coupling conditions at the vertices $v\in \cV\setminus\{v_1,v_2\}$, and $\delta$ coupling conditions with strenghs $\alpha_{v_1},\alpha_{v_2}\in\R$ at $v_1$ and $v_2$, respectively. Denote by $\widetilde \Gamma$ the graph obtained from $\Gamma$ by joining $v_1$ and $v_2$ to form one single vertex $v_0$. Let $\widetilde H$ be the self-adjoint Schr\"odinger operator in $L^2 (\widetilde \Gamma)$ having the same potential and the same coupling conditions as $H$ at all vertices apart from $v_0$ and satisfying a $\delta$ coupling condition with strength $\alpha_{v_0} :=\alpha_{v_1} + \alpha_{v_2}$ at $v_0$. Then
\begin{align*}%\label{eq:yeah}
 \lambda_k (H) \leq \lambda_k (\widetilde H)
\end{align*}
holds for all $k\in\N$.
\end{theorem}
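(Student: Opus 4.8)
The plan is to use the min-max principle together with a natural way of comparing quadratic forms. The key observation is that joining two vertices is a \emph{restriction} at the level of the form domain: the form domain of $\widetilde H$ (functions on $\widetilde\Gamma$, continuous at $v_0$ because a $\delta$ condition is imposed there) embeds isometrically into the form domain of $H$ (functions on $\Gamma$, continuous at $v_1$ and at $v_2$ separately). Indeed, since $\widetilde\Gamma$ and $\Gamma$ are the same metric space away from the vertices $v_0$, $v_1$, $v_2$, and a function continuous at $v_0$ is in particular continuous at $v_1$ and $v_2$ when these are regarded as separate vertices, every $f\in\dom\widetilde h$ may be read as an element of $\dom h$. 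Under this identification $\int_{\widetilde\Gamma}|f|^2=\int_\Gamma|f|^2$, and the form values agree as well.

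The main computation is to check that $\widetilde h(f)=h(f)$ for $f\in\dom\widetilde h$. The Dirichlet-integral part $\int|f'|^2$ and the potential part $\int q|f|^2$ are literally unchanged, and the vertex terms at all vertices other than $v_0,v_1,v_2$ are unchanged. So it remains to compare the $\delta$-vertex contributions. By Lemma~\ref{lem:forms}(b), the term at $v_1$ is $\alpha_{v_1}|f(v_1)|^2$ and at $v_2$ is $\alpha_{v_2}|f(v_2)|^2$, while at $v_0$ it is $\alpha_{v_0}|f(v_0)|^2$. Since $f$ is continuous at $v_0$, its value $f(v_0)$ coincides with both $f(v_1)$ and $f(v_2)$ (these are all the common boundary value on the adjacent edges), and $\alpha_{v_0}=\alpha_{v_1}+\alpha_{v_2}$, so
\begin{align*}
 \alpha_{v_0}|f(v_0)|^2 = \alpha_{v_1}|f(v_1)|^2 + \alpha_{v_2}|f(v_2)|^2,
\end{align*}
and hence $\widetilde h(f)=h(f)$.

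With this in hand the conclusion is immediate from the min-max principle. Given any $k$-dimensional subspace $\widetilde F\subseteq\dom\widetilde h$, its image $F$ under the above identification is a $k$-dimensional subspace of $\dom h$ on which the Rayleigh quotients of $h$ and $\widetilde h$ coincide pointwise; taking $\widetilde F$ to be a minimizing subspace for $\lambda_k(\widetilde H)$ yields
\begin{align*}
 \lambda_k(H) \leq \max_{0\neq f\in F}\frac{h(f)}{\int_\Gamma|f|^2} = \max_{0\neq f\in\widetilde F}\frac{\widetilde h(f)}{\int_{\widetilde\Gamma}|f|^2} = \lambda_k(\widetilde H),
\end{align*}
for every $k\in\N$.

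I do not expect a serious obstacle here: the only point requiring care is the bookkeeping of which functions lie in which form domain — specifically, that imposing continuity at the merged vertex $v_0$ is strictly stronger than imposing it separately at $v_1$ and $v_2$, so that the inclusion $\dom\widetilde h\hookrightarrow\dom h$ holds in this direction (and generally \emph{not} the reverse), which is exactly why the inequality goes $\lambda_k(H)\le\lambda_k(\widetilde H)$ and not the other way. If one wanted, one could phrase the argument dually via the operator domains, but the form/min-max route is cleanest and needs no elliptic regularity.
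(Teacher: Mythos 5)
Your proof is correct and follows essentially the same route as the paper's: the inclusion $\dom\widetilde h\subseteq\dom h$, the identity $\widetilde h(f)=h(f)$ via $f(v_0)=f(v_1)=f(v_2)$ and $\alpha_{v_0}=\alpha_{v_1}+\alpha_{v_2}$, and then the min-max principle. No issues.
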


\begin{proof}
The quadratic forms $h$ and $\widetilde h$ associated with $H$ and $\widetilde H$, respectively, satisfy $\dom\widetilde{h}\subseteq \dom h$. In particular, each $f \in \dom \widetilde h$ satisfies $f (v_0) = f (v_1) = f (v_2)$, and by Lemma~\ref{lem:forms} we get
\begin{align*}
 \widetilde h (f) - h (f) & = \alpha_{v_0} |f (v_0)|^2 - \alpha_{v_1} |f (v_1)|^2 - \alpha_{v_2} |f (v_2)|^2 = 0.
\end{align*}
By the min-max principle, this leads to the assertion.
\end{proof}

While joining two $\delta$ couplings in the above way may only increase the eigenvalues, the situation is more involved for $\delta'$ couplings.

\begin{theorem}[{\cite[Theorem 4.2]{RS20}}]
\label{thm:joining_delta'}
Let $\Gamma$ be a compact metric graph, $q\in L^\infty(\Gamma)$ real, $v_1,v_2\in \cV$ with $v_1\neq v_2$ and let $H$ be the Schr\"odinger operator in $L^2(\Gamma)$ with potential $q$, arbitrary self-adjoint coupling conditions at the vertices $v\in \cV\setminus\{v_1,v_2\}$, and $\delta'$ coupling conditions with strenghs $\beta_{v_1},\beta_{v_2}\in\R$ at $v_1$ and $v_2$, respectively. Denote by $\widetilde \Gamma$ the graph obtained from $\Gamma$ by joining $v_1$ and $v_2$ to form one single vertex $v_0$. Furthermore, let $\widetilde H$ be the self-adjoint Schr\"odinger operator in $L^2 (\widetilde \Gamma)$ having the same potential and the same coupling conditions as $H$ at all vertices apart from $v_0$ and satisfying a $\delta'$ coupling condition with strength $\beta_{v_0} :=\beta_{v_1} + \beta_{v_2}$ at $v_0$.
\begin{enumerate}
 \item If $\beta_{v_1},\beta_{v_2} \geq 0$, or $\beta_{v_1}\cdot \beta_{v_2} < 0$ and $\beta_{v_0} < 0$, then $\lambda_k (\widetilde H) \leq \lambda_k (H)$ for all $k \in \N$.
 \item If $\beta_{v_1},\beta_{v_2} < 0$, or $\beta_{v_1}\cdot \beta_{v_2} < 0$ and $\beta_{v_0} \geq 0$, then $\lambda_k (H) \leq \lambda_k (\widetilde H)$ for all $k \in \N$.
\end{enumerate}
\end{theorem}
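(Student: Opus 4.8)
The plan is to compare the two quadratic forms via the min-max principle, exactly as in the proof of Theorem \ref{thm:joining}, but keeping track of the fact that $\delta'$ conditions impose \emph{no} constraint on the function values; hence here the inclusion of form domains goes the opposite way, $\dom h \subseteq \dom \widetilde h$, because the vertex conditions at $v_1, v_2$ (which place no restriction on $\dom h$) are simply reorganized into the single vertex $v_0$ (which places no restriction on $\dom \widetilde h$ either), so in fact $\dom h = \dom \widetilde h$ after the natural identification. Writing $s_i := \sum_{j=1}^{\deg(v_i)} F_j(v_i)$ for $i = 1,2$ and noting that $\sum_{j=1}^{\deg(v_0)} F_j(v_0) = s_1 + s_2$ by the construction $\cE_{v_0} = \cE_{v_1} \cup \cE_{v_2}$, Lemma \ref{lem:forms}(iv) gives, for every $f$ in the common domain,
\begin{align*}
 \widetilde h(f) - h(f) = \frac{1}{\beta_{v_0}} |s_1 + s_2|^2 - \frac{1}{\beta_{v_1}} |s_1|^2 - \frac{1}{\beta_{v_2}} |s_2|^2,
\end{align*}
and the whole theorem reduces to deciding the sign of this quadratic form in the two complex (equivalently, two real) variables $s_1, s_2$ under each of the stated hypotheses on the signs of $\beta_{v_1}, \beta_{v_2}, \beta_{v_0}$.

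The main work is therefore the elementary analysis of the Hermitian form $q(s_1,s_2) := \frac{1}{\beta_{v_0}}|s_1+s_2|^2 - \frac{1}{\beta_{v_1}}|s_1|^2 - \frac{1}{\beta_{v_2}}|s_2|^2$ with $\beta_{v_0} = \beta_{v_1} + \beta_{v_2}$. I would expand $|s_1+s_2|^2 = |s_1|^2 + |s_2|^2 + 2\Real(s_1\overline{s_2})$ and collect terms, obtaining the $2\times 2$ Hermitian matrix
\begin{align*}
 M = \begin{pmatrix} \frac{1}{\beta_{v_0}} - \frac{1}{\beta_{v_1}} & \frac{1}{\beta_{v_0}} \\ \frac{1}{\beta_{v_0}} & \frac{1}{\beta_{v_0}} - \frac{1}{\beta_{v_2}} \end{pmatrix}.
\end{align*}
Its determinant simplifies (using $\beta_{v_0} = \beta_{v_1}+\beta_{v_2}$) to
\begin{align*}
 \det M = \Bigl(\frac{1}{\beta_{v_0}}-\frac{1}{\beta_{v_1}}\Bigr)\Bigl(\frac{1}{\beta_{v_0}}-\frac{1}{\beta_{v_2}}\Bigr) - \frac{1}{\beta_{v_0}^2} = \frac{1}{\beta_{v_1}\beta_{v_2}} - \frac{1}{\beta_{v_0}}\Bigl(\frac{1}{\beta_{v_1}}+\frac{1}{\beta_{v_2}}\Bigr) + \frac{1}{\beta_{v_0}^2} - \frac{1}{\beta_{v_0}^2},
\end{align*}
and after inserting $\frac{1}{\beta_{v_1}}+\frac{1}{\beta_{v_2}} = \frac{\beta_{v_0}}{\beta_{v_1}\beta_{v_2}}$ one finds $\det M = 0$. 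Thus $M$ is always singular, $q$ is semidefinite (never indefinite), and its sign is governed by the trace $\tr M = \frac{2}{\beta_{v_0}} - \frac{1}{\beta_{v_1}} - \frac{1}{\beta_{v_2}} = \frac{2}{\beta_{v_0}} - \frac{\beta_{v_0}}{\beta_{v_1}\beta_{v_2}} = \frac{2\beta_{v_1}\beta_{v_2} - \beta_{v_0}^2}{\beta_{v_0}\beta_{v_1}\beta_{v_2}} = -\frac{(\beta_{v_1}-\beta_{v_2})^2}{\beta_{v_0}\beta_{v_1}\beta_{v_2}}$ (the last step again using $\beta_{v_0}^2 = \beta_{v_1}^2 + 2\beta_{v_1}\beta_{v_2} + \beta_{v_2}^2$). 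Hence $q \geq 0$ precisely when $\beta_{v_0}\beta_{v_1}\beta_{v_2} \leq 0$, and $q \leq 0$ precisely when $\beta_{v_0}\beta_{v_1}\beta_{v_2} \geq 0$.

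It remains to check that the sign conditions on the triple $(\beta_{v_1},\beta_{v_2},\beta_{v_0})$ in the two cases of the theorem match these two alternatives. In case (i): if $\beta_{v_1},\beta_{v_2}\geq 0$ then $\beta_{v_0}\geq 0$ and the product is $\geq 0$ — but wait, one must be careful since we need $q \geq 0$; here I would instead verify directly (as in \cite[Theorem 4.2]{RS20}) that $\beta_{v_1},\beta_{v_2}\geq 0$ forces $\widetilde h(f)\geq$ nothing useful unless one handles the degenerate $\beta = 0$ boundaries separately — so the clean route is: $\beta_{v_1},\beta_{v_2} > 0 \Rightarrow \beta_{v_0}\beta_{v_1}\beta_{v_2} > 0$; but we want $\lambda_k(\widetilde H)\leq\lambda_k(H)$, i.e. $\widetilde h \leq h$, i.e. $q \leq 0$, consistent with $\det M = 0$ and $\tr M = -(\beta_{v_1}-\beta_{v_2})^2/(\beta_{v_0}\beta_{v_1}\beta_{v_2}) \leq 0$. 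Likewise $\beta_{v_1}\beta_{v_2}<0$ with $\beta_{v_0}<0$ gives product $>0$, again $q\leq 0$, and $\dom h = \dom \widetilde h$ so the min-max principle gives $\lambda_k(\widetilde H)\leq\lambda_k(H)$; in case (ii), $\beta_{v_1},\beta_{v_2}<0$ gives product $<0$ hence $q\geq 0$ hence $\lambda_k(H)\leq\lambda_k(\widetilde H)$, and similarly $\beta_{v_1}\beta_{v_2}<0$ with $\beta_{v_0}\geq 0$ gives product $\leq 0$ hence $q\geq 0$. The genuinely delicate points, which deserve care rather than difficulty, are (a) the boundary cases where one of the $\beta$'s vanishes (anti-Kirchhoff): then the corresponding vertex term must be read off from Lemma \ref{lem:forms}(iii) as the \emph{constraint} $\sum_j F_j(v) = 0$ rather than a form contribution, and one argues by an approximation/monotonicity step or directly with the constrained form domain; and (b) making sure the hypotheses $\beta_{v_0} = \beta_{v_1}+\beta_{v_2} \neq 0$ needed to write $1/\beta_{v_0}$ is either in force or handled as a degenerate case. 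I would organize the write-up as: first establish $\dom h = \dom \widetilde h$ and the displayed identity for $\widetilde h - h$; second prove the two-variable semidefiniteness lemma above; third dispatch the sign bookkeeping in the four sub-cases; fourth treat the $\beta = 0$ endpoints. The main obstacle is purely the case-by-case sign analysis together with the anti-Kirchhoff degeneracies — there is no analytic difficulty beyond the min-max principle.
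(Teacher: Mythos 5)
Your proposal follows essentially the same route as the paper: the identity for $\widetilde h(f) - h(f)$ on the common form domain followed by a sign analysis (the paper merely states that a case distinction with respect to the signs yields the statements, whereas you make it explicit via the singular $2\times 2$ matrix $M$), and the anti-Kirchhoff endpoints are treated separately --- the paper does this directly through the form-domain inclusions $\dom h\subseteq\dom\widetilde h$ (when $\beta_{v_1}=0$ or $\beta_{v_2}=0$) and $\dom\widetilde h\subseteq\dom h$ (when $\beta_{v_0}=0$) together with $\widetilde h-h=0$ on the smaller domain, which is cleaner than an approximation argument. One arithmetic slip in your trace computation: $2\beta_{v_1}\beta_{v_2}-\beta_{v_0}^2=-(\beta_{v_1}^2+\beta_{v_2}^2)$, not $-(\beta_{v_1}-\beta_{v_2})^2$; this does not affect any sign conclusion, since the numerator is still strictly positive, but with your formula you would wrongly conclude $M=0$ when $\beta_{v_1}=\beta_{v_2}$.
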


\begin{proof}
Consider again the quadratic forms $h$ and $\widetilde h$ corresponding to $H$ and $\widetilde H$, respectively. First, consider the case $\beta_{v_1},\beta_{v_2},\beta_{v_0}\neq 0$. Then $\dom h = \dom\widetilde{h}$, and for $f\in\dom h$ we have
\begin{align*}
 \widetilde{h}(f) - h(f) & = \frac{1}{\beta_{v_0}}\left|\sum_{j=1}^{\deg(v_0)} F_j (v_0)\right|^2 - \frac{1}{\beta_{v_1}}\left|\sum_{j=1}^{\deg(v_1)} F_j (v_1)\right|^2 - \frac{1}{\beta_{v_2}}\left|\sum_{j=1}^{\deg(v_2)} F_j (v_2)\right|^2.
\end{align*}
Now, a case distinction with respect to the signs of $\beta_{v_1},\beta_{v_2},\beta_{v_0}$ yield the corresponding statements.

For $\beta_{v_0} = 0$, we have $\beta_{v_1} = -\beta_{v_2}$ as well as $\dom \widetilde{h} \subseteq \dom h$ and $\sum_{j=1}^{\deg(v_0)} F_j(v_0) = 0$ for $f\in \dom\widetilde{h}$, and therefore
\[\sum_{j=1}^{\deg(v_1)} F_j (v_1) = - \sum_{j=1}^{\deg(v_2)} F_j (v_2).\]
Thus, $\widetilde{h}(f) - h(f) = 0$ for all $f\in \dom \widetilde{h}$ and the assertion follows.

In case $\beta_{v_1}=0$ or $\beta_{v_2}=0$ we have $\dom h \subseteq \dom \widetilde{h}$ and again $\widetilde{h}(f) - h(f) = 0$ for all $f\in \dom h$.
\end{proof}

\subsection{Unfolding parallel and pendant edges}%\marginpar{\tiny JR: Symmetrizing parallel edges habe ich doch nicht gemacht, da das Resultat in \cite{BKKM19} in dem Fall von Eigenschaften der Eigenfunktion abhängt.}

In this section we consider a transformation, called unfolding, which replaces several parallel edges or several pendant edges by one single edge. We say that edges $e_1, \dots, e_r \in \cE$ are {\em parallel} if all of them are incident to the same two vertices. That is, either there exist $v_1, v_2 \in \cV$ such that $v_1 \neq v_2$ and $\{e_1, \dots, e_r\} \subseteq \cE_{v_1} \cap \cE_{v_2}$, or each of the edges $e_1, \dots, e_r$ is a loop attached to the same vertex $v \in \cV$. 

\begin{definition}%\label{def:unfolding}
Let $\Gamma$ be a compact metric graph in which the edges $e_1, \dots, e_r$ are parallel. We say that the graph $\widetilde \Gamma$ is obtained from $\Gamma$ by {\em unfolding the parallel edges $e_1, \dots, e_r$} if $\widetilde \Gamma$ has the same set of vertices as $\Gamma$, the same set of edges and edge lengths as $\Gamma$ except for $e_1, \dots, e_r$ and one edge $\widehat e$ of length $L (e_1) + \dots + L (e_r)$ incident to the same vertices as each of the edges $e_1, \dots, e_r$; cf.\ Figure \ref{fig:unfolding_parallel}.
\end{definition}

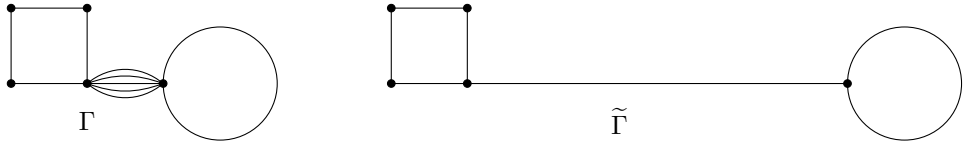
\begin{figure}[htb]
  \centering
  \begin{tikzpicture}
      \draw[fill] (0,0) circle(0.05);
      \draw[fill] (1,0) circle(0.05);
      \draw[fill] (1,1) circle(0.05);
      \draw[fill] (0,1) circle(0.05);
      \draw (0,0)--(1,0)--(1,1)--(0,1)--(0,0);
      \draw (1,-0.5) node[]{$\Gamma$};
      \draw[fill] (2,0) circle(0.05);
      \draw (1,0) to[out=0, in=180] (2,0);
      %\draw (1,0) to[out=10, in=170] (2,0);
      \draw (1,0) to[out=20, in=160] (2,0);
      \draw (1,0) to[out=40, in=140] (2,0);
      %\draw (1,0) to[out=-10, in=190] (2,0);
      \draw (1,0) to[out=-20, in=200] (2,0);
      \draw (1,0) to[out=-40, in=220] (2,0);
      \draw (2.75,0) circle(0.75);
      
    \begin{scope}[shift={(5,0)}]
      \draw[fill] (0,0) circle(0.05);
      \draw[fill] (1,0) circle(0.05);
      \draw[fill] (1,1) circle(0.05);
      \draw[fill] (0,1) circle(0.05);
      \draw (0,0)--(1,0)--(1,1)--(0,1)--(0,0);
      \draw (3,-0.5) node[]{$\widetilde\Gamma$};
      \draw[fill] (6,0) circle(0.05);
      \draw (1,0) to[out=0, in=180] (6,0);
      \draw (6.75,0) circle(0.75);
    \end{scope}
  \end{tikzpicture}
  \caption{Unfolding parallel edges in $\Gamma$.}
  \label{fig:unfolding_parallel}
\end{figure}

The following theorem will be formulated for $\delta$ vertex conditions. It was shown more generally, including the lowest non-trivial eigenvalue for continuity-Kirchhoff vertex conditions, in \cite[Theorem 3.18]{BKKM19}. In contrast to \cite{BKKM19}, we admit potentials on the edges and demonstrate that the proof from \cite{BKKM19} carries over to this case.

\begin{theorem}\label{thm:unfolding}
Let $\Gamma$ be a compact metric graph in which the edges $e_1, \dots, e_r$ are parallel. Let $H$ be a Schrödinger operator on $\Gamma$ with real potential $q \in L^\infty (\Gamma)$ on the edges such that 
\begin{align*}
 q |_{e_j} \geq 0, \quad j = 1, \dots, r,
\end{align*}
and with $\delta$ vertex conditions at all vertices. Furthermore, let $\widetilde \Gamma$ be the graph obtained by unfolding the parallel edges $e_1, \dots, e_r$, and let $\widetilde H$ be the Schrödinger operator on~$\widetilde \Gamma$ with the same potential as for $H$ on all edges except for the new edge $\widehat e$ and the constant zero-potential on $\widehat e$. Moreover, for $\widetilde H$ we assume that on all vertices $\delta$ coupling conditions with the same strengths as for $H$ are imposed. Then
\begin{align*}
 \lambda_1 (\widetilde H) \leq \lambda_1 (H)
\end{align*}
holds. If $\lambda_1 (H) \neq 0$, then even $\lambda_1 (\widetilde H) < \lambda_1 (H)$.
\end{theorem}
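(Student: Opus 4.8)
The plan is to compare the two operators via their quadratic forms and the min-max principle, using a test function on $\widetilde{\Gamma}$ built from the ground state eigenfunction on $\Gamma$. First I would take $f \in \ker(H - \lambda_1(H))$; by Theorem~\ref{thm:courant} this eigenfunction is unique up to scalars, has no zeros on $\Gamma$, and may be chosen positive, so in particular $f$ has a definite sign and a well-defined nonzero value $f(v_i)$ at each of the two vertices $v_1, v_2$ incident to the parallel edges $e_1, \dots, e_r$. From the restrictions $f|_{e_1}, \dots, f|_{e_r}$ I would construct a single function $\widehat{g}$ on the new edge $\widehat e$ (of length $L(e_1) + \dots + L(e_r)$) by concatenating the pieces $f|_{e_j}$ end to end; one must orient the pieces consistently so that at each internal junction of the concatenation the boundary values of $\widehat g$ and of $\widehat g'$ match up (this is where the fact that all $e_j$ join the same two vertices $v_1, v_2$ is used, since all the pieces share the common endpoint values $f(v_1)$ and $f(v_2)$), and so that at the two ends of $\widehat e$ the values agree with $f(v_1)$ and $f(v_2)$ respectively. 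Setting $\widetilde f := f$ on all edges other than $e_1, \dots, e_r$ and $\widetilde f := \widehat g$ on $\widehat e$ then produces $\widetilde f \in \widetilde H^1(\widetilde\Gamma)$ which is continuous at every vertex and satisfies $\widetilde F(v) = F(v)$ for all $v$, hence lies in $\dom \widetilde h$.

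Next I would compute $\widetilde h(\widetilde f)$. Since $\widetilde f$ agrees with $f$ off the parallel edges, and the $\delta$ strengths and boundary values at the vertices are unchanged, the vertex terms and the off-edge contributions to $\widetilde h(\widetilde f)$ coincide with those in $h(f)$. On $\widehat e$ the derivative term $\int_{\widehat e} |\widehat g'|^2 = \sum_{j=1}^r \int_{e_j} |f'|^2$ exactly by the concatenation, and the $L^2$-norm also splits: $\int_{\widehat e} |\widehat g|^2 = \sum_{j=1}^r \int_{e_j} |f|^2$, so $\int_{\widetilde\Gamma} |\widetilde f|^2 = \int_\Gamma |f|^2$. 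The only difference is in the potential term: on $\widehat e$ the potential is zero, whereas $h(f)$ contains $\sum_{j=1}^r \int_{e_j} q|f|^2 \geq 0$ by the hypothesis $q|_{e_j} \geq 0$. Therefore
\begin{align*}
 \widetilde h(\widetilde f) = h(f) - \sum_{j=1}^r \int_{e_j} q|f|^2 \leq h(f) = \lambda_1(H) \int_\Gamma |f|^2 = \lambda_1(H) \int_{\widetilde\Gamma} |\widetilde f|^2,
\end{align*}
and the min-max principle gives $\lambda_1(\widetilde H) \leq \lambda_1(H)$.

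For the strict inequality when $\lambda_1(H) \neq 0$, I would argue as follows. If $\sum_j \int_{e_j} q|f|^2 > 0$ then the displayed chain is already strict and we are done. Otherwise $q|_{e_j} |f|^2 = 0$ a.e.\ on each $e_j$; since $f$ has no zeros, $q \equiv 0$ on every parallel edge, and then $\widetilde h(\widetilde f) = h(f) = \lambda_1(H)\int_{\widetilde\Gamma}|\widetilde f|^2$, so $\widetilde f$ is a minimizer of the Rayleigh quotient for $\widetilde H$, i.e.\ $\widetilde f \in \ker(\widetilde H - \lambda_1(\widetilde H))$ with $\lambda_1(\widetilde H) = \lambda_1(H)$. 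In particular $\widetilde f$ solves $-\widetilde f'' = \lambda_1(H)\widetilde f$ on $\widehat e$. But on the original graph, each piece $f|_{e_j}$ also solves $-f'' = \lambda_1(H) f$ on $e_j$ with matching Cauchy data at the shared endpoints; if some $e_j$ and $e_{j'}$ (or an $e_j$ and the concatenation junction) have equal Cauchy data at a common point while $\lambda_1(H) \neq 0$, then the two edges, together with the graph attached at $v_1, v_2$, admit a nontrivial solution concentrated so as to force an eigenfunction that vanishes somewhere or is odd — contradicting the no-zero / simplicity conclusion of Theorem~\ref{thm:courant} (concretely: $f|_{e_1} - f|_{e_2}$, suitably oriented, extends by zero to a nontrivial element of $\ker(H - \lambda_1(H))$ that vanishes on the rest of $\Gamma$, which is impossible when $\Gamma$ is connected and $\lambda_1(H)\neq 0$ — if $r=1$ there is nothing to unfold, so $r\geq 2$). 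This yields the desired contradiction, hence $\lambda_1(\widetilde H) < \lambda_1(H)$.

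The main obstacle I expect is the bookkeeping in the strictness argument: making precise why, when $q$ vanishes on the parallel edges, $\lambda_1(\widetilde H)=\lambda_1(H)$ is incompatible with $\lambda_1(H)\neq 0$. The clean way is to observe that for $\lambda = \lambda_1(H) > 0$ (recall $\lambda_1(H) \geq 0$ always since this is needed for $\widetilde f$ to be a genuine minimizer — actually $\lambda_1(H)\neq 0$ together with positivity of the form is what we get here, or one notes $\lambda_1(H)>0$) no nonconstant sign-definite solution of $-u''=\lambda u$ exists on an interval, so the positive eigenfunction $f$ cannot be realized with the pieces simply glued into one long edge without creating a sign change or a zero somewhere on $\widetilde\Gamma$; turning this heuristic into the explicit test-function contradiction above (difference of two parallel pieces extended by zero) is the step requiring care. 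The concatenation/orientation construction in the first paragraph is routine but must be stated carefully enough that $C^1$-matching at the internal junctions is manifest.
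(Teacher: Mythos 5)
Your construction of the test function on the unfolded edge does not work, and this is a genuine gap rather than a bookkeeping issue. Write $a := f(v_1)$ and $b := f(v_2)$; every piece $f|_{e_j}$ takes the value $a$ at its $v_1$-end and $b$ at its $v_2$-end. In any concatenation of all $r$ pieces that is continuous at the internal junctions, the sequence of junction values must therefore alternate $a, b, a, b, \dots$, so the terminal value of $\widehat g$ is $b$ when $r$ is odd but $a$ when $r$ is even. Since the $\delta$ condition at $v_2$ forces the test function to be continuous there with value $b$, your $\widetilde f$ fails to lie in $\dom \widetilde h$ whenever $r$ is even and $a \neq b$ --- in particular in the base case $r = 2$, to which the general case is usually reduced. (Your parenthetical claim that the derivatives also match at the junctions is false as well, though harmless, since only $H^1$-matching is needed.) The paper avoids this by \emph{not} concatenating: it chooses an interior point $x_0$ of $e_1$ with $f_1(x_0) = a$, splices $f_2$ into $f_1$ at $x_0$ (where the values agree), and replaces the discarded tail of $f_1$ by the constant $b$ (or $a$). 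This necessarily changes both $\int |\widetilde f'|^2$ and $\int |\widetilde f|^2$ relative to $f$, which is why the paper must split into the cases $\lambda_1(H) \geq 0$ and $\lambda_1(H) < 0$ and argue with the Rayleigh quotient in each; your assertion that ``$\lambda_1(H) \geq 0$ always'' is false here, since the $\delta$ strengths may be negative (cf.\ Theorem~\ref{thm:negDelta}).

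A secondary problem is the strictness argument. Your proposed contradiction via $f|_{e_1} - f|_{e_2}$ extended by zero presupposes $L(e_1) = L(e_2)$ and that $f_1 \neq f_2$, neither of which is given, and the preceding reduction to ``$\widetilde f$ is a minimizer'' already relies on the flawed test function. In the paper, strictness comes directly from the observation that when $\lambda_1(H) \neq 0$ the eigenfunction cannot be constant on the relevant portion of $e_1$, so the discarded term $\int_{x_0}^{L(e_1)} |f_1'|^2$ (respectively the gain in the $L^2$-norm) is strictly positive, making the Rayleigh-quotient inequality strict without any unique-continuation argument.
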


\begin{proof}
We will prove the theorem in the case $r = 2$, i.e.\ for unfolding two parallel edges $e_1, e_2$. The general case can be obtained by iterating the following procedure. Let $v_1$ and $v_2$ be the vertices to which $e_1$ and $e_2$ are incident. We consider only the case that $v_1$ and $v_2$ are distinct vertices; the case in which $v_1 = v_2$, i.e.\ $e_1$ and $e_2$ are loops, is completely analogous.

By Theorem \ref{thm:courant} we can choose $f \in \ker (H - \lambda_1 (H))$ strictly positive on $\Gamma$. Consider the restrictions $f_1$ and $f_2$ of $f$ to $e_1$ and $e_2$, respectively. Let, without loss of generality,
\begin{align*}
 a := f_1 (0) = f_2 (0) > 0, \qquad b := f_1 (L (e_1)) = f_2 (L (e_2)) > 0,
\end{align*}
and assume without loss of generality $a < b$. Moreover, choose $x_0 \in [0, L (e_1)]$ such that
\begin{align*}
 f_1 (x_0) = a \quad \text{and} \quad f_1 ([x_0, L (e_1)]) = [a, b].
\end{align*} 
We distinguish two cases.

{\bf Case 1:} $\lambda_1 (H) \geq 0$. In this case, the quadratic form $h$ corresponding to $H$ satisfies $h (f) \geq 0$. Define a function $\widetilde f$ on the graph $\widetilde H$ by letting it be equal to $f$ on all edges except $\widehat e$ (after identification of these edges with edges of $\Gamma$ in the natural way). On $\widehat e$, identified as $[0, L (e_1) + L (e_2)]$, we define $\widehat f$ by inserting the edge $e_2$ and the function $f_2$ on it into $e_1$ at the point $x_0$ and defining $\widetilde f$ to be equal to $f_1$ to the left of $x_0$ (if not $x_0 = 0$) and constantly equal to $b$ to the right. That is,
\begin{align*}
 \widetilde f_{\widehat e} (x) = \begin{cases}
                                  f_1 (x), & x \in [0, x_0],\\
                                  f_2 (x - x_0), & x \in [x_0, x_0 + L (e_2)], \\
                                  b, & x \in [x_0 + L (e_2), L (e_1) + L (e_2)].
                                 \end{cases}
\end{align*}
This construction is made such that the function $\widetilde f$ belongs to $\widetilde H^1 (\widetilde \Gamma)$ and satisfies 
\begin{align*}
 \widetilde f (v_1) = a = f (v_1) \quad \text{and} \quad \widetilde f (v_2) = b = f (v_2),
\end{align*}
in particular, it belongs to the domain of the quadratic form $\widetilde h$ corresponding to $\widetilde H$. Furthermore,
\begin{align*}
 \widetilde h (\widetilde f) & = h (f) - \int_{e_1 \cup e_2} \big(|f'|^2 + q |f|^2 \big) + \int_{\widehat e} |\widetilde f'|^2 \leq h (f),
\end{align*}
where we have used that $q$ is non-negative on $e_1 \cup e_2$ as well as
\begin{align}\label{eq:formUnfold}
 \int_{e_1 \cup e_2} |f'|^2 \geq \int_{\widehat e} |\widetilde f'|^2.
\end{align}
Moreover,
\begin{align}\label{eq:normUnfold}
 \int_{\widetilde \Gamma} |\widetilde f|^2 & = \int_\Gamma |f|^2 - \int_{e_1 \cup e_2} |f|^2 + \int_{\widehat e} |\widetilde f|^2 \geq \int_\Gamma |f|^2
\end{align}
since 
\begin{align*}
 \int_{\widehat e} |\widetilde f|^2 & = \int_0^{x_0} |f_1|^2 + \int_0^{L (e_2)} |f_2|^2 + b^2 (L (e_1) - x_0) \geq \int_{e_1} |f_1|^2 + \int_{e_2} |f_2|^2.
\end{align*}
From \eqref{eq:formUnfold}, \eqref{eq:normUnfold} and $h (f) \geq 0$ we obtain
\begin{align}\label{eq:naAlso!}
 \lambda_1 (\widetilde H) & \leq \frac{\widetilde h (\widetilde f)}{\int_{\widetilde \Gamma} |\widetilde f|^2} \leq \frac{h (f)}{\int_\Gamma |f|^2} = \lambda_1 (H).
\end{align}

{\bf Case 2:} $\lambda_1 (H) < 0$. In this case we do a little modification of the construction in Case~1 by defining
\begin{align*}
 \widetilde f_{\widehat e} (x) = \begin{cases}
                                  f_1 (x), & x \in [0, x_0],\\
                                  a, & x \in [x_0, L (e_1)], \\
                                  f_2 (x - L (e_1)), & x \in [L (e_1), L (e_1) + L (e_2)].
                                 \end{cases}
\end{align*}
Then computations analogous to those in Case~1 yield
\begin{align*}
 \widetilde h (\widetilde f) \leq h (f) < 0
\end{align*}
and
\begin{align*}
 \int_{\widetilde \Gamma} |\widetilde f|^2 \leq \int_\Gamma |f|^2.
\end{align*}
The last two relations lead again to \eqref{eq:naAlso!}.

If $\lambda_1 (H) \neq 0$, then $f_1$ is not constant and, hence, the inequality \eqref{eq:naAlso!}, respectively its counterpart in Case 2, is strict. This completes the proof.
\end{proof}

The argument carried out in the above proof relies crucially on the continuity of the eigenfunction at the vertices. The latter allows to construct a test function which is continuous on the whole unfolded edge $\widehat e$. For vertex conditions that do not require continuity at the vertices, the assertion of Theorem \ref{thm:unfolding} need not be true, as the next example shows.

\begin{example}\label{ex:unfoldingAK}
Consider a graph $\Gamma$ consisting of two vertices $v_1$, $v_2$ and two parallel edges of arbitrary finite lengths $\ell_1, \ell_2$ connecting them. We let $H$ be the Schrödinger operator on $\Gamma$ corresponding to $q = 0$ identically on the edges and anti-Kirchhoff vertex conditions at both $v_1$ and $v_2$. Then $\lambda_1 (H) = 0$, the corresponding eigenspace being spanned by the function that is constantly 1 on the first edge and $-1$ on the second one. On the other hand, after unfolding $\Gamma$, we obtain the graph $\widetilde \Gamma$ isomorphic to an interval of length $\ell_1 + \ell_2$, see Figure \ref{fig:unfolding_parallel_example},
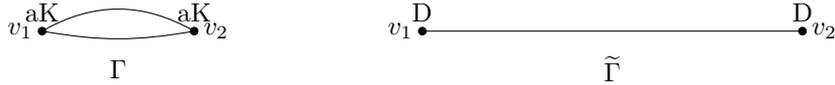
\begin{figure}[htb]
  \centering
  \begin{tikzpicture}
      \draw[fill] (0,0) circle(0.05) node[left]{$v_1$} node[above]{aK};
      \draw[fill] (2,0) circle(0.05) node[right]{$v_2$}node[above]{aK};
      \draw (0,0) to[out=-10, in=190] (2,0);
      \draw (0,0) to[out=30, in=150] (2,0);
      \draw (1,-0.5) node{$\Gamma$};
      
    \begin{scope}[shift={(5,0)}]
      \draw[fill] (0,0) circle(0.05)node[left]{$v_1$}node[above]{D};
      \draw[fill] (5,0) circle(0.05)node[right]{$v_2$}node[above]{D};
      \draw (0,0) to[out=0, in=180] (5,0);
      \draw (2.5,-0.5) node{$\widetilde{\Gamma}$};
    \end{scope}
  \end{tikzpicture}
  \caption{Unfolding parallel edges in the graph of Example \ref{ex:unfoldingAK}.}
  \label{fig:unfolding_parallel_example}
\end{figure}
and the Laplacian $\widetilde H$ on $\widetilde \Gamma$ subject to anti-Kirchhoff vertex conditions equals the Laplacian on the interval with Dirichlet boundary conditions at both end points. In particular,
\begin{align*}
 \lambda_1 (\widetilde H) > 0 = \lambda_1 (H).
\end{align*}

We would like to point out that the exact same construction works if the anti-Kirchhoff conditions in the above example are replaced by $\delta'$ vertex conditions with positive strengths.
\end{example}

\begin{openproblem}
Example \ref{ex:unfoldingAK} shows that for $\delta'$ vertex conditions in some cases unfolding parallel edges leads to an increase of the principal eigenvalue. Can one prove a general statement of this type for $\delta'$ vertex conditions, possibly depending on the signs of the coupling strengths?
\end{openproblem}

Next we consider a related surgical graph manipulation introduced in \cite{BKKM19}, called unfolding pendant edges; we call an edge $e$ {\em pendant} if it is incident to a vertex of degree one. 

\begin{definition}
Let $\Gamma$ be a compact metric graph in which the edges $e_1, \dots, e_r$ are pendant and all incident to the same vertex $v_0$; that is, $e_j$ is incident to $v_0$ and a vertex $v_j$ of degree one. We say that the graph $\widetilde \Gamma$ is obtained from $\Gamma$ by {\em unfolding the pendant edges $e_1, \dots, e_r$} if $\widetilde \Gamma$ has the same set of vertices as $\Gamma$ except for $v_1, \dots, v_r$ and the same set of edges except for $e_1, \dots, e_r$ and, in addition, an edge $\widehat e$ of length $L (e_1) + \dots + L (e_r)$ connecting $v_0$ to a new vertex $\widehat v$ of degree one; cf.\ Figure \ref{fig:unfolding_pendant}.
\end{definition}

\begin{figure}[htb]
  \centering
  \begin{tikzpicture}
      \draw[fill] (0,0) circle(0.05);
      \draw[fill] (1,0) circle(0.05) node[below]{$v_0$};
      \draw[fill] (1,1) circle(0.05);
      \draw[fill] (0,1) circle(0.05);
      \draw (0,0)--(1,0)--(1,1)--(0,1)--(0,0);
      \draw (1,-0.5) node[]{$\Gamma$};
      \draw[fill] (3,0) circle(0.05);
      \draw[fill] (2.41,1.41) circle(0.05);
      \draw[fill] (2.41,-1.41) circle(0.05);
      \draw (1,0)--(3,0);
      \draw (1,0)--(2.41,1.41);
      \draw (1,0)--(2.41,-1.41);
      
    \begin{scope}[shift={(5,0)}]
      \draw[fill] (0,0) circle(0.05);
      \draw[fill] (1,0) circle(0.05) node[below]{$v_0$};
      \draw[fill] (1,1) circle(0.05);
      \draw[fill] (0,1) circle(0.05);
      \draw (0,0)--(1,0)--(1,1)--(0,1)--(0,0);
      \draw (3,-0.5) node[]{$\widetilde\Gamma$};
      \draw[fill] (6,0) circle(0.05);
      \draw (1,0)--(6,0);
    \end{scope}
  \end{tikzpicture}
  \caption{Unfolding pendant edges in $\Gamma$.}
  \label{fig:unfolding_pendant}
\end{figure}
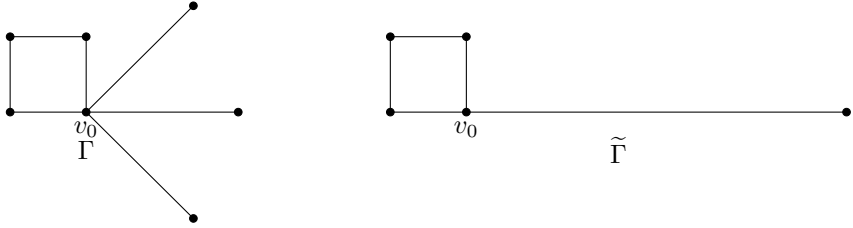

The following theorem is a variant of Theorem 3.18 (4) in \cite{BKKM19}, where $\delta$ or Dirichlet conditions are imposed at all vertices were considered and the case of the second eigenvalue for continuity-Kirchhoff conditions was included; on the other hand, the theorem was stated in the potential-free case in \cite{BKKM19}.

\begin{theorem}%\label{thm:unfoldingPendant}
Let $\Gamma$ be a compact metric graph in which the edges $e_1, \dots, e_r$ are pendant and incident to the same vertex $v_0$, and let $v_j$ denote the vertex distinct from $v_0$ to which $e_j$ is incident, $j = 1, \dots, r$. Let $H$ be a Schrödinger operator on $\Gamma$ with real potential $q \in L^\infty (\Gamma)$ on the edges such that 
\begin{align*}
 q |_{e_j} \geq 0, \quad j = 1, \dots, r,
\end{align*}
and a $\delta$ vertex condition with strength $\alpha_v$ at each vertex $v \in \cV$. We assume
\begin{align*}
 \alpha_{v_j} \geq 0, \quad j = 1, \dots, r.
\end{align*}
Furthermore, let $\widetilde \Gamma$ be the graph obtained by unfolding the pendant edges $e_1, \dots, e_r$ into one edge $\widehat e$ incident to a new vertex $\widehat v$ of degree one, and let $\widetilde H$ be the Schrödinger operator on $\widetilde \Gamma$ with the same potentials as for $H$ on all edges except the new edge $\widehat e$ and the constant zero-potential on $\widehat e$. Moreover, for $\widetilde H$ we assume that on all vertices except $\widehat v$ (but including $v_0$) $\delta$ vertex conditions with the same strengths as for $H$ are imposed, and $\widehat v$ is equipped with a Neumann boundary condition. Then
\begin{align*}
 \lambda_1 (\widetilde H) \leq \lambda_1 (H)
\end{align*}
holds. If $\lambda_1 (H) \neq 0$, then even $\lambda_1 (\widetilde H) < \lambda_1 (H)$ holds.
\end{theorem}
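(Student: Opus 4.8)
The plan is to follow the scheme of the proof of Theorem~\ref{thm:unfolding} and of \cite[Theorem~3.18(4)]{BKKM19}: manufacture a test function on $\widetilde\Gamma$ out of the ground state of $H$ and invoke the min-max principle for $\lambda_1$. Since $\delta$ conditions hold at every vertex, Theorem~\ref{thm:courant} furnishes an eigenfunction $f\in\ker(H-\lambda_1(H))$ with $f>0$ on $\Gamma$. Parametrise each $e_j$ as $[0,L(e_j)]$ with $v_0\leftrightarrow 0$ and $v_j\leftrightarrow L(e_j)$, write $f_j:=f|_{e_j}$, $a:=f(v_0)=f_j(0)>0$, $b_j:=f(v_j)=f_j(L(e_j))>0$, put $\ell:=L(e_1)+\dots+L(e_r)$, and identify $\widehat e$ with $[0,\ell]$ so that $v_0\leftrightarrow 0$ and $\widehat v\leftrightarrow\ell$. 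I would let $\widetilde f$ coincide with $f$ on every edge of $\Gamma$ other than $e_1,\dots,e_r$ and equal some $\widehat f\in H^1(0,\ell)$ on $\widehat e$. As the Neumann condition at $\widehat v$ is a natural boundary condition and $\widehat q\equiv 0$ on $\widehat e$, such a $\widetilde f$ belongs to $\dom\widetilde h$ precisely when $\widehat f(0)=a$ (continuity at $v_0$), and then, using $q|_{e_j}\ge0$ and $\alpha_{v_j}\ge0$,
\[
\widetilde h(\widetilde f)=h(f)+\int_0^{\ell}|\widehat f'|^2-\sum_{j=1}^{r}\Big(\int_{e_j}\big(|f_j'|^2+q|f_j|^2\big)+\alpha_{v_j}b_j^2\Big)\le h(f)+\int_0^{\ell}|\widehat f'|^2-\sum_{j=1}^{r}\int_{e_j}|f_j'|^2,
\]
while $\int_{\widetilde\Gamma}|\widetilde f|^2=\int_\Gamma|f|^2-\sum_{j=1}^r\int_{e_j}|f_j|^2+\int_0^\ell|\widehat f|^2$. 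The whole problem therefore reduces to producing $\widehat f$ with $\widehat f(0)=a$, with $\int_0^\ell|\widehat f'|^2\le\sum_j\int_{e_j}|f_j'|^2$, and with $\int_0^\ell|\widehat f|^2$ \emph{no smaller} than $\sum_j\int_{e_j}|f_j|^2$ when $\lambda_1(H)\ge0$ (so that the Rayleigh quotient of $\widetilde f$ is $\le\lambda_1(H)$, since $h(f)\ge0$), resp.\ \emph{no larger} when $\lambda_1(H)<0$ (so that dividing the negative number $\widetilde h(\widetilde f)\le h(f)$ by the smaller denominator again gives $\le\lambda_1(H)$); this is exactly the two-case structure of the proof of Theorem~\ref{thm:unfolding}.

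For the construction of $\widehat f$ I would single out one pendant edge and fill the remaining length of $\widehat e$ with a constant, which is legitimate since $\widehat v$ carries no condition. If $\lambda_1(H)\ge0$, reorder the edges so that $M:=\max_{e_1\cup\dots\cup e_r}f$ is attained on $e_1$, at a point $p\in[0,L(e_1)]$, and set $\widehat f:=\max(f_1,a)$ on $[0,p]$ and $\widehat f:=M$ on $[p,\ell]$. Then $\widehat f$ is continuous with $\widehat f(0)=a$; truncation at the level $a$ only decreases the Dirichlet integral, so $\int_0^\ell|\widehat f'|^2\le\int_0^p|f_1'|^2\le\sum_j\int_{e_j}|f_j'|^2$; and since $0<f_j\le M$ on each $e_j$, $\int_0^\ell|\widehat f|^2\ge\int_0^p|f_1|^2+M^2(\ell-p)\ge\sum_j\int_{e_j}|f_j|^2$. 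If $\lambda_1(H)<0$, the eigenvalue equation, which on $e_j$ reads $f_j''=(q-\lambda_1(H))f_j$, together with $q|_{e_j}\ge0$ and $f_j>0$ makes each $f_j$ convex, and $f_j'(L(e_j))=-\alpha_{v_j}b_j\le0$ then forces $f_j$ to be non-increasing; hence $m:=\min_{e_1\cup\dots\cup e_r}f$ equals $\min_j b_j$ and is attained at a pendant vertex, say $v_1$. Put $\widehat f:=f_1$ on $[0,L(e_1)]$ and $\widehat f:=m$ on $[L(e_1),\ell]$; this is continuous with $\widehat f(0)=a$, satisfies $\int_0^\ell|\widehat f'|^2=\int_{e_1}|f_1'|^2\le\sum_j\int_{e_j}|f_j'|^2$, and $\int_0^\ell|\widehat f|^2=\int_{e_1}|f_1|^2+m^2(\ell-L(e_1))\le\sum_j\int_{e_j}|f_j|^2$ because $f_j\ge m$ on each $e_j$.

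For the strict inequality when $\lambda_1(H)\ne0$ (here $r\ge2$) I would chase the equality cases. If $\lambda_1(H)>0$, equality throughout would force $q|_{e_j}\equiv0$ and $\alpha_{v_j}=0$ for every $j$ and $f_j$ constant on some $e_j$ with $j\ge2$, and then $-f_j''=\lambda_1(H)f_j$ would yield $\lambda_1(H)=0$, a contradiction. If $\lambda_1(H)<0$, equality in the $L^2$-estimate above would force $f_j\equiv m$ on some $e_j$, whence $q|_{e_j}\equiv\lambda_1(H)<0$, contradicting $q|_{e_j}\ge0$. In either case $\lambda_1(\widetilde H)<\lambda_1(H)$.

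I expect the main obstacle to be exactly the joint requirement on $\widehat f$: it must be continuous on $\widehat e$, take the prescribed value $a$ at $v_0$, and control both the Dirichlet energy and the $L^2$-norm in the directions dictated by the sign of $\lambda_1(H)$, all simultaneously. Naively concatenating the functions $f_j$ along $\widehat e$ fails, since their pendant-vertex values $b_j$ need not agree and there is no continuous way to chain them; the way out — and the point where the hypotheses $q|_{e_j}\ge0$ and $\alpha_{v_j}\ge0$ really enter — is to modify $f$ only on a single, extremally chosen pendant edge and to exploit the unconstrained Neumann end $\widehat v$ to deposit a constant piece equal to $\max f$ (when $\lambda_1(H)\ge0$) or $\min f$ (when $\lambda_1(H)<0$) on the leftover length, which is what reconciles the two inequalities.
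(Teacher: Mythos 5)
Your proof is correct and follows essentially the same strategy as the paper's (sketched) argument: build a test function on $\widetilde\Gamma$ from the positive ground state by filling the leftover length of $\widehat e$ with a constant equal to the extremum of $f$ over the pendant edges, with the two cases governed by the sign of $\lambda_1(H)$. The only real deviation is in the case $\lambda_1(H)<0$, where you invoke convexity of the eigenfunction on the pendant edges to locate the minimum at a pendant vertex and append the constant at the Neumann end; the paper instead inserts the constant segment at the interior minimum point and keeps all of $f_1$, which avoids that extra (but sound) ODE argument.
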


\begin{proof}
The proof is very similar to the proof of Theorem \ref{thm:unfolding} and is only sketched here. We may assume here that $r = 2$, i.e.\ only two pendant edges are unfolded. The general case can be obtained through multiple applications of this case. Starting with a uniformly positive function $f \in \ker (H - \lambda_1 (H))$, we consider a test function $\widetilde f$ on $\widetilde \Gamma$ which is equal to $f$ on all edges except $\widehat e$, employing the natural identification of those edges with edges in the original graph $\Gamma$. On the new edge $\widehat e$, we define $\widetilde f$ depending on whether $\lambda_1 (H)$ is non-negative or negative. If $\lambda_1 (H) \geq 0$, choose $x_0 \in e_1 \cup e_2$ such that $f |_{e_1 \cup e_2}$ takes its maximum at $x_0$; without loss of generality we assume $x_0 \in e_1$. Then, on $\widehat e$, identified with the interval $[0, L (e_1) + L (e_2)]$, we set
\begin{align*}
 \widetilde f_{\widehat e} (x) = \begin{cases}
                                  f_1 (x), & x \in [0, x_0],\\
                                  f_1 (x_0), & x \in [x_0, x_0 + L (e_2)], \\
                                  f_1 (x - L (e_2)), & x \in [x_0 + L (e_2), L (e_1) + L (e_2)];
                                 \end{cases}
\end{align*}
in other words, we transplant the edge $e_2$ into $e_1$ at the position $x_0$ and replace the original eigenfunction on $e_2$ by a suitable constant. This construction yields a function $\widetilde f$ on $\widetilde \Gamma$ which is continuous inside each edge and at the joint vertex $v_0$ and belongs to the domain of the quadratic form $\widetilde{h}$ associated with $\widetilde H$. Now a computation analogous to the one in the proof of Theorem \ref{thm:unfolding} leads to
\begin{align*}
 \lambda_1 (\widetilde H) & \leq \frac{\widetilde h (\widetilde f)}{\int_{\widetilde \Gamma} |\widetilde f|^2} \leq \frac{h (f)}{\int_\Gamma |f|^2} = \lambda_1 (H).
\end{align*}
The case where $\lambda_1 (H) \leq 0$ can be treated analogously; the only difference is that $x_0$ is chosen such that $f |_{e_1 \cup e_2}$ takes its minimum -- instead of maximum -- there.

If $\lambda_1 (H) \neq 0$ then $f_2$ is not constant and thus the above computations even yield strict inequality for the Rayleigh quotients. This implies the strict eigenvalue inequality.
\end{proof}

The construction in the previous proof makes use of the fact that removing pendant edges (for transplantation to a different position) does neither change the vertex condition required in the domain of the quadratic form at the vertex $v_0$ nor the vertex term in the quadratic form. This is fundamentally different for, e.g., $\delta'$ or anti-Kirchhoff vertex conditions. In fact, the following example shows that for such conditions the statement of the previous theorem may fail.

\begin{example}\label{ex:unfoldingPendAK}
Consider a star graph $\Gamma$ consisting of two pendant edges with lengths $\ell_1, \ell_2$ attached to the same vertex $v_0$ and the Laplacian on $\Gamma$ with an anti-Kirchhoff vertex condition at $v_0$ and Neumann vertex conditions at the degree-one vertices. Then $\lambda_1 (H) = 0$ and the eigenspace is spanned by the function which is constantly $1$ on one edge and $- 1$ on the other. However, unfolding the pendant edges and keeping the anti-Kirchhoff condition at $v_0$ results, effectively, in an interval with a Dirichlet boundary condition at one end and a Neumann boundary condition at the other end, see Figure \ref{fig:unfolding_pendant_example}. 
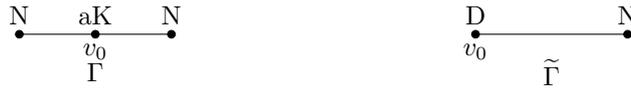
\begin{figure}[htb]
  \centering
  \begin{tikzpicture}
      \draw[fill] (-1,0) circle(0.05) node[above]{N};
      \draw[fill] (0,0) circle(0.05) node[below]{$v_0$} node[above]{aK};
      \draw[fill] (1,0) circle(0.05) node[above]{N};
      \draw (-1,0)--(0,0)--(1,0);
      \draw (0,-0.5) node[]{$\Gamma$};
      
    \begin{scope}[shift={(5,0)}]
      \draw[fill] (0,0) circle(0.05) node[below]{$v_0$}  node[above]{D};
      \draw[fill] (2,0) circle(0.05)  node[above]{N};
      \draw (0,0)--(2,0);
      \draw (1,-0.5) node[]{$\widetilde\Gamma$};
    \end{scope}
  \end{tikzpicture}
  \caption{Unfolding pendant edges in Example \ref{ex:unfoldingPendAK}.}
  \label{fig:unfolding_pendant_example}
\end{figure}
However, the Laplacian $\widetilde H$ subject to these conditions satisfies
\begin{align*}
 \lambda_1 (\widetilde H) = \frac{\pi^2}{4 (\ell_1 + \ell_2)^2} > 0 = \lambda_1 (H).
\end{align*}

The exact same construction works if the anti-Kirchhoff condition is replaced by a $\delta'$ condition with positive strength. In this case the same spectral effect occurs.
\end{example}

\begin{openproblem}
In Example \ref{ex:unfoldingPendAK} we have seen a situation where unfolding pendant edges for $\delta'$ vertex conditions increases the first eigenvalue. Can one show a general statement of this type, possibly depending on the signs of the strengths in the $\delta'$ couplings?
\end{openproblem}

\blue{

\section{Hadamard-type formulas}
\label{sec:Hadamard}

In this section we study Haramard-type formulas, i.e.\ the variation of eigenvalues and eigenvectors with respect to perturbations of the vertex conditions.

\begin{remark}
\label{rem:Frechet_derivative}
In general, the vertex conditions are parametrized by certain matrices, i.e.\ by elements of a Banach space, and differentiating has to be understood in the sense of, e.g., the Fr\'echet derivative: let $X$ be a Banach space, $U\subseteq X$ open, and $F\from U\to \C$ Fr\'echet differentiable; cf.\ \cite[Section~2.1]{Zeidler}. Then $F'$ is a mapping from $U$ to $\cL(X, \C)$, where $\cL(X,\C)$ denotes the space of bounded linear functionals on $X$. Thus, for $x\in U$, $F'(x)\from X\to \C$ is linear and bounded. In our case, $X$ will be finite-dimensional, $X = \C^{d\times d}$ for some $d$, and then for $x\in U$ we can represent $F'(x)$ by its matrix of partial derivatives.
\end{remark}

\begin{theorem}
\label{thm:Hadamard}
    Let $\Gamma$ be a compact metric graph, $q\in L^\infty(\Gamma)$ real, $v_0\in \cV$ and $H$ the Schr\"odinger operator on $\Gamma$ with potential $q$, arbitrary self-adjoint coupling conditions as in Proposition \ref{prop:SAconditions} at the vertices $v\in \cV\setminus\{v_0\}$ and a self-adjoint coupling condition with non-trivial Robin part, $P_{v_0, \rm R} \neq 0$, at the vertex $v_0$. Denote by $\Lambda_{v_0}$ the self-adjoint, invertible coupling operator in $\ran P_{v_0, \rm R}$.
    Let $\lambda$ be a simple eigenvalue of $H$ and $f$ be a corresponding normalized eigenfunction.
    Then $\lambda$ and $f$ are differentiable w.r.t.\ $\Lambda_{v_0}$ and we have
    \[\lambda'(\Lambda_{v_0})\from \widetilde{\Lambda_{v_0}} \mapsto \big\langle \widetilde{\Lambda_{v_0}} P_{v_0, \rm R} F (v_0), P_{v_0, \rm R} F (v_0) \big\rangle\]
    that is, 
    \[\lambda'(\Lambda_{v_0})(\widetilde{\Lambda_{v_0}}) = \big\langle \widetilde{\Lambda_{v_0}} P_{v_0, \rm R} F (v_0), P_{v_0, \rm R} F (v_0) \big\rangle\]
    for all operators $\widetilde{\Lambda_{v_0}}$ in $\ran P_{v_0,\rm R}$.
\end{theorem}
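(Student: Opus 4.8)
The plan is to realize $H = H(\Lambda_{v_0})$ as one member of an analytic family of operators obtained by varying only the coupling operator at $v_0$, to deduce real-analytic (hence Fr\'echet-differentiable) dependence of the simple eigenvalue $\lambda$ and of a normalized eigenfunction $f$ from Kato's analytic perturbation theory, and then to identify the derivative by a Hellmann--Feynman-type computation carried out at the level of the quadratic form.

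For a self-adjoint invertible operator $\Lambda$ in $\ran P_{v_0,\rm R}$ let $H(\Lambda)$ be the operator obtained from $H$ by replacing $\Lambda_{v_0}$ with $\Lambda$, and let $h_\Lambda$ be its form as in Proposition~\ref{prop:SAconditions}. The crucial observation is that $\dom h_\Lambda = \dom h$ is \emph{independent} of $\Lambda$ (it only involves the Dirichlet projections) and, writing $F_u(v_0)$ for the boundary vector of $u$ at $v_0$ in the notation of Section~\ref{sec:metric-graphs},
\[
 h_\Lambda(u) = h_{\Lambda_{v_0}}(u) + \big\langle (\Lambda - \Lambda_{v_0}) P_{v_0,\rm R} F_u(v_0), P_{v_0,\rm R} F_u(v_0)\big\rangle, \qquad u\in\dom h,
\]
so $\Lambda\mapsto h_\Lambda$ is affine, and the perturbing form is bounded relative to $h_{\Lambda_{v_0}}$ because $u\mapsto F_u(v_0)$ is continuous from $\widetilde H^1(\Gamma)$ to $\C^{\deg(v_0)}$ and the form norm of $h_{\Lambda_{v_0}}$ is equivalent to the $\widetilde H^1$-norm on $\dom h$. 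Hence $(h_\Lambda)_\Lambda$ is a holomorphic family of forms of type (a) and $(H(\Lambda))_\Lambda$ is holomorphic of type (B) in the sense of \cite[Chapter VII]{Kato}. Since the spectrum of $H$ is purely discrete, $\lambda$ is an isolated eigenvalue, and it is simple by hypothesis; therefore on a neighbourhood of $\Lambda_{v_0}$ there is a simple eigenvalue $\lambda(\Lambda)$ with a normalized eigenfunction $f_\Lambda$ of $H(\Lambda)$, both depending real-analytically on $\Lambda$, with $\lambda(\Lambda_{v_0}) = \lambda$, $f_{\Lambda_{v_0}} = f$ after fixing a phase, and $\Lambda\mapsto f_\Lambda$ analytic as a map into $\dom h$ with its form norm. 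In particular $\lambda$ and $f$ are Fr\'echet differentiable in $\Lambda_{v_0}$ in the sense of Remark~\ref{rem:Frechet_derivative}, which is the differentiability assertion of the theorem.

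To compute the derivative, fix a self-adjoint operator $\widetilde\Lambda$ in $\ran P_{v_0,\rm R}$, put $\Lambda(s) := \Lambda_{v_0} + s\widetilde\Lambda$ for small real $s$, write $\lambda(s) := \lambda(\Lambda(s))$ and $f(s) := f_{\Lambda(s)}$, and denote $s$-derivatives at $s = 0$ by a dot. From the weak eigenvalue equation $h_{\Lambda(s)}(f(s), g) = \lambda(s)\,(f(s), g)_{L^2(\Gamma)}$, valid for all $g \in \dom h$, together with the affine dependence of $h_{\Lambda(s)}$ on $s$, differentiation at $s = 0$ gives, for all $g \in \dom h$,
\[
 h_{\Lambda_{v_0}}(\dot f, g) + \big\langle \widetilde\Lambda P_{v_0,\rm R} F (v_0), P_{v_0,\rm R} G (v_0)\big\rangle = \dot\lambda\,(f, g)_{L^2(\Gamma)} + \lambda\,(\dot f, g)_{L^2(\Gamma)},
\]
where $F(v_0), G(v_0)$ are the boundary vectors of $f, g$ at $v_0$. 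Choosing $g = f$ and using that $f \in \dom H$ with $Hf = \lambda f$ and $\lambda \in \R$, the symmetry of the form yields $h_{\Lambda_{v_0}}(\dot f, f) = \lambda\,(\dot f, f)_{L^2(\Gamma)}$, which cancels the last term; since $\|f\|_{L^2(\Gamma)} = 1$, what remains is $\dot\lambda = \langle \widetilde\Lambda P_{v_0,\rm R} F(v_0), P_{v_0,\rm R} F(v_0)\rangle$, i.e.\ $\lambda'(\Lambda_{v_0})(\widetilde\Lambda)$ equals the claimed expression for every self-adjoint $\widetilde\Lambda$. The identity then extends to an arbitrary operator $\widetilde\Lambda$ in $\ran P_{v_0,\rm R}$ by linearity, both sides being real-linear in $\widetilde\Lambda$ and every operator being a real-linear combination of the self-adjoint operators $\tfrac12(\widetilde\Lambda + \widetilde\Lambda^{*})$ and $\tfrac1{2i}(\widetilde\Lambda - \widetilde\Lambda^{*})$.

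The step I expect to require the most care is establishing the analytic, hence differentiable, dependence of $\lambda$ and $f$ on $\Lambda_{v_0}$: the \emph{operator} domain $\dom H(\Lambda)$ genuinely moves with $\Lambda$ (the Robin constraint $P_{v_0,\rm R} F'(v_0) = \Lambda P_{v_0,\rm R} F(v_0)$ changes), so the perturbation is not well-behaved at the operator level, which is precisely why the argument is run through the forms $h_\Lambda$ with their common domain. Verifying the hypotheses of the type-(B) perturbation theory of \cite[Chapter VII]{Kato} — essentially the relative form-boundedness of the perturbation, which reduces to continuity of the trace map $\widetilde H^1(\Gamma) \ni u \mapsto F_u(v_0)$ — is the technical heart; once this is in place, the remaining computation is the routine first-order perturbation formula. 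As a consistency check, the sign of the derivative recovers the monotonicity in Theorem~\ref{thm:strength_general}: a non-positive perturbation direction $\widetilde\Lambda \leq 0$ yields $\dot\lambda \leq 0$.
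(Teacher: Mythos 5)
Your proof is correct, and the core of it --- differentiating the weak eigenvalue identity along the line $\Lambda_{v_0}+s\widetilde\Lambda$, testing against $f$ itself, and using $Hf=\lambda f$ together with the symmetry of the form to cancel the terms involving $\dot f$ --- is the same Hellmann--Feynman computation as in the paper, which differentiates $h(f)=\lambda$ entrywise and kills the term $2\Real h(f,\partial_{jk}f)$ in exactly this way. Where you genuinely differ is in how differentiability is established: the paper simply cites \cite[Theorems 3.8 and 3.10]{BK12} for analytic dependence of the simple eigenvalue and eigenfunction on the vertex conditions, and then separately verifies by an ODE argument (the eigenfunction is on each edge a combination of two basis solutions whose coefficients carry all the parameter dependence) that $\partial_{jk}f\in\dom h$; you instead give a self-contained justification via Kato's holomorphic families of type (B), exploiting that $\dom h_\Lambda$ is independent of $\Lambda$, that $\Lambda\mapsto h_\Lambda$ is affine, and that the perturbing vertex form is relatively form-bounded through the continuity of the trace $u\mapsto F_u(v_0)$ on $\widetilde H^1(\Gamma)$ --- this also hands you $\dot f\in\dom h$ for free. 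Your route is more self-contained and makes explicit why the argument must be run at the form level (the operator domain moves with $\Lambda$); the paper's is shorter at the cost of an external reference. Two minor remarks: strictly speaking Kato's type-(B) theory is stated for one complex parameter, so to get Fr\'echet differentiability in the matrix parameter you should note that the directional derivatives you compute are jointly continuous in the base point and linear in the direction (your explicit formula shows this), which upgrades G\^ateaux to Fr\'echet; and your closing extension of the formula from self-adjoint $\widetilde\Lambda$ to arbitrary $\widetilde\Lambda$ by linearity is a definitional convention rather than a derivative computation (for non-Hermitian directions $\Lambda_{v_0}+s\widetilde\Lambda$ leaves the admissible parameter set), but the paper is no more precise on this point, as its own Remark~\ref{rem:Frechet_derivative} and the entrywise partial derivatives $\partial_{jk}$ show.
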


\begin{remark}
We would like to point out that $\ran P_{v_0, \rm R}$ is a finite-dimensional vector space, implying that $\Lambda_{v_0}$ can be represented as a Hermitian matrix. Hence, differentiating with respect to $\Lambda_{v_0}$ in fact is just differentiating on a finite-dimensional space.
\end{remark}

\begin{proof}[Proof of Theorem \ref{thm:Hadamard}]
    By \cite[Theorem 3.8 and Theorem 3.10]{BK12} and the different equivalent ways to describe vertex conditions (see e.g.\ \cite[Theorem 5]{kuc08}) the eigenvalues and eigenfunctions are differentiable with respect to $\Lambda_{v_0}$. Let $d:=\dim \ran P_{v_0,\rm R}>0$ and $j,k\in \{1,\ldots, d\}$. We abbreviate $\partial_{jk}:=\frac{\partial}{\partial (\Lambda_{v_0})_{j,k}}$ for the partial derivatives.
    
    We check that partial derivatives of $f$ w.r.t.\ components of the parameter again belong to $\dom h$. It suffices to check that the derivative is in $\widetilde{H}^1(\Gamma)$, as the condition $P_{v,\rm D}F(v) = 0$ for all $v\in \cV$ is a closed condition and thus stable under taking partial derivatives w.r.t.\ parameters.    
    Note that on each edge $e\in \cE$, $f$ is a linear combination of two basis functions solving the eigenvalue equation $-f'' + qf = \lambda f$; the dependence of $f$ on $\Lambda_{v_0}$ is only present in the coefficients of these basis functions. Thus, on each edge, $\partial_{jk} f$ is again in $H^1$ on each edge (as edges have finite length). Thus, $\partial_{jk} f\in \widetilde{H}^1(\Gamma)$, and hence $\partial_{jk} f\in \dom h$.
    
    Since $f$ is normalized, we obtain
    \[2\Real \langle f, \partial_{jk} f\rangle = 0, \quad j, k = 1, \dots, d.\]
    Moreover, as $h(f) = \lambda$, we get
    \[\partial_{jk} \lambda (\Lambda_{v_0})= \bigl(P_{v_0, \rm R} F (v_0)\bigr)_k \overline{\bigl(P_{v_0, \rm R} F (v_0)\bigr)_j} + 2\Real h(f,\partial_{jk}f).\]
    Now, since $f\in \dom H$ with $Hf=\lambda f$ and $\partial_{jk}f\in \dom h$, we observe
    \[\Real h(f,\partial_{jk}f) = \Real \langle Hf, \partial_{jk} f\rangle = \lambda \Real \langle f, \partial_{jk} f\rangle = 0,\]
    and therefore
    \[\partial_{jk} \lambda (\Lambda_{v_0})= \bigl(P_{v_0, \rm R} F (v_0)\bigr)_k \overline{\bigl(P_{v_0, \rm R} F (v_0)\bigr)_j}.\]
    We note that this partial derivative is actually constant.
    
    With respect to the canonical basis $\bigl((\delta_{jk,lm})_{l,m\in\{1,\ldots,d\}}: j,k\in \{1,\ldots,d\}\bigr)$ of $\C^{d\times d}$, we observe
    \begin{align*}
        \lambda'(\Lambda_{v_0}) ((\delta_{jk,lm})) & = \partial_{jk} \lambda(\Lambda_{v_0}) = \bigl(P_{v_0, \rm R} F (v_0)\bigr)_k \overline{\bigl(P_{v_0, \rm R} F (v_0)\bigr)_j} \\
        & = \langle ((\delta_{jk,lm}) P_{v_0, \rm R} F (v_0), P_{v_0, \rm R} F (v_0) \big\rangle.
    \end{align*}
    This yields the assertion.
\end{proof}

We will now specialize to $\delta$ and $\delta'$ coupling conditions; we will formulate this as two corollaries.

\begin{corollary}[{\cite[Proposition 4.2]{BK12}}]
\label{cor:Hadamard_delta}
Let $\Gamma$ be a compact metric graph, $q\in L^\infty(\Gamma)$ real, $v_0\in \cV$ and $H$ the Schr\"odinger operator on $\Gamma$ with potential $q$, arbitrary self-adjoint coupling conditions as in Proposition \ref{prop:SAconditions} at the vertices $v\in \cV\setminus\{v_0\}$ and a $\delta$ coupling condition of strength $\alpha$ at the vertex $v_0$.
    Let $\lambda$ be a simple eigenvalue and $f$ be a corresponding normalized eigenfunction.
    Then $\lambda$ and $f$ are differentiable w.r.t.\ $\alpha$ and we have
    \[\frac{\mathrm{d} \lambda}{\mathrm{d} \alpha}(\alpha) = |f(v_0)|^2.\]
\end{corollary}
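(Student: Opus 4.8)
The plan is to deduce this directly from Theorem~\ref{thm:Hadamard} by computing what the general formula becomes in the special case of a $\delta$ coupling. Recall from Definition~\ref{def:Cond}(b) that a $\delta$ coupling of strength $\alpha$ at $v_0$ corresponds to $P_{v_0,\rm R}=\cP_d$ (with $d=\deg(v_0)$) and $\Lambda_{v_0}$ equal to multiplication by $\tfrac{\alpha}{d}$ on $\ran P_{v_0,\rm R}$. Since $\ran\cP_d=\spann\{(1,\dots,1)^\top\}$ is one-dimensional, the parameter space here is effectively $\R$: scaling $\alpha$ is the same as scaling the single matrix entry of $\Lambda_{v_0}$ (on the one-dimensional invariant subspace), so the abstract Fr\'echet derivative of Theorem~\ref{thm:Hadamard} reduces to an ordinary derivative $\tfrac{\d\lambda}{\d\alpha}$.

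The key steps, in order, are: (i) identify the perturbation direction: increasing $\alpha$ by $1$ changes $\Lambda_{v_0}$ by $\widetilde{\Lambda_{v_0}}=\tfrac{1}{d}\,\Id$ on $\ran P_{v_0,\rm R}$, so by the chain rule $\tfrac{\d\lambda}{\d\alpha}(\alpha)=\lambda'(\Lambda_{v_0})\big(\tfrac{1}{d}\Id\big)$; (ii) plug this into the formula from Theorem~\ref{thm:Hadamard}:
\[
\frac{\d\lambda}{\d\alpha}(\alpha)=\Big\langle \tfrac1d\, P_{v_0,\rm R} F(v_0),\, P_{v_0,\rm R} F(v_0)\Big\rangle=\frac1d\,\big|P_{v_0,\rm R}F(v_0)\big|^2;
\]
(iii) evaluate the right-hand side using Lemma~\ref{lem:forms}(ii) or, more directly, the explicit description of $\delta$ conditions: a function in $\dom h$ satisfying a $\delta$ condition at $v_0$ is continuous at $v_0$, so $F(v_0)$ is the constant vector with all entries equal to $f(v_0)$, whence $P_{v_0,\rm R}F(v_0)=\cP_d F(v_0)=F(v_0)$ and $|P_{v_0,\rm R}F(v_0)|^2=d\,|f(v_0)|^2$. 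Substituting this into step (ii) gives $\tfrac{\d\lambda}{\d\alpha}(\alpha)=|f(v_0)|^2$, as claimed.

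For rigor one should also note that the normalized eigenfunction $f$ depends differentiably on $\alpha$ (this is exactly the statement that $f$ is differentiable w.r.t.\ $\Lambda_{v_0}$ in Theorem~\ref{thm:Hadamard}, specialized via the one-dimensional reparametrization $\alpha\mapsto\Lambda_{v_0}$), and that in the formula $|f(v_0)|^2$ the quantity $f(v_0)$ is the common boundary value of $f$ at $v_0$, which again is well-defined because $\dom h$ enforces continuity at $v_0$ for a $\delta$ coupling.

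I do not expect a genuine obstacle here: once Theorem~\ref{thm:Hadamard} is available, the corollary is essentially bookkeeping with the projection $\cP_d$. The only point requiring a modicum of care is the reparametrization in step (i)—making sure the factor $\tfrac1d$ from $\Lambda_{v_0}=\tfrac{\alpha}{d}\cdot(\text{identity on the line})$ and the factor $d$ from $|P_{v_0,\rm R}F(v_0)|^2=d|f(v_0)|^2$ cancel correctly—but this is a one-line check. I would therefore present the proof as a short computation invoking Theorem~\ref{thm:Hadamard}, the identification of $\Lambda_{v_0}$ with $\tfrac{\alpha}{d}$ on $\ran P_{v_0,\rm R}$, and the continuity of $\dom h$-functions at $v_0$.
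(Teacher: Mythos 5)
Your proposal is correct and follows exactly the route the paper takes: the paper's own proof is the one-line remark that the corollary ``follows from Theorem \ref{thm:Hadamard} and the fact that for the $\delta$ coupling condition $\Lambda_{v_0}$ is multiplication by $\frac{\alpha}{\deg(v_0)}$,'' and your computation (the cancellation of the factor $\tfrac{1}{d}$ from the perturbation direction against the factor $d$ from $|P_{v_0,\rm R}F(v_0)|^2 = d\,|f(v_0)|^2$) is precisely the bookkeeping the authors leave implicit.
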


\begin{proof}
    The proof follows from Theorem \ref{thm:Hadamard} and the fact that for the $\delta$ coupling condition we have that $\Lambda_{v_0}$ is multiplication by $\frac{\alpha}{\deg(v_0)}$.
\end{proof}

\begin{corollary}
\label{cor:Hadamard_delta'}
Let $\Gamma$ be a compact metric graph, $q\in L^\infty(\Gamma)$ real, $v_0\in \cV$ and $H$ the Schr\"odinger operator on $\Gamma$ with potential $q$, arbitrary self-adjoint coupling conditions as in Proposition \ref{prop:SAconditions} at the vertices $v\in \cV\setminus\{v_0\}$ and a $\delta'$ coupling condition of strength $\beta$ at the vertex $v_0$.
    Let $\lambda$ be a simple eigenvalue and $f$ be a corresponding normalized eigenfunction.
    Then $\lambda$ and $f$ are differentiable w.r.t.\ $\frac{1}{\beta}$ and w.r.t.\ $\beta$, and we have
    \[\frac{\mathrm{d} \lambda}{\mathrm{d} \frac{1}{\beta}}(\beta) = \bigg| \sum_{j=1}^{\deg(v_0)} F_j(v_0)\bigg| ^2 \qquad \text{and}\qquad \frac{\mathrm{d} \lambda}{\mathrm{d} \beta}(\beta) = -\frac{1}{\beta^2} \bigg| \sum_{j=1}^{\deg(v_0)} F_j(v_0)\bigg| ^2.\]
\end{corollary}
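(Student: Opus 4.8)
The plan is to derive both identities from Theorem~\ref{thm:Hadamard} via the chain rule, in complete parallel to the proof of Corollary~\ref{cor:Hadamard_delta}. First I would note that for a $\delta'$ coupling at $v_0$ one has $P_{v_0,\rm R} = \cP_d$ with $d := \deg(v_0)$, so the Robin part $\ran P_{v_0,\rm R} = \spann\{(1,\dots,1)^\top\}$ is one-dimensional and $\Lambda_{v_0}$ is multiplication by the real scalar $\frac{d}{\beta}$ on this line. Since $\beta \in \R\setminus\{0\}$, this $\Lambda_{v_0}$ is indeed invertible, so Theorem~\ref{thm:Hadamard} applies; moreover changing $\beta$ (or $\frac{1}{\beta}$) leaves the three projections $P_{v_0,\rm D}, P_{v_0,\rm N}, P_{v_0,\rm R}$ unchanged, so this defines a genuine differentiable one-parameter curve inside the family considered there, and differentiability of $\lambda$ and $f$ with respect to $\beta$ and $\frac{1}{\beta}$ then follows from the corresponding assertion in Theorem~\ref{thm:Hadamard} together with the chain rule.

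Next I would evaluate the quadratic expression in Theorem~\ref{thm:Hadamard}. One has $P_{v_0,\rm R} F(v_0) = \cP_d F(v_0) = \frac{1}{d}\big(\sum_{j=1}^{d} F_j(v_0)\big)(1,\dots,1)^\top$, hence $\|\cP_d F(v_0)\|^2 = \frac{1}{d}\big|\sum_{j=1}^{d} F_j(v_0)\big|^2$, and therefore, for $\widetilde\Lambda_{v_0}$ equal to multiplication by a scalar $s$ on $\ran P_{v_0,\rm R}$,
\[\big\langle \widetilde\Lambda_{v_0} P_{v_0,\rm R} F(v_0),\, P_{v_0,\rm R} F(v_0)\big\rangle = \frac{s}{d}\bigg|\sum_{j=1}^{d} F_j(v_0)\bigg|^2.\]
To get the derivative with respect to $\frac{1}{\beta}$ I would write $\Lambda_{v_0} = d\cdot\frac{1}{\beta}$, so the derivative of the curve $\frac{1}{\beta} \mapsto \Lambda_{v_0}$ is multiplication by $d$; inserting $s = d$ into the display and applying Theorem~\ref{thm:Hadamard} gives $\frac{\mathrm{d}\lambda}{\mathrm{d}(1/\beta)} = \big|\sum_j F_j(v_0)\big|^2$. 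For the derivative with respect to $\beta$ I would instead note that the derivative of $\beta \mapsto \Lambda_{v_0} = \frac{d}{\beta}$ is multiplication by $-\frac{d}{\beta^2}$, so inserting $s = -\frac{d}{\beta^2}$ yields $\frac{\mathrm{d}\lambda}{\mathrm{d}\beta} = -\frac{1}{\beta^2}\big|\sum_j F_j(v_0)\big|^2$; equivalently this equals $-\frac{1}{\beta^2}$ times the previous derivative, as it must by $\frac{\mathrm{d}}{\mathrm{d}\beta}\big(\frac{1}{\beta}\big) = -\frac{1}{\beta^2}$.

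There is no substantial obstacle beyond this bookkeeping, since the analytic work is already contained in Theorem~\ref{thm:Hadamard}. The two points deserving a word of care are that Theorem~\ref{thm:Hadamard} genuinely applies --- which is exactly the standing requirement $\beta\neq 0$ for $\delta'$ couplings, ensuring invertibility of $\Lambda_{v_0}$ --- and that the reparametrization $\beta \leftrightarrow \frac{1}{\beta}$ is a diffeomorphism of $\R\setminus\{0\}$ onto itself, so that differentiability and the chain rule transfer freely between the two parametrizations.
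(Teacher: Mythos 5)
Your proposal is correct and follows essentially the same route as the paper's own (very brief) proof: apply Theorem \ref{thm:Hadamard} with $\Lambda_{v_0}$ equal to multiplication by $\frac{\deg(v_0)}{\beta}$ on the one-dimensional Robin range, and obtain the $\beta$-derivative by the chain rule. You merely spell out the computation $\|\cP_d F(v_0)\|^2 = \frac{1}{d}\bigl|\sum_{j=1}^{d}F_j(v_0)\bigr|^2$ that the paper leaves implicit, and that bookkeeping is accurate.
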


\begin{proof}
    The proof of the first equality follows from Theorem \ref{thm:Hadamard} and the fact that for the $\delta'$ coupling condition we have that $\Lambda_{v_0}$ is multiplication by $\frac{\deg{v_0}}{\beta}$. The derivative of $\lambda$ w.r.t.\ $\beta$ can then be obtained by the chain rule.
\end{proof}
}

\section{Bounds for the lowest eigenvalue}
\label{sec:bounds}

In this final section we review some known bounds on the ground state eigenvalue of a Schrödinger operator on a metric graph, for special choices of vertex conditions. We consciously exclude the case of continuity-Kirchhoff vertex conditions, where $\lambda_1 (H) = 0$, but would like to mention that a large body of literature dealing with estimates for the lowest positive eigenvalue (also called spectral gap) as well as its higher eigenvalues exists. We refer the reader to \cite{BL17,BKKM19,BCJ21,BHY23,F05,K20,KKMM16,K13,K15,KMN13,N87,P21,R17,R22} and the references therein.

For $\delta$ coupling conditions, one has the following lower bound, where the total positive, respectively negative interaction strengths of $H$ are defined as
\begin{align}\label{eq:totalStrengths}
 I_+ := \int_\Gamma q_+ + \sum_{v : \alpha_v > 0} \alpha_v \quad \text{and} \quad I_- := \int_\Gamma q_- - \sum_{v : \alpha_v < 0} \alpha_v,
\end{align}
denoting by $q_+, q_- \geq 0$ the positive and negative parts of the potential $q$.

\begin{theorem}[{\cite[Theorem 1]{KKT16}}]
Let $\Gamma$ be a compact, connected metric graph with total length $L$ and let $H$ be the Schrödinger operator on $\Gamma$ with real-valued potential $q \in L^{\infty} (\Gamma)$ and a $\delta$ coupling condition of strength $\alpha_v$ at each vertex $v \in \cV$. Moreover, let the total positive and negative interaction strengths of $H$ be given in \eqref{eq:totalStrengths}. Then the first eigenvalue $\lambda_1 (H)$ satisfies
\begin{align*}
 \lambda_1 (H) \geq \lambda_1 (\widehat H),
\end{align*}
where $\widehat H$ is the Schrödinger operator with potential zero (i.e.\ the Laplacian) on the interval of length $L$ with a $\delta$ vertex condition (i.e.\ Robin boundary condition) of strength $I_+$ at one end point and a $\delta$ vertex condition of strength $  I_-$ at the other end point. In other words, $\lambda_1 (H)$ is bounded from below by $k^2$, where $k$ the smallest solution to the secular equation
\begin{align*}
 \left( k + \frac{I_- I_+}{k} \right) \tan (k L) = I_+ - I_-;
\end{align*}
cf.\ Figure \ref{fig:interval}.
\end{theorem}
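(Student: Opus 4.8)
The plan is to absorb the potential and all the $\delta$-strengths into two endpoint interactions and then to reduce $\Gamma$ to the interval $[0,L]$ by a monotone rearrangement of the ground state, in the spirit of P\'olya--Szeg\H{o}.

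First I would invoke Theorem~\ref{thm:courant}: since $\Gamma$ is connected and a $\delta$ condition is imposed at every vertex, $\lambda_1(H)$ is simple and its eigenfunction $f$ may be chosen strictly positive on $\Gamma$. Put $m:=\min_\Gamma f>0$ and $M:=\max_\Gamma f$, and let $f^{\ast}$ be the nondecreasing rearrangement of $f$ onto $[0,L]$; by equimeasurability $\int_0^L|f^{\ast}|^2=\int_\Gamma|f|^2$ and $f^{\ast}(0)=m$, $f^{\ast}(L)=M$. The decisive analytic input is the graph P\'olya--Szeg\H{o} inequality $f^{\ast}\in H^1(0,L)$ with $\int_0^L|(f^{\ast})'|^2\le\int_\Gamma|f'|^2$, which I would derive from the coarea formula: with distribution function $\mu(t):=|\{x\in\Gamma:f(x)>t\}|$ and level count $N(t):=\#\{x:f(x)=t\}$, for a.e.\ $t$ one has $-\mu'(t)=\sum_{f(x)=t}|f'(x)|^{-1}$ and $\int_\Gamma|f'|^2=\int_m^M\big(\sum_{f(x)=t}|f'(x)|\big)\,dt$; Cauchy--Schwarz gives $\sum_{f(x)=t}|f'(x)|\ge N(t)^2/(-\mu'(t))$, and connectedness forces $N(t)\ge1$ for $t\in(m,M)$ since a nonempty proper superlevel set of a connected space has nonempty boundary. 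For the monotone function $f^{\ast}$ these relations hold with equality and $N\equiv1$, so $\int_\Gamma|f'|^2\ge\int_m^M(-\mu'(t))^{-1}\,dt=\int_0^L|(f^{\ast})'|^2$.

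Second, estimating $m^2\le|f|^2\le M^2$ pointwise on $\Gamma$ and at the vertices, splitting the potential and the strengths into positive and negative parts, and using the notation of~\eqref{eq:totalStrengths}, one obtains
\begin{align*}
\int_\Gamma q|f|^2+\sum_{v\in\cV}\alpha_v|f(v)|^2\ \ge\ I_+\,m^2-I_-\,M^2\ =\ I_+|f^{\ast}(0)|^2-I_-|f^{\ast}(L)|^2 .
\end{align*}
Since the two $L^2$-norms coincide, combining this with the P\'olya--Szeg\H{o} inequality and denoting by $h$ and $h_{\widehat H}$ the forms of $H$ and $\widehat H$ (the latter being $u\mapsto\int_0^L|u'|^2+I_+|u(0)|^2-I_-|u(L)|^2$ on $H^1(0,L)$, with the sign at the right endpoint that matches the secular equation in the statement) gives
\begin{align*}
\lambda_1(H)=\frac{h(f)}{\int_\Gamma|f|^2}\ \ge\ \frac{\int_0^L|(f^{\ast})'|^2+I_+|f^{\ast}(0)|^2-I_-|f^{\ast}(L)|^2}{\int_0^L|f^{\ast}|^2}=\frac{h_{\widehat H}(f^{\ast})}{\|f^{\ast}\|_{L^2}^2}\ \ge\ \lambda_1(\widehat H),
\end{align*}
the last inequality being the min-max principle on $[0,L]$.

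Finally I would identify $\lambda_1(\widehat H)$ with $k^2$ where $k$ is the smallest positive root of the secular equation: the boundary conditions attached to $h_{\widehat H}$ are $u'(0)=I_+u(0)$ and $u'(L)=I_-u(L)$, so for $\lambda=k^2>0$ the ground state is $u(x)=\cos kx+\tfrac{I_+}{k}\sin kx$, and the condition at $L$ becomes $(k+I_+I_-/k)\tan kL=I_+-I_-$; the smallest positive root is picked out by positivity of the ground state, while $\lambda_1(\widehat H)\le0$ is handled by the same computation with hyperbolic or affine functions. I expect the only genuinely delicate point to be making the coarea/rearrangement step rigorous (regular values, absolute continuity of $f$ along each edge, the identity for $\mu'$); the zero-order estimate and the interval computation are routine.
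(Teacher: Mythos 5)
The paper states this theorem without proof, citing \cite{KKT16}, and your argument is a correct reconstruction of exactly the symmetrization proof used there: simplicity and positivity of the ground state via Theorem~\ref{thm:courant}, monotone rearrangement onto $[0,L]$ with the metric-graph P\'olya--Szeg\H{o} inequality justified by $N(t)\ge 1$ from connectedness, the endpoint estimate $I_+m^2-I_-M^2$ matching $f^{\ast}(0)=m$, $f^{\ast}(L)=M$, and the min--max principle on the interval. The one technical point you flag (regular values and absolute continuity in the coarea step) is indeed the only place requiring care, and it is standard for eigenfunctions, which are $C^1$ on each of the finitely many edges.
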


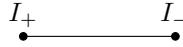
\begin{figure}[htb]
  \centering
  \begin{tikzpicture}
    \draw[fill] (0,0) circle(0.05) node[above]{$I_+$};
    \draw[fill] (2,0) circle(0.05) node[above]{$I_-$};
    \draw (0,0)--(2,0);
  \end{tikzpicture}
  \caption{The graph and $\delta$ strengths for $\widehat H$.}
  \label{fig:interval}
\end{figure}

The latter theorem may be viewed as an isoperimetric result: among the Schrö\-dinger operators with $\delta$ coupling conditions on all graphs of fixed length and fixed total positive and negative interaction strengths, the first eigenvalue gets minimal on an interval without edge potential.

The following example indicates that an upper bound for $\lambda_1 (H)$, in the case of $\delta$ vertex conditions, depending only on the strengths of the $\delta$ couplings and the total length of the graph might exist. For the first non-trivial eigenvalue of the Laplacian with continuity-Kirchhoff
vertex conditions, such a bound cannot exist, as simple counterexamples show; see, e.g., \cite{KKMM16}. However, the lowest eigenvalue is always zero in this case, hence bounded.

\begin{example}
Consider a star graph of total length $L = 1$ with $E$ edges of length $l = 1/E$, see Figure \ref{fig:n-star}. 
\begin{figure}[htb]
  \centering
  \begin{tikzpicture}
    \draw[fill] (0,0) circle(0.05);
    \foreach \ph in {0,45,90,135,180,225,270,315}{
        \draw[fill] ({2*cos(\ph)},{2*sin(\ph)}) circle(0.05);
        \draw (0,0)--({2*cos(\ph)},{2*sin(\ph)}) ;
    }
  \end{tikzpicture}
  \caption{The equilateral $E$-star graph for $E=8$.}
  \label{fig:n-star}
\end{figure}
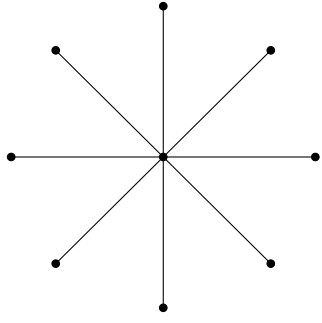
We impose Neumann boundary conditions at all vertices of degree one and a $\delta$ coupling condition of strength $\alpha = 1$ at the star vertex. If we parametrize the edges by $E$ copies of the interval $(0, l)$, with the end point $l$ corresponding to the star vertex, then the restriction $f_e$ of $f$ to any edge $e \in \cE$ is given by
\begin{align*}
 f_e (x) = A_e \cos (k x), \quad x \in (0, l),
\end{align*}
where $\lambda = k^2$ is the eigenvalue. Moreover, the vertex conditions at the star vertex can be phrased
\begin{align*}
 f_e (l)~\text{is independent of}~e \quad \text{and} \quad - \sum_{e \in \cE} f_e' (l) = f_e (l), \quad e \in \cE.
\end{align*}
By a simple computation, this gives rise to two cases, either
\begin{align*}
 k \in \frac{\pi}{2 l} \N
\end{align*}
or $k$ solves
\begin{align*}
 \frac{1}{k} = E \tan \left( \frac{k}{E} \right).
\end{align*}
As the right-hand side converges to $k$ as $E \to \infty$, there is a sequence of solutions $k_{E}$ of the above problem converging to $1$ as $E \to \infty$. Hence the lowest eigenvalue remains bounded. 
\end{example}

A trivial upper bound for the lowest eigenvalue of the Laplacian with $\delta$ coupling conditions may be obtained by inserting any constant function into the Rayleigh quotient:

\begin{proposition}\label{prop:deltaUpper}
Let $H$ be a Schr\"odinger operator on a compact metric graph $\Gamma$ with real-valued potential $q$ and a $\delta$ coupling condition with strength $\alpha_v$ at each vertex $v \in \cV$. Then
\begin{align*}
 \lambda_1 (H) \leq \frac{1}{L (\Gamma)} \left(\int_\Gamma q  + \sum_{v \in \cV} \alpha_v \right),
\end{align*}
where $L (\Gamma)$ denotes the total length of $\Gamma$.
\end{proposition}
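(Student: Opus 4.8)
The plan is to apply the min-max principle from Proposition \ref{prop:SAconditions} with the single test function $f \equiv 1$ on $\Gamma$. First I would observe that, since $\delta$ coupling conditions involve no Dirichlet part ($P_{v,\rm D} = \cQ$ but the constant vector lies in $\ran \cP = (\ran \cQ)^\perp$, so $P_{v,\rm D}F(v) = 0$ is automatic), the constant function $f \equiv 1$ belongs to $\dom h = \{g \in \widetilde H^1(\Gamma) : P_{v,\rm D}G(v) = 0 \text{ for all } v\}$. In particular $f$ is continuous at every vertex with $f(v) = 1$.

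Next I would evaluate the quadratic form. Using the explicit form of the vertex term for $\delta$ couplings from Lemma \ref{lem:forms}(ii), namely $\langle \Lambda_v P_{v,\rm R}F(v), P_{v,\rm R}F(v)\rangle = \alpha_v |f(v)|^2$, together with $f' \equiv 0$ and $f(v) = 1$, we get
\begin{align*}
 h(f) = \int_\Gamma |f'|^2 + \int_\Gamma q|f|^2 + \sum_{v\in\cV}\alpha_v|f(v)|^2 = \int_\Gamma q + \sum_{v\in\cV}\alpha_v.
\end{align*}
Moreover $\int_\Gamma |f|^2 = \int_\Gamma 1 = L(\Gamma)$.

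Finally, the variational characterization $\lambda_1(H) = \min_{0 \neq g \in \dom h} \frac{h(g)}{\int_\Gamma |g|^2}$ (the case $k=1$ of the min-max principle) applied to $g = f$ yields
\begin{align*}
 \lambda_1(H) \leq \frac{h(f)}{\int_\Gamma |f|^2} = \frac{1}{L(\Gamma)}\left(\int_\Gamma q + \sum_{v\in\cV}\alpha_v\right),
\end{align*}
which is the claim. I do not anticipate any genuine obstacle here: the statement is deliberately elementary, and the only point worth spelling out is the membership of the constant function in the form domain, which would fail in the presence of a Dirichlet part (e.g.\ for $\delta'$ couplings, whose form domain is the whole of $\widetilde H^1(\Gamma)$, or for formal Dirichlet conditions) but causes no trouble for pure $\delta$ conditions.
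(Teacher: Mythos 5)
Your proof is correct and is exactly the argument the paper intends: the text preceding the proposition states that the bound is obtained by inserting a constant function into the Rayleigh quotient, which is precisely your computation, including the key observation that the constant function lies in $\dom h$ because $P_{v,\rm D}=\cQ$ annihilates constant vectors. (Only your closing parenthetical is slightly off: for $\delta'$ couplings the constant function \emph{does} lie in the form domain since $P_{v,\rm D}=0$; the case where membership genuinely fails is the anti-Kirchhoff condition, where $P_{v,\rm D}=\cP$, or a true Dirichlet condition.)
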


Equality in the above estimate holds only if $q = 0$ and $\alpha_v = 0$ for all $v \in \cV$, the case in which $\lambda_1 (H) = 0$. However, in general the estimate may be rather rough, especially in the presence of large coupling coefficients: if $\alpha_v \to + \infty$ for all $v$, then $\lambda_1 (H)$ converges to the lowest eigenvalue of the Schrödinger operator with Dirichlet boundary conditions at each vertex -- in the potential-free case this is
\begin{align*}
 \frac{\pi^2}{L_{\max}^2} \leq \frac{E^2 \pi^2}{L (\Gamma)^2},
\end{align*}
where $L_{\max}$ is the largest edge length in $\Gamma$. However, the bound in Proposition \ref{prop:deltaUpper} tends to $+ \infty$ in this case.

The following proposition indicates that a careful study of the eigenvalues of flower graphs may give rise to better bounds.

\begin{figure}[htb]
  \centering
  \begin{tikzpicture}
    \draw[fill] (0,0) circle(0.05);
    \foreach \ph in {0,225}{
        \begin{scope}[rotate=\ph]
        \draw (0,0) to[out={-15}, in={180+90}] (2,0);
        \draw (0,0) to[out={+15}, in={180-90}] (2,0);
        \end{scope}
    }
    \foreach \ph in {90,180}{
        \begin{scope}[rotate=\ph]
        \draw (0,0) to[out={-15}, in={180+90}] (1.25,0);
        \draw (0,0) to[out={+15}, in={180-90}] (1.25,0);
        \end{scope}
    }
    \foreach \ph in {45,270}{
        \begin{scope}[rotate=\ph]
        \draw (0,0) to[out={-15}, in={180+90}] (1.5,0);
        \draw (0,0) to[out={+15}, in={180-90}] (1.5,0);
        \end{scope}
    }
    \foreach \ph in {135,315}{
        \begin{scope}[rotate=\ph]
        \draw (0,0) to[out={-15}, in={180+90}] (1.75,0);
        \draw (0,0) to[out={+15}, in={180-90}] (1.75,0);
        \end{scope}
    }
  \end{tikzpicture}
  \caption{A flower graph with $E=8$ edges.}
  \label{fig:Flower}
\end{figure}
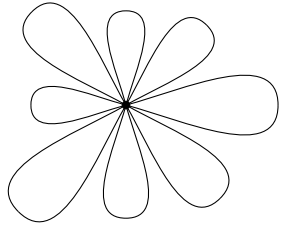

\begin{proposition}
Let $\Gamma$ be a compact metric graph with vertex set $\cV$, edge set $\cE$, and edge lengths $L (e)$, $e \in \cE$. Let $H$ be the Schrödinger operator on $\Gamma$ with real-valued potential $q$ and a $\delta$ coupling condition with strength $\alpha_v$ at each vertex $v \in \cV$. We assume that
\begin{align*}
 \alpha := \sum_{v \in \cV} \alpha_v > 0.
\end{align*}
Moreover, let $\widehat \Gamma$ be the flower graph on $E$ edges, see Figure \ref{fig:Flower}, with the same lengths as for $\Gamma$, and let $\widehat H$ be the Schrödinger operator on $\Gamma$ with the same potential $q$, transplanted to $\widehat \Gamma$ in the natural way, and a $\delta$ vertex condition with strength $\alpha$ at the only vertex. Then
\begin{align}\label{eq:estFlower}
 \lambda_1 (H) \leq \lambda_1 (\widehat H).
\end{align}
In particular, if $q = 0$ constantly on $\Gamma$, then $\lambda_1 (H)$ is bounded from above by $\lambda = k^2$, where $k$ is the smallest non-negative solution of the secular equation
\begin{align}\label{eq:secularFlower}
 2 k \sum_{e \in \cE} \tan \left( \frac{k L (e)}{2} \right) = \alpha.
\end{align}
\end{proposition}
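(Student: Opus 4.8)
The plan is to first reduce the geometric inequality \eqref{eq:estFlower} to an iterated application of Theorem \ref{thm:joining}, and then to compute $\lambda_1(\widehat H)$ essentially explicitly in the potential-free case.

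For \eqref{eq:estFlower} I would argue as follows. Enumerate the vertices of $\Gamma$ as $v_1,\dots,v_n$; if $n=1$ then $\Gamma$ is already a flower graph and there is nothing to prove. Apply Theorem \ref{thm:joining} once to join $v_1$ and $v_2$ into a single vertex, which by that theorem carries a $\delta$ condition of strength $\alpha_{v_1}+\alpha_{v_2}$; the resulting graph has the same edges, edge lengths and edge potentials as $\Gamma$ (every edge formerly incident to $v_1$ and $v_2$ having become a loop), and the associated Schr\"odinger operator $H^{(1)}$ satisfies $\lambda_k(H)\le\lambda_k(H^{(1)})$ for all $k$. Iterating this $n-1$ times merges all vertices into a single vertex whose $\delta$ strength is $\sum_{v\in\cV}\alpha_v=\alpha$, every original edge having become a loop attached to that vertex; the graph obtained is thus exactly the flower graph $\widehat\Gamma$, the edge potentials are the transplanted $q$, and the operator is $\widehat H$. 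Chaining the inequalities gives $\lambda_1(H)\le\lambda_1(\widehat H)$. Note that the hypothesis $\alpha>0$ is not needed for this step.

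For the second assertion, assume $q\equiv 0$ and analyze the spectrum of $\widehat H$. On each loop $e$, identified with $[0,L(e)]$, a solution of $-f_e''=k^2f_e$ with $k>0$ has the form $f_e(x)=a_e\cos(kx)+b_e\sin(kx)$; continuity at the single vertex $v$ forces $a_e=f(v)$ for all $e$, and the condition $f_e(L(e))=f(v)$ then determines $b_e=f(v)\tan(kL(e)/2)$ via the half-angle identity, provided $\sin(kL(e))\neq 0$. Using the convention that a loop contributes $f_e'(0)-f_e'(L(e))$ to $F'(v)$, a short computation (expanding and simplifying $b_e(1-\cos(kL(e)))/\sin(kL(e))+\sin(kL(e))$) yields $f_e'(0)-f_e'(L(e))=2kf(v)\tan(kL(e)/2)$, so for $f(v)\neq 0$ the $\delta$ condition $\sum_e\big(f_e'(0)-f_e'(L(e))\big)=\alpha f(v)$ is precisely \eqref{eq:secularFlower}. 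Since $\alpha>0$, the quadratic form of $\widehat H$ is strictly positive, hence $\lambda_1(\widehat H)>0$ and $k=0$ is not a solution of \eqref{eq:secularFlower}. Moreover the left-hand side of \eqref{eq:secularFlower} is continuous and strictly increasing on $(0,\pi/L_{\max})$ (each summand has positive derivative there), equals $0$ at $k=0$, and tends to $+\infty$ as $k\uparrow\pi/L_{\max}$ because $\tan(kL(e)/2)\to+\infty$ for the longest edge; hence there is exactly one solution $k$ in this interval, it is the smallest non-negative solution of \eqref{eq:secularFlower}, and $\sin(kL(e))>0$ for all $e$ along the way so all the above manipulations are legitimate. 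The function $f$ with $f(v)=1$ and $f_e(x)=\cos(kx)+\tan(kL(e)/2)\sin(kx)$ is then a genuine eigenfunction of $\widehat H$ with eigenvalue $k^2$, so $\lambda_1(\widehat H)\le k^2$, and combining with \eqref{eq:estFlower} gives $\lambda_1(H)\le k^2$. One can also note that this is sharp, i.e. $\lambda_1(\widehat H)=k^2$: the only other eigenfunctions are those with $f(v)=0$, which force $\sin(kL(e))=0$ and hence eigenvalue at least $\pi^2/L_{\max}^2>k^2$.

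I do not expect a genuine obstacle here. The substance of the result is entirely the observation that $\widehat\Gamma$ arises by collapsing all vertices of $\Gamma$, so that Theorem \ref{thm:joining} applies; the remaining work is the elementary secular-equation computation on the flower graph, where the only points requiring a little care are the derivative/orientation bookkeeping at a loop vertex (each loop appearing twice in the vertex data) and checking that the solution produced in $(0,\pi/L_{\max})$ really gives the ground state rather than a higher eigenvalue.
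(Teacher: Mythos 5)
Your proof is correct and follows essentially the same route as the paper: reduce to the flower graph by (iterated) application of Theorem \ref{thm:joining}, then derive the secular equation \eqref{eq:secularFlower} at the single vertex. The only cosmetic difference is that the paper parametrizes each loop symmetrically as $[-L(e)/2,L(e)/2]$ and invokes positivity of the ground state (Theorem \ref{thm:courant}) to reduce to pure cosines, whereas you carry out the equivalent half-angle computation directly and exhibit the resulting function as an explicit eigenfunction, which is slightly more self-contained.
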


\begin{proof}
The graph $\widehat \Gamma$ and the operator $\widehat H$ can be obtained from $\Gamma$ and $H$ by joining all vertices, and Theorem \ref{thm:joining} yields \eqref{eq:estFlower}. It remains to show that $\lambda_1 (\widehat H)$ corresponds to the lowest non-negative solution of \eqref{eq:secularFlower} in case $q = 0$ identically. Indeed, as $\lambda_1 (\widehat H) > 0$, the corresponding eigenfunction is a sin-type function, and it can be chosen positive on all of $\widehat \Gamma$ by Theorem \ref{thm:courant}. Moreover, its restriction $f_e$ to any edge $e$, parametrized as $[- L(e)/2, L(e)/2]$ has the same value at the end point $  L (e)/2$ as at $L (e)/2$. Therefore, 
\begin{align*}
 f_e (x) = A_e \cos (k x), \quad x \in \left[ -\frac{L (e)}{2}, \frac{L (e)}{2} \right],
\end{align*}
for each edge $e$. Plugging this into the $\delta$ vertex condition at the only vertex of $\widehat \Gamma$ yields \eqref{eq:secularFlower}.
\end{proof}

An analogous reasoning can be done for $\delta'$ vertex conditions if all coupling coefficients are negative, using Theorem \ref{thm:joining_delta'}~(ii). This results in the following statement.

\begin{proposition}
Let $\Gamma$ be a compact metric graph with vertex set $\cV$, edge set $\cE$, and edge lengths $L (e)$, $e \in \cE$. Let $H$ be the Schrödinger operator on $\Gamma$ with real-valued potential $q$ and a $\delta'$ coupling condition with strength $\beta_v < 0$ at each vertex $v \in \cV$. Let
\begin{align*}
 \beta := \sum_{v \in \cV} \beta_v.
\end{align*}
Moreover, let $\widehat \Gamma$ be the flower graph on $E$ edges with the same lengths as for $\Gamma$, and let $\widehat H$ be the Schrödinger operator on $\Gamma$ with the same potential $q$, transplanted to $\widehat \Gamma$ in the natural way, and a $\delta'$ vertex condition with strength $\beta$ at the only vertex. Then
\begin{align*}
 \lambda_1 (H) \leq \lambda_1 (\widehat H).
\end{align*}
\end{proposition}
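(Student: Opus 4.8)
The plan is to mimic exactly the proof of the preceding proposition about flower graphs with $\delta$ couplings, substituting Theorem~\ref{thm:joining_delta'}~(ii) for Theorem~\ref{thm:joining}. Concretely, the graph $\widehat\Gamma$ together with the operator $\widehat H$ is obtained from $\Gamma$ and $H$ by successively joining all vertices of $\Gamma$ into a single vertex, while transplanting the potential and keeping the edge lengths. At each such joining step we merge two vertices carrying $\delta'$ conditions with (negative) strengths into one vertex carrying a $\delta'$ condition whose strength is the sum; since the two strengths being joined are both negative, we are in the regime $\beta_{v_1},\beta_{v_2}<0$ covered by Theorem~\ref{thm:joining_delta'}~(ii), which gives $\lambda_k(H)\le\lambda_k(\widetilde H)$ at each step. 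Iterating over all vertices, the final strength at the unique vertex of $\widehat\Gamma$ is $\sum_{v\in\cV}\beta_v=\beta$, exactly as prescribed, and we conclude
\begin{align*}
 \lambda_1(H)\le\lambda_1(\widehat H).
\end{align*}

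The only point that needs a small remark is that Theorem~\ref{thm:joining_delta'} is stated for joining \emph{two} vertices, so one should phrase the argument as an induction on $|\cV|$: at every stage the partially collapsed graph still has $\delta'$ conditions of negative strength at every vertex (the arbitrary self-adjoint conditions allowed at the untouched vertices of Theorem~\ref{thm:joining_delta'} are here simply again negative-strength $\delta'$ conditions), and the running sum of the merged strengths stays negative throughout, so hypothesis~(ii) is applicable at each step. One should also note that joining may create parallel edges or loops, but this is harmless: Theorem~\ref{thm:joining_delta'} is formulated for arbitrary compact metric graphs and does not exclude multigraphs. This is genuinely the whole argument; there is no analogue of the secular-equation part of the previous proposition, because for $\delta'$ conditions with $\beta<0$ the ground state need not be sign-definite (cf.\ Example~\ref{ex:3star}), so one cannot reduce the eigenvalue on $\widehat\Gamma$ to a cosine ansatz in the same clean way.

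The main (and essentially only) obstacle is bookkeeping: making sure that the sign hypotheses of Theorem~\ref{thm:joining_delta'}~(ii) really are met at \emph{every} intermediate joining, which follows immediately from the fact that a sum of negative numbers is negative, so in fact there is no obstacle of substance. The proof is therefore short:

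\begin{proof}
The flower graph $\widehat\Gamma$ with operator $\widehat H$ arises from $\Gamma$ and $H$ by joining all vertices of $\cV$ into a single vertex, transplanting the potential $q$ and retaining all edge lengths. We perform the joining one pair of vertices at a time. At each step two vertices carrying $\delta'$ conditions of strengths $\beta_{v_1},\beta_{v_2}<0$ are merged into a vertex carrying a $\delta'$ condition of strength $\beta_{v_1}+\beta_{v_2}<0$, while all remaining vertices keep their $\delta'$ conditions of negative strength; hence Theorem~\ref{thm:joining_delta'}~(ii) applies and the eigenvalues do not decrease. Iterating over all vertices, the strength accumulated at the single remaining vertex equals $\sum_{v\in\cV}\beta_v=\beta$, so the resulting operator is $\widehat H$, and
\begin{align*}
 \lambda_1(H)\le\lambda_1(\widehat H)
\end{align*}
follows.
\end{proof}
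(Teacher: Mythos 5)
Your proof is correct and follows exactly the route the paper intends: the paper itself only sketches this proposition as "analogous reasoning... using Theorem \ref{thm:joining_delta'}~(ii)," i.e., iterated joining of the negative-strength $\delta'$ vertices into the single flower vertex. Your bookkeeping remarks (negativity of the running sum, applicability to multigraphs, absence of a secular-equation step) are all accurate and fill in the details the paper leaves implicit.
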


We conclude this section by formulating two open problems.

\begin{openproblem}
Prove an upper bound for the principal eigenvalue of a Schrödinger operator with $\delta$ vertex couplings which is sharp, e.g.\ in an asymptotic sense when the coupling coefficients tend to $\infty$.
\end{openproblem}

Note that \cite[Theorem 1.7]{BSS2022} as well as \cite[Proposition 7]{BK2022} provide upper bounds for the difference of the eigenvalues with $\delta$ coupling and continuity-Kirchhoff conditions. This difference grows linearly in the $\delta$ vertex coupling in case of~\cite[Theorem 1.7]{BSS2022}.

\begin{openproblem}
Prove sharp upper and lower bounds for the lowest eigenvalue of a Schrödinger operator with $\delta'$ vertex conditions depending on the total length of the graph and the coupling strengths.
\end{openproblem}

% \section{Open problems and conjectures}
% \label{sec:conjectures}
% 
% 
% \todo{Machen}
% 
% \begin{itemize}
%  \item Upper bounds for ground state EV of $\delta$ depending on total length and total coupling strengths
%  \item Upper and lower bounds of that type for $\delta'$
% \end{itemize}

\end{document}